\setlist{itemsep=7pt,topsep=7pt}
\def\thm@space@setup{%
  \thm@preskip=12pt plus 4pt minus 6pt
  \thm@postskip=15pt
}
   \def\MR#1{}
\theoremstyle{plain}
\newtheorem{theorem}{Theorem}[section]
\newtheorem*{theorem*}{Theorem}
\newtheorem{proposition}[theorem]{Proposition}
\newtheorem{lemma}[theorem]{Lemma}
\newtheorem{corollary}[theorem]{Corollary}
\newtheorem{conjecture}[theorem]{Conjecture}
\newtheorem{assumption}[theorem]{Assumption}
\newtheorem*{namedtheorem}{\theoremname}
\newcommand{\theoremname}{testing}
\newenvironment{named}[1]{\renewcommand{\theoremname}{#1}\begin{namedtheorem}}{\end{namedtheorem}}
\theoremstyle{definition}
\newtheorem{definition}[theorem]{Definition}
\newtheorem{question}[theorem]{Question}
\newtheorem{example}[theorem]{Example}
\newtheorem{remark}[theorem]{Remark}
\newtheorem*{fact*}{Fact}
\def\reals {\hbox {\rm {R \kern -2.8ex I}\kern 1.15ex}}
\def\integers {\hbox {\rm { Z \kern -2.8ex Z}\kern 1.15ex}}
\def\naturals {\hbox {\rm {N \kern -2.8ex I}\kern 1.20ex}}
\def\rationals {\hbox {\rm { Q \kern -2.2ex l}\kern 1.15ex}}
\def\hyp {\hbox {\rm {H \kern -2.7ex I}\kern 1.25ex}}
\def\from{\thinspace\colon}
\newcommand{\NN}{\mathcal{N}}
\newcommand{\calS}{\mathcal{S}}
\newcommand{\bdy}{\partial}
\newcommand{\ssm}{\smallsetminus}
\newcommand{\bea}{\begin{eqnarray*}}
\newcommand{\eea}{\end{eqnarray*}}
\newcommand{\abs}[1]{{\left\vert\, #1 \,\right\vert}}
\DeclareMathOperator{\id}{id}
\long\def\@savemarbox#1#2{\global\setbox#1\vtop{\hsize\marginparwidth 
\@parboxrestore\tiny\raggedright #2}}
\def\strutdepth{\dp\strutbox}
\def \ss{\strut\vadjust{\kern-\strutdepth \sss}}
\def \sss{\vtop to \strutdepth{
\baselineskip\strutdepth\vss\llap{$\diamondsuit\;\;$}\null}}
\def\strutdepth{\dp\strutbox}
\def \sst{\strut\vadjust{\kern-\strutdepth \ssss}}
\def \ssss{\vtop to \strutdepth{
\baselineskip\strutdepth\vss\llap{$\spadesuit\;\;$}\null}}
\def\strutdepth{\dp\strutbox}
\def \ssh{\strut\vadjust{\kern-\strutdepth  \sssh}}
\def  \sssh{\vtop to \strutdepth{
\baselineskip\strutdepth\vss\llap{$\heartsuit\;\;$}\null}}
\title{Rigidity of highly twisted plat diagrams}
\begin{document}

\author{Nir Lazarovich}

\author{Yoav Moriah}

\author{Tali Pinsky}

\author{Jessica S.\ Purcell}

\thanks{The authors would like to thank the Technion, Haifa for its
  support, and the Institute for Advanced Study (IAS), Princeton. 
  Lazarovich was partially supported by the Israel Science 
  Foundation (grant no. 1576/23). Purcell
  was partially supported by the Australian Research Council.}


\subjclass[2020]{Primary 57K45}

\keywords{knot diagrams, plats, twist regions, vertical spheres, 
canonical projections}

 \address{Department of Mathematics\\
 Technion\\
 Haifa, 32000 Israel}
 \email{lazarovich@technion.ac.il}

 \address{Department of Mathematics\\
 Technion\\
 Haifa, 32000 Israel}
 \email{ymoriah@technion.ac.il}

 \address{Department of Mathematics\\
 Technion\\
 Haifa, 32000 Israel}
 \email{talipi@technion.ac.il}

 \address{School of Mathematics\\
   Monash University, VIC 3800 Australia}
 \email{jessica.purcell@monash.edu}

\begin{abstract}
  In this paper we prove that if a knot or link has a
  sufficiently complicated plat projection, then that plat projection
  is unique. More precisely, if a knot or link has a $2m$-plat projection,
  where $m$ is at least four, and height at least two, and each twist 
  region of the plat contains at 
  least  four crossings, then such a projection is unique up to obvious 
  rotations. In particular, this projection gives a canonical form for 
  such knots and links, and thus provides a classification of
  these links.
\end{abstract}

\maketitle

\section{Introduction}\label{sec:Introduction}

One way of studying knots in $S^3$ is via their regular projections on
$2$-spheres in $S^3$.  Such projections are called \emph{knot
 diagrams}. Deciding when two diagrams correspond to the same knot is
a difficult problem, going all the way back to work of Tait in the
1870s. In 1926, K.~Reidemeister proved that any two regular
projections are equivalent by a sequence of \emph{Reidemeister moves};
see~\cite{Rei}. However, determining when two diagrams are equivalent
by Reidemeister moves is also a very difficult problem, and remains an
area of active research, for example see \cite{lackenby:reidemeister,
coward-lackenby}.

The earliest attempts to classify knots began in the 1870's by Tait,
using the \emph{crossing number} of a knot as the classifying
parameter. Recently, knots with up to 20 crossings have been classified, 
by Burton~\cite{Burton:19} and Thistlethwaite~\cite{Thistlethwaite:20}. 
That is, some 150 years later, distinguishing diagrams using crossing number is
possible only for knots with a small number of crossings.

Another way around the problem of deciding when diagrams are
equivalent would be to obtain ``canonical'' projections for knots. A
first attempt to do so was due to Schubert in 1956 see ~\cite{Schu}. 
All knots have $2m$-plat projections for some $m \in \mathbb{N}$.  
Schubert successfully classified all $4$-plats, which are more commonly known
as $2$-bridge knots and links. He showed that these
knots are classified by a pair of rational numbers, and that any continued
fraction expansion of these number correspond to a $4$-plat
projection. For a discussion of this see~\cite{BleilerMoriah}.

In a somewhat different flavor Menasco and Thistlethwaite proved
in 1993 that any two alternating projections of a knot
$\mathcal{K}\subset~S^3$ are equivalent by flype
moves~\cite{MT}. Flypes involve the existence of 4-punctured
spheres in the diagram, and they can be detected in a finite number of
steps. However, the sequence of such steps can be arbitrarily long.

In this paper, we prove a uniqueness statement for diagrams of an
infinite class of plats. We show that such plats have a canonical 
form which is unique, and can be read off the diagram immediately, 
without any need to consider equivalence relations such as Reidemeister 
moves or flypes. 

\begin{theorem}\label{thm:UniquePlat}
Let $K'$ and $K$ be two $4$-highly twisted plats 
representing the same knot or link $\mathcal{K} \subset S^3$, so that 
each diagram has width greater than or equal to $4$, and odd height greater 
than or  equal to $3$. Then $K=K'$ up to rotation in a 
vertical and/or a horizontal axis of the plats.
\end{theorem}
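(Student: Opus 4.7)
The plan is to identify a canonical system of essential $4$-punctured spheres in the link complement that can be read off any such plat diagram, and then to reconstruct the plat from this system. In a $2m$-plat, the vertical $2$-spheres separating adjacent columns of twist regions meet the link transversely in four points each, giving a collection $\Sigma$ of $4$-punctured spheres in $S^3 \setminus \mathcal{K}$. The first step is to prove that, under the $4$-highly twisted hypothesis with the given width and height bounds, $\Sigma$ is essential, meaning each component is incompressible and boundary-incompressible and no two components are parallel. A compressing or boundary-compressing disk for some $S_i \in \Sigma$ would be forced through an adjacent twist region, and the requirement of at least four crossings per twist region forces such a disk either to intersect the link or to yield a tangle with too few crossings, a contradiction. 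The width and height hypotheses ensure this argument is not obstructed near the top, bottom, or sides of the plat.

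The main step, and the principal obstacle, is to prove the converse: \emph{every} essential meridional $4$-punctured sphere $S \subset S^3 \setminus \mathcal{K}$ is ambient isotopic to some component of $\Sigma$. The strategy is to place $S$ in general position with $\Sigma$ and minimize $|S \cap \Sigma|$ among such positions. Intersection circles are removed by an innermost-disk argument using incompressibility of both surfaces. Arcs of $S \cap \Sigma$ on each $S_i$ have only finitely many combinatorial types; each outermost arc yields a disk $D$ whose interior lies in a tangle ball bounded by $\Sigma$ and whose boundary is an arc on $S$ together with an arc on $S_i$. Using the highly twisted hypothesis inside that tangle ball, one argues that $D$ realizes a boundary compression of $S$ that reduces $|S \cap \Sigma|$. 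After all such reductions, $S$ is disjoint from $\Sigma$ and lies in a single tangle ball, which contains only a column of rational tangles; there, no essential $4$-punctured sphere exists except those parallel to boundary components, so $S$ must be parallel to some $S_i$. I expect the bulk of the technical work to lie in this boundary-compression analysis, especially in edge cases involving the outermost columns or the topmost and bottommost twist regions, where the width and odd-height hypotheses are presumably indispensable.

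Given uniqueness of $\Sigma$ up to ambient isotopy, the theorem follows quickly. Applying the same analysis to $K'$ yields a vertical collection $\Sigma'$ which, by uniqueness, is ambient isotopic to $\Sigma$ rel $\mathcal{K}$. After this isotopy, $K$ and $K'$ share the same decomposing spheres, and each of the resulting tangle balls contains a column of twist regions realizing the same rational tangle. The classification of rational tangles determines each column's twist sequence uniquely up to the Klein four-group of symmetries of a marked $4$-punctured sphere. These local symmetries assemble consistently across the entire plat only when one applies a global rotation about the vertical or horizontal axis; any further ambiguity is obstructed by the parity condition on the odd height, which pins down the orientation of the top and bottom bridge spheres. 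Hence $K = K'$ up to rotation in a vertical and/or horizontal axis, as asserted.
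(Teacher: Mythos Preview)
Your proposal rests on a mistaken picture of the decomposing spheres. In a $2m$-plat of height $n$, any sphere that separates the diagram into a left piece and a right piece and runs from above the top bridges to below the bottom ones must cross the link once at each horizontal level, so it meets $\mathcal{K}$ in $n+1$ points, not four. The vertical spheres the paper actually uses are exactly these $(n+1)$-punctured objects. Four-punctured (Conway) spheres do exist in abundance in a highly twisted plat, but they are not the right invariant: the Conway sphere enclosing a single twist region bounds a rational tangle and hence is compressible on that side, while other essential $4$-punctured spheres can and do arise, so your claimed uniqueness statement for essential meridional $4$-punctured spheres is false as stated. Your assertion that a ``column of rational tangles'' contains no essential $4$-punctured sphere except boundary-parallel ones is also incorrect: there is an obvious Conway sphere between any two consecutive twist regions in such a column.

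Because the spheres have $n+1$ punctures, the heart of the argument is quite different from the innermost-disk/outermost-arc scheme you sketch. The paper first shows the vertical spheres are \emph{super-incompressible} (incompressible, boundary-incompressible, and pairwise incompressible), by a doubling trick that reduces to hyperbolicity of a related link. The converse---that any super-incompressible sphere with at most $n+1$ punctures is isotopic to a vertical one---is proved by a delicate layer-by-layer Euler characteristic count (this is where both the $4$-highly-twisted and $m\ge 4$ hypotheses are genuinely used) showing such a sphere can be isotoped off all twist regions. A maximal collection of vertical spheres then cuts the diagram into slabs each containing a \emph{single} twist region, not a column, and matching these up handles the interior. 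The regions outside the extreme vertical spheres each contain a whole column of twist regions forming a $2$-bridge tangle; these are dealt with separately using Schubert's classification and an analysis of bridge spheres and unknotting tunnels, a step your outline omits entirely.
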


Here, when we say two diagrams $K'$ and $K$ represent the same knot or link, 
we mean that there exists an ambient isotopy $\varphi\from (S^3,K') \to (S^3,K)$.

For precise definitions of \emph{plats}, and \emph{$4$-highly twisted}, 
see Section~\ref{sec:Preliminaries} and references~\cite{BuZi} and~\cite{JM}.  

\Cref{thm:UniquePlat} shows that for knots and links satisfying the required
conditions there is a canonical form, namely the $2m$-plat projection. This 
canonical form has other nice features, as it gives
information about incompressible surfaces in $S^3 \ssm \NN(\mathcal{K})$;
see~\cite{FM1},~\cite{FM2} and~\cite{Wu}. In many cases, it gives information 
about the fundamental group of the knot space $\pi_1(S^3 \ssm \NN(\mathcal{K}))$ 
and its rank, that is the minimal
number of generators of $\pi_1(S^3 \ssm \NN(\mathcal{K}))$; see~\cite{LM}. 
It also gives information about the bridge number for $\mathcal{K}$, 
about Heegaard splittings of $S^3 \ssm \NN(\mathcal{K})$ and about manifolds 
obtained by Dehn surgery on $K$; see ~\cite{LM} and ~\cite{Wu}. 

In addition, a knot or link $\mathcal{K}$ with at least $C$ crossings in each 
twist region, for appropriate $C$, is known to satisfy several nice 
geometric properties, regardless of having the plat projection required by
Theorem~\ref{thm:UniquePlat}. For example, if $C=3$, then $K$ is hyperbolic 
~\cite{LMP,futer-purcell}. If $C=6$, then all Dehn fillings of $K$ are 
hyperbolic~\cite{futer-purcell}. 
Closed embedded essential surfaces are known to be high genus~\cite{BFT}. 
If $C=7$, there are known explicit upper and lower bounds on the hyperbolic 
volume of $K$~\cite{FKP}. And if $C=116$, then the shape of the cusp 
of $\mathcal{K}$ is bounded~\cite{purcell:cusps}. 

A $2m$-plat diagram comes with a family of \emph{horizontal bridge spheres}, defined 
in \Cref{sec:Preliminaries}. Such a sphere separates the diagram into two trivial 
tangles. The braids associated with such trivial tangles were investigated by Hilden 
in 1975~\cite{Hilden}. Our results have applications to Hilden double cosets as in 
the following corollary. 

\begin{named}{\Cref{cor:HildenDoubleCosets}}
If $b$ and $b'$ are two $4$-highly twisted words in $\mathcal{X}_{2m}$ with the same 
Hilden double coset $\mathcal{H}b\mathcal{H} = \mathcal{H}b'\mathcal{H}$, then $b$ 
and $b'$ are the same up to a vertical and/or a horizontal rotation of angle $\pi$. 
\end{named}

We suspect that \Cref{thm:UniquePlat} can be improved, as follows.
\begin{conjecture}\label{conj: three also} The statement of Theorem 
\ref{thm:UniquePlat} holds for knots and links which have plat presentations
of width greater than or equal to $3$ and are $3$-highly twisted.
\end{conjecture}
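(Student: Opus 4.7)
The plan is to follow the overall architecture of the proof of \Cref{thm:UniquePlat}, but to sharpen each quantitative step so that it survives the reduction from width $4$ to width $3$ and from $4$-highly twisted to $3$-highly twisted. The main theorem presumably proceeds by (i) identifying a canonical family of essential surfaces in $S^3 \ssm \NN(\mathcal{K})$ coming from the plat structure (the horizontal bridge spheres and the vertical spheres between twist columns), (ii) showing this family is determined up to isotopy by $\mathcal{K}$, and (iii) reconstructing the combinatorial plat diagram from this isotopy class. Steps (i) and (iii) are formal and should transport with only cosmetic changes; the work is in step (ii), where the width and twist hypotheses are actually invoked.

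First I would revisit the incompressibility / essentiality arguments for the vertical and horizontal spheres. These arguments are typically driven by a combinatorial-area or Euler-characteristic count against an intersecting essential surface, where each crossing in a twist region contributes a discrete amount. At $4$-highly twisted, one has slack in the inequality; at $3$-highly twisted the inequality becomes borderline and a finite list of sharp cases must be ruled out by hand. I would enumerate these extremal intersection patterns with twist regions (saddle discs, bigons wrapping around a twist, etc.) and eliminate each using the $3$-highly twisted bound combined with the odd-height $\geq 3$ assumption, which forces at least two rows of twist regions and hence additional essential surface to cross.

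Second I would address the width $3$ obstruction. A $6$-plat has only two columns of interior twist regions, so the vertical spheres separating columns are adjacent, sharing boundary strands. The uniqueness proof in width $\geq 4$ likely exploits the ability to choose non-adjacent vertical spheres to get independent compressing-disc obstructions; in width $3$ this independence disappears. I would replace that step with a direct argument using the odd-height condition: since the plat has height $\geq 3$, the tangles on each side of any vertical sphere contain a full row of twists, which with $3$-highly twisted is enough to guarantee the tangles are nontrivial and the vertical sphere is essential. Once essentiality is established, the reconstruction of the diagram from the canonical essential surfaces proceeds exactly as in \Cref{thm:UniquePlat}, and the same rotational symmetries (vertical and horizontal $\pi$-rotations) are the only ambiguity.

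The hard part, I expect, will be the simultaneous weakening: either width $\geq 3$ alone or $3$-highly twisted alone should be accessible by local refinements of the existing proof, but combining them eliminates essentially all slack in the essential-surface argument. In particular, the width $3$, height exactly $3$, $3$-highly twisted corner case is where an essential surface could plausibly cross every twist region at a minimal essential rate, and where a bespoke analysis, perhaps exploiting the hyperbolicity guaranteed by \cite{LMP, futer-purcell} at $3$-highly twisted together with an explicit bound on the genus of a putative alternative essential sphere configuration, will likely be required. If this corner case resists a uniform treatment, a reasonable fallback is to prove the conjecture for height $\geq 5$ by the refined combinatorial argument and handle height $3$, width $3$ separately via a direct enumeration of $3$-highly twisted $6$-plats of height $3$ representing a fixed link.
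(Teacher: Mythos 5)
The statement you were asked to prove is \Cref{conj: three also}, which the paper explicitly leaves as a \emph{conjecture} --- the authors write only that they ``suspect'' Theorem~\ref{thm:UniquePlat} can be improved in this way, and offer no proof. So there is no paper argument to compare against, and your proposal, which you frame as a plan of attack rather than a finished argument, should be assessed on whether it actually closes the gaps. It does not.

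Your diagnosis of where the hypotheses enter is broadly correct: they are used quantitatively in \Cref{thm: notwistregions} via the per-layer Euler characteristic bound $\chi_i \leq -1$ of \Cref{inequality for inbetween}. To make this concrete: in \Cref{lem:Ocurve neg chi} the inequality chain arrives at $\chi_i^+ < \tfrac12 \max_j \{s_j(1 - (k_j - 2)(m-3))\}$. This is $\le -\tfrac12 s_j$ when $m\geq 4$ and $k_j\geq 4$, but the bound degenerates to a positive quantity at $m=3$ (where the factor $m-3$ vanishes) and is only nonstrict at $m=4$, $k_j=3$. Likewise in \Cref{lem:arc one interface twregion neg chi}, the step $(-\tfrac12 k_1 + 2)s_1 \leq 0$ holds precisely when $k_1\geq 4$. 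These are genuine breakdowns of the bookkeeping, not merely slack, and your proposal's call for a ``refined combinatorial argument'' is exactly the missing content. Your suggestion to ``enumerate extremal intersection patterns'' is the right instinct, but no such enumeration, nor a replacement potential function $\chi_+$, is supplied. The fallback of enumerating all $3$-highly twisted width-$3$, height-$3$ plats representing a given link is over an infinite family and is not a finite check. You also do not verify that the arguments of \Cref{sec:2bridge} (which lean on the leftmost and rightmost vertical spheres and on the unknotting-tunnel classification for $2$-bridge links) survive the collapse of the middle portion of the diagram at width $3$, where those two spheres become adjacent and \Cref{cor: middle part} carries little information. In short: you have sketched a reasonable research program for the conjecture, and correctly located the pressure points, but you have not produced a proof, and the conjecture remains open both in the paper and in your write-up.
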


It is also likely that the requirement that \emph{all} the coefficients are
$3$-highly twisted can be weakened as well. 
For example, the proof of \Cref{thm:UniquePlat} uses a result on essential 
surfaces originally proved for highly twisted plats by Finkelstein and 
Moriah~\cite{FM1, FM2}, which was later shown by Wu ~\cite{Wu} to remain 
true with fewer crossings in all but the outermost twist regions. We suspect 
\Cref{thm:UniquePlat} may generalise in this way as well. 

Going back to the question of crossing number, we ask the following:

\begin{question}\label{con:CrossingNumber}
Do knot and link diagrams satisfying the conditions of
Theorem~\ref{thm:UniquePlat} realize the crossing number of the
associated links?
\end{question}

For $2$-bridge knots the answer is negative since a 4-highly twisted diagram is not 
necessarily alternating and the crossing number is achieved by the alternating diagram 
\cite{kauffman1987state, murasugi1987jones,thistlethwaite1988kauffman}.

\subsection{Organisation}
In \Cref{sec:Preliminaries}, we review definitions of plats. In 
\Cref{Sec:VertSpheres}, we discuss a family of surfaces embedded in such plats, known 
to be essential. Section~\ref{sec:superincompespheres} puts these surfaces into 
standard position. Consequences are obtained in \Cref{sec:NoTwistRegions}. In 
\Cref{sec:Collections}, we use these surfaces and their images under isotopy to show 
that twist regions agree in two highly twisted plat diagrams. In \Cref{sec:2bridge}, 
we extend to twist regions on either end of the diagram. In Section~\ref{sec:Applications} 
we conclude the proof of Theorem ~\ref{thm:UniquePlat} and give applications to braids. 

\section{Highly twisted plat projections}\label{sec:Preliminaries}

Let $\mathcal{B}_{2m}$ denote the braid group on $2m$ strands and let 
$\sigma_1,\dots,\sigma_{2m-1}$ denote its $2m - 1$ standard 
generators. A word in the alphabet 
$\mathcal{X}_m = \{\sigma_1^{\pm 1},\dots,\sigma_{2m-1}^{\pm 1}\}$
determines a braid diagram with $2m$ strands (drawn from top to bottom) 
on a plane $P$, and an element in $\mathcal{B}_{2m}$.

\begin{definition}\label{def: highly twisted braid}
A word $b$ in $\mathcal{X}_m$ is in \emph{standard form of width $m$ and 
height $n$} if it is written as the concatenation of sub-words
$b_1\cdot b_2 \cdot \dots \cdot b_{n}$, 
where each $b_i$ has the following form:
\begin{enumerate}
\item When $i$ is odd, $b_i$ is a product of all $\sigma_j$ with $j$
  even. Namely:
\[b_i = \sigma_2^{-a_{i, 1}} \cdot \sigma_4^{-a_{i, 2}} \cdot \dots \cdot
\sigma_{2m-2}^{-a_{i, m-1}}\]

\item When $i$ is even, $b_i$ is a product of all $\sigma_j$ with $j$
  odd. Namely:
\[b_i = \sigma_1^{-a_{i, 1}} \cdot \sigma_3^{-a_{i, 2}} \cdot \dots \cdot
\sigma_{2m - 1}^{-a_{i, m}}\]
\end{enumerate}

For $c\ge 0$, the word $b$ in standard form is \emph{$c$-highly twisted} if 
$\abs{a_{i,j}}\geq c$ for all $i,j$.
\end{definition}

\begin{remark}
Note that the common conventions for the sign of twists in knot diagrams 
and braids differ. We prefer to work with the knot diagram convention. 
This is the reason for the negative exponents in \Cref{def: highly twisted braid}.
\end{remark}

\begin{example}\label{plat example}
Let $b$ be the the word in standard form of width $m=4$ and height $n=3$
and parameters 
\[ 
a_{i,j} = \begin{cases} 6 &(i,j)=(2,2)\\ -6 &(i,j)=(3,3)\\ -4 &\text{otherwise}\end{cases}
\]
Then the corresponding $4$-highly twisted braid diagram is shown in black in \Cref{fig:plat}. 
\end{example}

\begin{figure}
 \begin{overpic}[width=8cm]{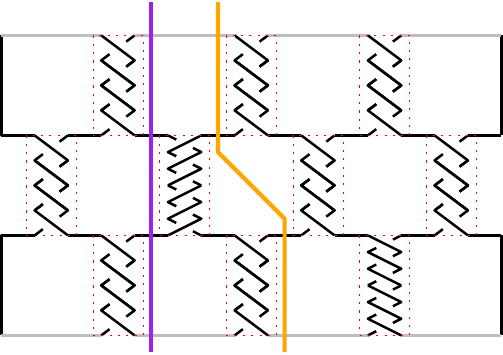}
        \put(12,50){$a_{1,1}$}
        \put(36,50){$a_{1,2}$}
        \put(64,50){$a_{1,2}$}
        \put(-3,30){$a_{2,1}$}
        \put(24,30){$a_{2,2}$}
        \put(50,30){$a_{2,3}$}
        \put(77,30){$a_{2,4}$}
        \put(11,10){$a_{3,1}$}
        \put(36,10){$a_{3,2}$}
        \put(64,10){$a_{3,3}$}
        \put(18,-5){\color{violet}$S(1,1,1)$}
        \put(47,-5){\color{orange}$S(1,2,2)$}
    \end{overpic}
    \bigskip

\caption{An 8-plat projection of a $4$-bridge link, and two vertical spheres.}
\label{fig:plat}
\end{figure}

\begin{definition}\label{def: plat closure}
Let $b$ be a word in $\mathcal{X}_m$. 
Let $x_1,\dots,x_{2m}$ (resp.\ $y_1,\dots,y_{2m}$) be the top (resp.\ bottom) 
endpoints of the strands in the braid diagram determined by $b$, ordered from left to 
right. Then, the \emph{plat closure of $b$} is a knot or link diagram obtained by 
connecting the pairs $\{x_1,x_2\},\dots,\{x_{2m-1},x_{2m}\}$ and the pairs 
$\{y_1,y_2\},\dots,\{y_{2m-1},y_{2m}\}$ by small unknotted disjoint arcs in the plane 
of projection $P$.
\end{definition}

In \Cref{fig:plat}, the arcs we add in order to form the plat closure 
are depicted in gray.

\begin{remark}
If two words describe the same braid in $\mathcal{B}_{2m}$, 
then their plat closures are diagrams of the same knot or link.
\end{remark}

It is well-known that every knot or link $\mathcal{K}$ has a diagram which 
is the plat closure of some word $b$ in $\mathcal{X}_m$ for some $m$, 
see \cite[p.~24]{BuZi}. Such a diagram is called a \emph{($2m$-)plat projection} 
of $\mathcal{K}$. The minimal number $m$ is called the \emph{bridge number} of 
$\mathcal{K}$.
 
Recall that an $m$-bridge sphere of a knot or link  $\mathcal{K}\subset~S^3$ 
is a $2$-sphere which meets $\mathcal{K}$ in $2m$ points and cuts
$(S^3,\mathcal{K})$ into two $2m$-string trivial tangles. For a $2m$-plat
projection, there is a well-known \emph{horizontal} $m$-bridge sphere, 
as follows: Arrange the maximum and minimum points of the projection 
to lie on horizontal lines at the top and bottom of the diagram, as in
Figure~\ref{fig:plat}. Any horizontal line in the projection plane lying 
below the maximum points, meeting the diagram in exactly $2m$ points, 
defines a bridge sphere: Connect the endpoints of the line by a  simple 
arc on the projection plane that does not meet $K$ to form a closed curve. 
Capping this curve by two disks, one above and one below the plane $P$, 
results in a  bridge sphere $\Sigma$. The key point is that a horizontal 
sphere obtained in this manner separates $K$ into two trivial tangles. 

\begin{definition}\label{def: highly twisted plat}
A plat projection  $K$ of a knot or a link $\mathcal{K}$ is in 
\emph{standard form of width $m$ and height $n$} 
if it is the plat closure of a word $b$ in standard 
form of width $m$ and height $n$ where $n$ is odd.
It is \emph{$c$-highly twisted} if $b$ is $c$-highly twisted.
\end{definition}

\begin{remark}\label{rem: even plats}
Note that a plat projection in standard form has height that is an odd number.
Analogues of this definition for even heights are given in 
\Cref{def: even and doubly even}.
\end{remark}

We remark that knots can be presented in  many different ways as plat 
projections, and that braids can be
presented in many different ways as words in $\mathcal{X}_m$ 
in standard forms. However, the main results of this paper show that under the 
additional assumption of being $4$-highly twisted these are unique, up to some 
obvious symmetries. Of course, not all knots and links admit $4$-highly twisted 
plat diagrams. Indeed,  any
such knot or link must be hyperbolic~\cite[Theorem~A]{LMP}. 

A \emph{twist region} in a knot diagram is a disk on the projection plane that 
contains a maximal chain of bigons describing a trivial integer 2-tangle.
In Figure~\ref{fig:plat} the twist regions are shown as red dotted rectangles, 
each containing the part of the diagram corresponding to the sub-word 
$\sigma_k^{-a_{i,j}}$ of $b$. The number $a_{i,j}$ is the (signed) number of crossings 
in a twist region. 

\section{Vertical spheres}\label{Sec:VertSpheres}

In this section we recall properties of a family of surfaces that 
were first defined in~\cite{FM2}; see also~\cite{Wu}. 

\begin{definition}\label{def:vertical-two-sphere} 
Let $K$ be a plat projection of a knot or link $\mathcal{K}\subset S^3$ in 
standard form of width $m$ and height $n$, where $m\geq 3$ and $n$ is odd.  
Let
$\alpha=\alpha(c_1,\dots,c_{n})$ be an arc running monotonically
from the top of the plat to the bottom so that $\alpha$ is disjoint 
from all twist regions, and there are $c_i$ twist regions to the left 
of the arc at the $i$-th row. We further require that there is at least 
one twist region on each side of $\alpha$ at each level 
(hence the requirement that $m\geq 3$), and that $\alpha$
intersects $K$ in precisely $n+1$ points. Now connect the endpoints of 
$\alpha$ by a simple arc $\beta$ lying in the projection plane $P$ 
and disjoint from $K$, 
and cap the simple closed curve $\gamma=\alpha\cup\beta$ by two disks,
one above $P$ and the other below $P$, to obtain a $2$-sphere
$S=S(c_1,\dots,c_{n})$. Any sphere isotopic to $S$ in $S^3\ssm K$ is called a
\emph{vertical $2$-sphere} and is also denoted by $S(c_1,\dots,c_{n})$. 
We always assume that the projection
plane and vertical spheres intersect transversally.

\end{definition}

In \Cref{fig:plat} the intersection of the vertical sphere 
$S(1,1,1)$ with the projection plane $P$ is shown in purple, and the 
intersection of the sphere $S(1,2,2)$ is shown in orange.

Wu proved that vertical 2-spheres are essential under 
mild conditions on the diagram~\cite{Wu}, extending work of Finkelstein and
Moriah~\cite{FM1}. We will use these results here. We also need the 
fact that vertical 2-spheres are pairwise incompressible, as follows. 

\begin{definition}
A surface $S\subset (S^3,K)$ is \emph{pairwise incompressible} if, 
for any disk $D$ with $D\cap S=\bdy D$ that meets $K$ in a single 
point, the curve $\bdy D\subset S$ also bounds a disk in $S$ meeting 
$K$ in one point. Otherwise, it is \emph{pairwise compressible} and 
$D\ssm K$ is called a \emph{pairwise compressing annulus}. 

A sphere is \emph{super-incompressible} if it is incompressible, 
boundary incompressible, and pairwise incompressible.
\end{definition}

\begin{theorem}\label{Thm:super incompressible}
Suppose a knot or link $\mathcal{K}\subset S^3$ has a $2m$-plat  projection 
that is $3$-highly twisted, and $m\geq 3$. Then any vertical 2-sphere 
$S=S(c_1, \dots, c_n)$ is super-incompressible in $S^3\ssm \NN(\mathcal{K})$. 
\end{theorem}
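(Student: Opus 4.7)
The plan is to verify the three defining properties of super-incompressibility in turn. Incompressibility and boundary-incompressibility of any vertical $2$-sphere follow directly from the essentiality of vertical $2$-spheres established by Wu~\cite{Wu}, which extends Finkelstein--Moriah~\cite{FM1,FM2} to the hypotheses at hand ($3$-highly twisted plats with $m \geq 3$). The bulk of the work is therefore the verification of pairwise incompressibility.

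Suppose toward contradiction that there exists a disk $D$ with $D \cap S = \partial D$, $|D \cap K| = 1$, and $\partial D$ not bounding a disk in $S$ that meets $K$ in exactly one point. The curve $\partial D$ separates $S$ into two disks $S_1, S_2$ with $|S_i \cap K| = k_i$ and $k_1 + k_2 = n + 1$. Each surgery sphere $\Sigma_i := S_i \cup D$ is a $2$-sphere meeting the $1$-manifold $K$ in $k_i + 1$ points, which must be even, so each $k_i$ is odd. The pairwise compression hypothesis excludes $k_i = 1$, forcing $k_1, k_2 \geq 3$. In particular the cases $n \in \{1, 3\}$ are vacuous, and we may assume $n \geq 5$.

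I then surger $S$ along $D$ to produce two $2$-spheres $\Sigma_1, \Sigma_2$, each meeting $K$ in an even number of points between $4$ and $n-1$. Each $\Sigma_i$ inherits essentiality from $S$ via standard innermost-disk arguments: any compressing or boundary-compressing disk for $\Sigma_i$ transfers to one for $S$, contradicting the first part of the proof. The strategy is then to put each $\Sigma_i$ into standard position relative to the plat projection and use the classification of essential punctured spheres in highly twisted plats (in the spirit of~\cite{FM2,Wu}) to identify $\Sigma_i$ with a vertical $2$-sphere $S(c_1^{(i)},\dots,c_n^{(i)})$. Reversing the surgery would then exhibit an arc of $K$ that is isotopic into $S$ across $D$, which in turn forces a twist region to contain fewer than three crossings, contradicting the $3$-highly twisted hypothesis.

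The principal obstacle is the standard-position step: the disk $D$ need not lie anywhere near the projection plane $P$, so placing $\Sigma_1, \Sigma_2$ in standard position requires a careful analysis of intersections with $P$ (via innermost and outermost arcs of $D \cap P$) or with auxiliary vertical spheres used as reference surfaces. The $3$-highly twisted hypothesis should enter precisely here, ruling out the remaining non-standard combinatorial configurations, in a manner analogous to the arguments in~\cite{FM2,Wu}.
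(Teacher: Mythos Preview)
Your proposal has two genuine gaps.

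First, the surgered spheres $\Sigma_i$ cannot be identified with vertical $2$-spheres as you suggest: by definition a vertical $2$-sphere meets $K$ in exactly $n+1$ points, whereas each $\Sigma_i$ meets $K$ in $k_i+1\le n-1$ points. So the proposed endgame (reversing the surgery after such an identification) cannot begin. If instead you intend the classification to say that every essential sphere with at most $n+1$ punctures is vertical---hence the $\Sigma_i$ simply cannot exist---then you are invoking a statement that is not in \cite{FM2,Wu}. Wu proves vertical spheres are essential, not that essential few-punctured spheres are vertical. The results in that direction appearing later in this paper (\Cref{thm: notwistregions} and \Cref{thm:Verticalspheres}) require super-incompressibility as a hypothesis and the stronger assumptions $m\ge 4$, $4$-highly twisted; using them here would be both circular and insufficient for the stated hypotheses $m\ge 3$, $3$-highly twisted.

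Second, the claim that ``any compressing or boundary-compressing disk for $\Sigma_i$ transfers to one for $S$'' is not justified. A compressing disk for $\Sigma_1$ may cross $D$ or hit $\Sigma_2$, and there is no a priori reason the surgered pieces remain essential. You yourself flag the standard-position step as ``the principal obstacle'' and leave it at the level of ``should enter precisely here''; as written this is a plan, not a proof.

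The paper's argument sidesteps all of this. Rather than analyzing the surgered pieces inside the original plat complement, it doubles the tangle on one side of $S$ along $S$. A compressing disk, boundary-compressing disk, or pairwise compressing annulus for $S$ doubles to an essential sphere, disk, or annulus in the doubled link complement. After obvious flypes the doubled diagram is prime, twist-reduced, and still $3$-highly twisted, hence hyperbolic by \cite{LMP}, which forbids any such essential surface. This handles all three properties uniformly, uses only the stated hypotheses, and avoids any appeal to a classification of essential spheres.
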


\begin{proof}
Wu showed that vertical spheres in plat projections are incompressible 
and boundary incompressible~\cite[Theorem~1]{Wu}. We give a new proof 
which shows pairwise incompressibility using~\cite{LMP}. 

Consider the vertical sphere $S=S(c_1,\dots,c_n)$, and assume in contradiction 
that $S$ has an essential compression disk, boundary compression disk, or 
pairwise compressing annulus, which we denote by $E$. Without loss of 
generality, assume $E$ lies on the left of $S$. Consider the tangle 
diagram to the left of $S$. Double this diagram along $S$ to produce 
a new $3$-highly twisted diagram. In the double, $E$ also doubles to 
become an essential sphere, disk, or annulus $D(E)$ embedded in the 
complement of the doubled diagram. On the other hand the double diagram 
can be turned twist-reduced by performing some obvious flypes: More precisely, 
it is twist-reduced unless $c_i=1$ for some even $i$;
in this case, perform flypes to obtain a twist reduced diagram. 
The flypes only merge or remove existing twist regions, hence the 
result remains $3$-highly twisted. Finally, the result of the flypes 
is prime, because the original diagram is prime and there is no 
nontrivial arc on the left of $S$ meeting the diagram at a single point.  

So the double has a prime, twist-reduced $3$-highly twisted diagram. 
By~\cite[Theorem~A]{LMP} it follows that the complement of the link described 
by the doubled diagram is hyperbolic. But a hyperbolic link cannot admit an 
essential sphere, disk, or annulus, contradicting the existence of $D(E)$. 
\end{proof}

\begin{theorem}\label{thm:Verticalspheres}
Let $K$ be a $2m$-plat corresponding to a knot or non-split link
$\mathcal{K}\subset S^3$. Assume $K$ has width $m\geq 3$ and height $n$. 
Let $S$ be an embedded $2$-sphere in the pair $(S^3,\mathcal{K})$ 
so that $S \ssm \NN(\mathcal{K})$ is super-incompressible, and $S$ meets 
the knot $\mathcal{K}$ at most $n+1$ times. If $S$ does not pass through twist 
regions then $S$ is isotopic to a vertical sphere. 

More generally, if $\mathcal{S}= \bigcup_{i=1}^k S_i$ is a disjoint 
union of such spheres, then $\mathcal{S}$ is isotopic to a disjoint 
collection of vertical 2-spheres. 
\end{theorem}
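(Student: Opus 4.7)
The plan is to isotope $S$ relative to the projection plane $P$ so that $|S\cap P|$ is minimal in $S^3\ssm\mathcal{K}$, use super-incompressibility to constrain what remains, and then read off the parameters $c_1,\ldots,c_n$ from $S\cap P$. First I would put $S$ and $P$ in general position, so that $S\cap P$ is a disjoint union of simple closed curves. If $\gamma$ is innermost on $P$ and bounds a disk $D\subset P$ with $|D\cap\mathcal{K}|=0$, then $D$ compresses $S$ in $S^3\ssm\mathcal{K}$, and incompressibility of $S$ together with irreducibility of $S^3$ lets me eliminate $\gamma$ by isotopy; if $|D\cap\mathcal{K}|=1$, then $D$ gives a pairwise compressing disk and pairwise incompressibility similarly reduces $|S\cap P|$. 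After this reduction, every innermost disk on $P$ bounded by $S\cap P$ meets $\mathcal{K}$ in at least two points.

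Next I would show that in fact $|S\cap P|=1$, by exploiting the bound $|S\cap\mathcal{K}|\leq n+1$. The curves of $S\cap P$ partition the sphere $S$ into regions, each contained entirely above or entirely below $P$; an innermost such region is a disk $D'\subset S$ whose boundary is innermost on $S$. Because $S$ avoids twist regions, so does $D'$, and by the triviality of the tangle on each side of $P$ and a symmetric version of the compressibility argument above, any innermost curve on $S$ that bounds a disk meeting $\mathcal{K}$ in $0$ or $1$ points again yields a reduction by exchanging $D'$ with a disk in $P$. Pairing innermost disks on the two sides of $P$ and summing their intersections with $\mathcal{K}$, the bound $n+1$ forces a single curve to remain. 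Once $\gamma=S\cap P$ is a single simple closed curve, it lies in $P$ away from twist regions, splits $S$ into two disks $S^\pm$ one in each ball above and below $P$, and by triviality of the tangles the disks $S^\pm$ are determined up to isotopy rel $\gamma$ by $\gamma$ together with $S\cap\mathcal{K}$. After isotoping $\gamma$ on $P$, in the complement of $\mathcal{K}$ and the twist regions, into the form $\alpha\cup\beta$ of \Cref{def:vertical-two-sphere} with $\alpha$ crossing $\mathcal{K}$ monotonically in $n+1$ points, I read off the parameters $c_i$ and identify $S$ with $S(c_1,\ldots,c_n)$.

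The main obstacle is the argument that $|S\cap P|=1$: the bound $|S\cap\mathcal{K}|\leq n+1$ is only barely enough, and I expect the careful counting to use both the innermost structure of intersection disks on $S$ and on $P$ simultaneously, together with the fact that each region of $P\ssm(S\cap P)$ contains a specific pattern of twist regions. For the disjoint-collection version $\mathcal{S}=\bigcup_{i=1}^{k}S_i$, I would perform all of the reductions above in parallel for the $S_i$, arranging that each isotopy used is supported in a ball disjoint from the other spheres in $\mathcal{S}$; this is possible because the compressing and pairwise compressing disks can be chosen innermost with respect to $\mathcal{S}$ as a whole, ensuring that no new intersections $S_i\cap S_j$ are introduced while each $S_i$ is brought into the vertical form.
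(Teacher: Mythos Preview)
Your overall strategy matches the paper's: reduce $S\cap P$ by incompressibility, show a single curve remains, then read off the vertical-sphere data. But the step you flag as ``the main obstacle''---forcing $|S\cap P|=1$---is where your argument is genuinely incomplete, and the counting you propose is both harder than necessary and not actually carried out. You do not need the bound $|S\cap\mathcal{K}|\le n+1$ here at all. Once every component of $S\cap P$ meets $K$, suppose two components $\gamma_1,\gamma_2$ survive. Choose a simple closed curve $\delta\subset S$ disjoint from $P$ that separates $\gamma_1$ from $\gamma_2$ on $S$. Then $\delta$ lies in one of the two $3$-balls $B^\pm$ bounded by $P$; since the arcs of $K$ in that ball are trivial, $\delta$ bounds a disk $\Delta\subset B^\pm\ssm K$. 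But $\delta$ separates points of $S\cap K$ lying on $\gamma_1$ from those on $\gamma_2$, so $\delta$ is essential on the punctured sphere $S\ssm\NN(K)$, and $\Delta$ is an honest compressing disk---contradicting incompressibility. This one-line topological argument replaces your entire counting scheme.

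Your final identification of $\gamma=S\cap P$ with a vertical sphere is also too quick. You assert that $\gamma$ can be isotoped to cross $K$ monotonically in exactly $n+1$ points, but you have not argued why $\gamma$ runs from the top of the plat to the bottom, nor why it has twist regions on both sides at every level. For the first: if $\gamma$ did not reach the top (or bottom), some horizontal bridge sphere would be disjoint from it, placing $S$ inside a trivial-tangle complement, which contains no super-incompressible surfaces. For the second: if at some level all twist regions were on one side of $\gamma$, there would be an evident compressing disk or pairwise compressing annulus on the other side. Together these force $\gamma$ to meet $K$ at least $n+1$ times; only now does the hypothesis $|S\cap K|\le n+1$ enter, giving equality and hence monotonicity. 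Your treatment of the disjoint-collection case is fine.
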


\begin{proof} 
Let $S \subset (S^3, \mathcal{K})$ be a $2$-sphere satisfying the hypotheses of the 
theorem. Let $P$ denote the projection plane corresponding to the plat. Then 
$S\cap P \neq \emptyset$, or else $S$ would bound a 3-ball in $S^3\ssm \NN(K)$,
contradicting the assumption that $S$ is incompressible. 
Assume that $S$ intersects $P$ transversely. Moreover, we may assume that 
each component of the collection of simple closed curves $S\cap P$ must intersect 
$K$, for otherwise, since $S$ does not pass through a twist region, such a curve 
would bound a disk on $P$. By the incompressibility of $S\ssm \NN(K)$, 
such a disk would also bound a disk on $S$. The union of the two disks would 
form a $2$-sphere which bounds a 3-ball as $S^3\ssm \NN(K)$ is irreducible.
This $3$-ball can be used to isotope $S$ through $P$, eliminating the curve 
of intersection. 

Suppose $S\cap P$ contains at least two components; denote two of them by
$\gamma_1$ and $\gamma_2$. These components contain intersection points 
of $S \cap K$. Let $\delta$ be a simple closed curve on $S$ disjoint from 
$P$ separating $\gamma_1$ and $\gamma_2$. Hence $\delta$ is contained in 
one of the two $3$-balls $B_1$ and $B_2$ constituting $S^3 \ssm P$, say 
$B_1$. As such it bounds a disk $\Delta \subset B_1$. Since the curve 
$\delta$ separates the intersection points of $S \cap K$ which are contained 
in $\gamma_1$ from those contained in $\gamma_2$, it is an essential 
curve in $S \ssm \NN(K)$, and $\Delta$ gives rise to a compressing disk 
for $S \ssm \NN(K)$, in contradiction. Hence $S\cap P$ contains a single 
component.

We now show that the component $S\cap P$ defines a vertical 2-sphere as in
\Cref{def:vertical-two-sphere}. Note it must run from the top of the diagram 
to the bottom, else there is a horizontal bridge sphere disjoint from $S\cap P$
that splits the link complement into two trivial tangles. However, there are no
super-incompressible surfaces in a trivial tangle, so this is impossible. 
We claim that at each level, there is at least one twist region on the inside 
and on the outside of $S\cap P$, for otherwise there would be an obvious
compression disk or pairwise compressing annulus. Since $S\cap P$ runs top to
bottom, with twist regions on each level on each side, it must meet $K$ at 
least $n+1$ times. Since by hypothesis it meets $K$ at most $n+1$ times, it must 
meet $K$ exactly $n+1$ times. It follows that up to isotopy the arc of $S\cap P$ 
running from top to bottom meeting $K$ must be monotonic. Then $S$ satisfies 
all the conditions in the definition of a vertical 2-sphere. 

Finally, note that all the above isotopies apply to isotope a disjoint 
union of 2-spheres to a disjoint union of vertical 2-spheres.
\end{proof}

\section{Isotoping super-incompressible spheres}\label{sec:superincompespheres}

In this section we isotope families of super-incompressible surfaces into 
a ``nice'' position with respect to a $2m$-plat diagram. 

\begin{definition}\label{def:P^_, P^+}
Let $\mathcal{K} \subset S^3$ be a knot with a $2m$-plat  projection $K$ on 
a plane $P$. Let $c$ denote a crossing in the knot diagram. A \emph{bubble} is 
a small $2$-sphere $S_c$ that intersects $P$ in an equator circle bounding a small 
disk $D_c \subset P$ that contains only the crossing $c$. The equator 
$\partial D_c$ divides  $S$ into two hemispheres $S_c^-$ and $S_c^+$.  
We may isotope $K$ to lie on $P$ except at the bubbles, where one arc 
of $K$ runs across the hemisphere $S_c^+$ and one runs across $S_c^-$. 

Let $P^-$ denote the plane obtained by removing the disks $D_c$ for all crossings
and attaching hemispheres $S^-_c$ along $\partial D_c$ instead. Similarly let $P^+$
denote the plane obtained from $P$ by attaching the hemispheres $S^+_c$ for
all crossings $c$. Both $P^+$ and $P^-$ bound $3$-balls in $S^3 \ssm K$.
\end{definition}

\begin{definition}\label{def:general position}
A surface $S \subset S^3 \ssm \NN(K)$, possibly with punctures corresponding to
meridional curves on $\NN(K)$, is said to be in \emph{general position} if:
\begin{enumerate}
\item $S$ intersects the planes  $P^\pm$ transversally.

\item For every twist region $T$, each component of the intersection of 
$S\cap T$ intersects $P^\pm$ as indicated in Figure~\ref{fig:general position},
left. That is, $S$ is disjoint from $K$ within each twist region, and forms a disk
between the two strands of $K$. 

\item In particular, for any crossing $c$, the bubble $S_c$ bounds a ball. The surface 
$S$ meets the ball bounded by $S_c$ only in \emph{saddles}, which are 
disks lying between the two strands of $K$, disjoint from $K$. See 
Figure \ref{fig:general position}, right.
\end{enumerate}
\end{definition}

\begin{figure}
    \centering
    \includegraphics[height = 3cm]{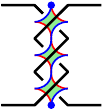}
    \hspace{3cm}
    \includegraphics[height=3cm]{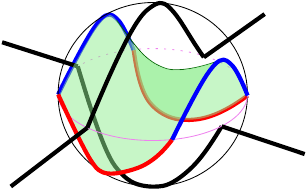}
    \caption{On the left, the intersection of a surface $S$ in general position 
    with a twist region. The intersection $S\cap P^+$ is shown in blue, and the 
    intersection $S\cap P^-$ is in red. On the right, a saddle in a bubble.}
    \label{fig:general position}
\end{figure}

The following lemma is proved in~\cite{LMP}, using methods similar to~\cite{MT}. 

\begin{lemma}
Suppose $K$ is has a 3-highly twisted plat projection. Then every 
surface $S \subset S^3 \ssm \NN(K)$ can be isotoped to be in general 
position.\qed
\end{lemma}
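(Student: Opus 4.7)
The plan is to follow the standard normal-form procedure of Menasco--Thistlethwaite~\cite{MT}, adapted to highly twisted plat diagrams as carried out in~\cite{LMP}. The proof proceeds in three stages: first a generic transversality step, then a surgery step to normalize the intersections of $S$ with each individual bubble, and finally a global exchange argument inside each twist region that uses the $3$-highly twisted hypothesis.

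First I would perturb $S$ so that it meets the piecewise-smooth spheres $P^+$ and $P^-$ transversally; this is generic. In particular $S$ then intersects each bubble sphere $S_c$ transversally in a finite, disjoint collection of simple closed curves lying in the four-punctured sphere $S_c\ssm K$.

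Next, I would simplify $S\cap S_c$ for each crossing $c$ by innermost-disk arguments. A curve of $S\cap S_c$ that bounds a disk on $S_c\ssm K$ also bounds a disk on $S$, using incompressibility of $S$ together with irreducibility of $S^3\ssm\NN(K)$, so it can be removed by a standard surgery. A curve on $S_c$ bounding a once-punctured disk containing a single intersection with $K$ can similarly be removed using the pairwise (meridional) incompressibility of $S$. After these reductions every remaining curve of $S\cap S_c$ is essential on the four-punctured sphere $S_c\ssm K$, and a short case analysis shows such a curve must separate the two strands of $K$ running through the bubble, hence cobounds a saddle disk between them as in Figure~\ref{fig:general position}, right. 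This establishes condition~(3), and also implies transversality of $S$ with $P^\pm$ in the complement of the twist regions.

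The final and most delicate step, which I expect to be the main obstacle, is to achieve condition~(2): inside each twist region $T$ the intersection of $S$ with $P^\pm$ must match the standard pattern of Figure~\ref{fig:general position}, left. A twist region is a chain of bubbles sharing strands, and each bubble already carries, by the previous step, some number of parallel saddles. Using the $3$-highly twisted hypothesis, an exchange argument in the spirit of the flype moves of~\cite{MT} allows one to push any saddle whose orientation disagrees with the standard model across an adjacent bubble, strictly decreasing the number of saddles in $T$ while preserving condition~(3). Iterating brings $S\cap T$ into the desired form. The hypothesis $c\ge 3$ is used precisely to guarantee that within each twist region there is enough room along the chain of bubbles to perform these exchanges without reintroducing trivial intersection curves elsewhere, and to rule out saddles running longitudinally across the whole of $T$. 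This bookkeeping is the technical heart of the argument in~\cite{LMP}, and is where I would expect to spend most of the effort.
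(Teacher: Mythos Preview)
The paper supplies no proof of this lemma: it is stated with a terminal \qed, and the sentence preceding it simply records that the result is proved in~\cite{LMP} by methods similar to~\cite{MT}. Your sketch is precisely an outline of that Menasco--Thistlethwaite normal-form procedure, so in approach you and the paper agree completely.

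One caution is worth recording. In your second stage you invoke incompressibility and pairwise (meridional) incompressibility of $S$ to run the innermost-disk surgeries on $S\cap S_c$, but the lemma as stated applies to \emph{every} surface $S\subset S^3\ssm\NN(K)$, with no essentiality hypothesis. In the paper's applications the lemma is used only for super-incompressible spheres (Proposition~\ref{removing unwanted intersections}), so your added hypotheses are harmless in practice; but as written your argument does not establish the statement in the generality claimed. Your account of where the $3$-highly twisted hypothesis enters in the third stage is also somewhat speculative---this is exactly the technical core that the paper defers to~\cite{LMP}, and the precise mechanism (and whether the bound $c\ge 3$ is genuinely needed for this step rather than for later ones) should be checked against that reference rather than reconstructed.
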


\begin{assumption}
    Up to isotopy, we may assume that the plat projection has the following form, 
shown in \Cref{fig:layers}: 
\begin{enumerate}
\item The strands of the link $K$ are either horizontal or vertical 
outside twist regions.
\item The vertical segments in $K$ appear only leftmost or rightmost segments.
\end{enumerate}
\end{assumption}

\begin{figure}[ht!]
    \centering
    \vspace{0.3cm}
    \begin{overpic}[height = 5cm]{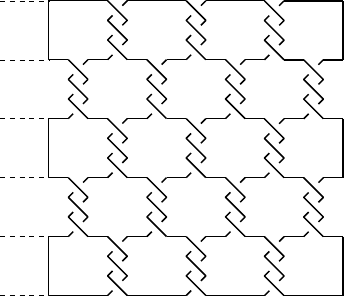}
        \put(2,90){$\ell_0$}
        \put(2,75){$\ell_1$}
        \put(2,57){$\ell_2$}
        \put(2,41){$\ell_3$}
        \put(2,23){$\ell_4$}
        \put(2,6){$\ell_5$}
        \put(2,-7){$\ell_6$}
    \end{overpic}
    \vspace{0.5cm}
   
    \caption{The decomposition of the projection plane into layers for an example with $n=5$.}
    \label{fig:layers}
\end{figure}

\begin{definition}
The projection plane is decomposed into $n+2$ horizontal regions called
\emph{layers}. The layer $\ell_0$ is a disk with an arc on its boundary 
meeting the top of the diagram of \Cref{fig:layers} (the full disk is 
not shown in that figure).  For each $i=1,\dots,n$, a layer $\ell_i$ is 
an annulus containing the $i$-th row of twist regions; one of its 
boundary components agrees with a boundary component of $\ell_{i-1}$.  
The layer  $\ell_{n+1}$ is a disk containing the region below the 
plat projection, and also the point at infinity. 
\end{definition}

\begin{definition}
Let $\ell_i$ be a layer. 
The intersection $S \cap P^\pm \cap \ell_i$ consists of curves and arcs. A curve is 
called an \emph{O-curve} if $\ell_i$ is an annulus and the curve goes around the 
annulus once. An arc is called a \emph{U-arc} if its two endpoints are on the same 
boundary of $\ell_i$; otherwise it is called an \emph{I-arc}. See \Cref{fig:arcs} for 
an example. A U-arc is an \emph{extremal U-arc} if it contains a global maximum or 
minimum of the simple closed curve in $S\cap P^\pm$ containing it, with respect 
to the height function given by projecting to the $y$-coordinate in the plane $P$.

An endpoint of an arc of $S \cap P^\pm \cap \ell_i$ may lie on $K$, or may run to a 
bubble meeting both $\ell_i$ and $\ell_{i\pm 1}$; we call the latter an 
\emph{extremal bubble}. In either case, we call this endpoint an \emph{interface} 
point. If an endpoint is not an interface point, then it meets the boundary of $
\ell_i$ outside the plat, and will be called a \emph{free} point.

A point of $S\cap K$ is called an \emph{intersection point} of $S\cap K$. 
\end{definition}

\begin{figure}[ht]
\centering
\includegraphics{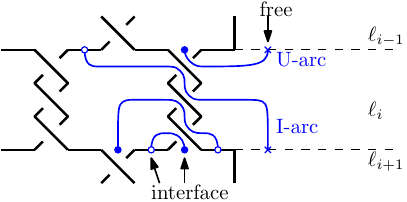}
\caption{An example of three U-arcs and one I-arc in $S\cap P^+ \cap \ell_i$. 
Free endpoints are marked with a cross, while interface endpoints are marked with 
round disks that are empty or full, corresponding to whether it is an intersection 
point or not, respectively.}
\label{fig:arcs}
\end{figure}

\begin{proposition}\label{removing unwanted intersections}
Let $K$ have a $2m$-plat, $m\geq 3$, of height $n$, 
and suppose further it is 3-highly twisted. Let $\calS$ be the union of a finite 
collection of disjoint super-incompressible spheres in $S^3\ssm \NN(K)$. Then 
up to isotopy, we may assume that for all $0\le i\le n+1$ and every component 
$S\subseteq  \calS$ the following hold:
\begin{enumerate}[label = ($\diamondsuit$\arabic*)]
\item\label{no vertical intersection}  $S\cap P^\pm$ does not intersect the 
vertical segments of $K$.

\item \label{menasco no scc} Every component of $S\cap P^\pm$ meets a twist region or 
an interface point. That is, there are no components that bound disks disjoint from 
$K$ in the diagram. 

\item\label{menasco assumptions} No component of $S\cap P^\pm$ visits a twist 
box more than once.

\item\label{no scc in layers} If a component of $S\cap P^\pm$ is contained in the 
layer $\ell_i$, then it is an O-curve (and $i$ is even).

\item\label{no trivial U-arcs} 
If a component of $S\cap P^\pm\cap \ell_i$, $1\le i\le n$, is a U-arc with at 
least one free endpoint, then it must pass through a 
twist region.

\item\label{no double intersection outside} 
No component of $S\cap P^\pm$ meets a layer in a pair of arcs that both have 
free endpoints. That is, for any curve $c$ of $S\cap P^\pm$, and among all 
arcs of $c\cap \ell_i$, at most one such arc has any free endpoint(s).
\end{enumerate}
\end{proposition}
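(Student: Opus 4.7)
The plan is to isotope $\calS$ to minimize a natural complexity, then show each of \ref{no vertical intersection}--\ref{no double intersection outside} holds at a minimizer. Starting from a general-position representative supplied by the preceding lemma, define
\[
c(\calS) = \bigl(|\calS\cap K|,\ |\calS\cap P^+|+|\calS\cap P^-|\bigr),
\]
where the second entry counts connected components of intersection and pairs are ordered lexicographically. Choose $\calS$ minimizing $c$ over all general-position isotopy representatives. Each reduction described below preserves general position and strictly decreases $c$, so at the chosen $\calS$ none of them can apply, yielding the six conclusions.

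The conditions \ref{no vertical intersection}, \ref{menasco no scc}, and \ref{no scc in layers} all come from trivial disks. If some component $\gamma$ of $S\cap P^\pm$ bounds a disk $D\subset P^\pm$ disjoint from $K$, then by the incompressibility given by \Cref{Thm:super incompressible} the curve $\gamma$ also bounds a disk $D'\subset S$, and $D\cup D'$ bounds a ball in $S^3$ whose interior misses $K$ (using irreducibility of $S^3\ssm\NN(K)$, which follows from hyperbolicity \cite{LMP}). The ball provides an isotopy removing $\gamma$ and lowers $c$. This gives \ref{menasco no scc} directly, and also \ref{no scc in layers}, since a component contained in a single layer that is not an O-curve bounds a disk in that layer away from $K$. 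For \ref{no vertical intersection}, an intersection of $S\cap P^\pm$ with a vertical segment of $K$ produces a small arc of $\gamma$ which, together with a subarc of $K$, bounds a disk that either realises a pairwise compressing disk --- forbidden by pairwise incompressibility --- or supplies an isotopy sliding the intersection off the vertical strand into an adjacent layer.

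The remaining three conditions use super-incompressibility together with Menasco-style outermost-arc arguments. For \ref{menasco assumptions}, if a component $\gamma$ of $S\cap P^\pm$ enters a twist region $T$ more than once, choose two subarcs of $\gamma\cap T$ that are outermost on $\gamma$; the subdisk of $S$ they cobound, together with an arc on $\partial T$, is a disk that either admits a compression or boundary-compression (contradicting super-incompressibility) or is trivial, in which case it produces an isotopy pushing $\gamma$ off $T$. Condition \ref{no trivial U-arcs} is similar: a U-arc with a free endpoint that avoids every twist region lies in a simply-connected region of $\ell_i\ssm K$, which together with an arc on $\partial \ell_i$ bounds a disk giving a trivial-disk reduction. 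For \ref{no double intersection outside}, two arcs of a curve $c$ in a single layer with all endpoints free are forced to be parallel in $\ell_i\ssm K$, and an innermost-arc analysis on $S$ between them produces a bigon allowing a complexity-reducing isotopy.

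The main obstacle is \ref{menasco assumptions}. Unlike the trivial-disk cases, handling multiple entries into a twist region requires careful bookkeeping of how $S$ interacts with the bubbles of crossings \emph{inside} $T$, because a naive outermost-disk argument can clash with the saddle requirement of general position. This is precisely where the $3$-highly-twisted hypothesis is used at full strength: the abundance of parallel bigons in $T$ gives enough room to route the reduction through the twist region without creating new intersections with neighbouring twist regions, and pairwise incompressibility is needed to absorb the pairwise compressing annuli that arise when $\gamma$ meets $K$ between two successive visits to $T$.
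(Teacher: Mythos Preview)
Your complexity function does not do the work you need it to do, and this is a genuine gap rather than a matter of detail. First, $|\calS\cap K|$ is the total number of punctures of the spheres in $\calS$; it is invariant under any isotopy of $\calS$ in $S^3\ssm K$, so the first coordinate of $c(\calS)$ never changes and contributes nothing. That leaves only the count of components of $\calS\cap P^\pm$, and this quantity does \emph{not} decrease under the key moves. For \ref{menasco assumptions} the relevant reduction is Menasco's band move through a saddle, which can split one curve of $\calS\cap P^\pm$ into two; the correct invariant to track there is the number of saddles, not the number of curves. For \ref{no double intersection outside} the paper's move is an explicit surgery that splits a component of $\calS\cap P^\pm$ into two, so your complexity strictly \emph{increases}. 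For \ref{no trivial U-arcs} with two free endpoints, pushing the U-arc across the obvious disk slides part of the curve into an adjacent layer rather than deleting a whole component, so again the component count need not drop; what decreases is the number of free endpoints. The paper handles all of this with a carefully ordered five-entry lexicographic complexity (vertical intersections, free endpoints, saddles, offending pairs for \ref{no double intersection outside}, trivial components), and the ordering is essential because later moves are allowed to worsen later entries.

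Two smaller points. Your treatment of \ref{no trivial U-arcs} only covers the case of two free endpoints; a U-arc with exactly one free endpoint and one interface endpoint does not cobound a disk in $\ell_i$ with an arc of $\partial\ell_i$, and a separate argument tracing into the neighbouring layers is needed (this is the isotopy in the paper's \Cref{fig:removing uarcs}). Finally, your diagnosis that \ref{menasco assumptions} is where the $3$-highly-twisted hypothesis enters is off: Menasco's argument for \ref{menasco assumptions} works for arbitrary diagrams with bubbles, and in this proposition the highly-twisted assumption is used only upstream, to put $\calS$ in general position in the first place.
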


\begin{proof}
Isotope $\calS$ to minimize the lexicographic complexity 
\begin{align*}
c(\calS)=(
&\#\{\text{intersections with vertical segments of }K\}, \\
&\#\{\text{free endpoints}\}, \\
&\#\{\text{saddles of $\calS$}\}, \\
&\#\{\text{pairs of arcs as in \ref{no double intersection outside}}\}, \\
&\#\{\text{components not satisfying \ref{menasco no scc}}\}
) 
\end{align*}

\ref{no vertical intersection}: If such an intersection exists, one can easily 
push $\calS$ upwards or downwards to remove it and reduce the complexity.

\ref{menasco no scc}: This is proved in Menasco \cite[Lemma 1]{menasco1984closed}; we 
repeat the proof here.
Let $c$ be an innermost curve in $\calS\cap P^\pm$ that does not meet a twist region 
or an intersection point. Let $D$ be the disk in $P^\pm$ with $\partial D = c$. Since 
$\calS$ is incompressible, there is a disk $D'$ in $\calS$ with $\partial D' = c$. 
Since $S^3 \ssm \NN(K)$ is irreducible, the disks $D\cup D'$ bound a 3-ball. Since $\calS$ is 
super-incompressible, no component of $\calS$ is contained in this 3-ball. Pushing 
$\calS$ through this 3-ball eliminates the curve $c$ and reduces the complexity. 

\ref{menasco assumptions}: If a component of $\calS\cap P^\pm$ visits a twist box more 
than once, then an innermost such component will meet a bubble more than once. By 
minimizing over the number of saddles, Menasco \cite[Lemma~1] {menasco1984closed} 
showed that no such intersections exist: If a component of $\calS\cap P^\pm$ meets 
a bubble twice, then there is an arc $\alpha$ of $\calS\cap P^\pm$ running away 
from the bubble, back to the bubble. If both its endpoints are on the same side of 
the knot in the bubble, the arc $\alpha$ cobounds a disk with an arc on the bubble. Push 
the disk slightly into the interior of the ball bounded by $P^\pm$, and use that disk 
to slide the arc of $\calS\cap P^\pm$ into the saddle and through. Menasco calls this 
a band move, and it reduces complexity; see 
also~\cite[Figure~14]{PurcellTsvietkovaI}. 
If it meets a bubble on the opposite side, then two innermost such arcs must be 
connected, and pairwise incompressible implies we may slide the surface off, again reducing 
number of saddles. We remark that both moves do not require the knot to be alternating.

\ref{no scc in layers}: Consider a component 
of $\calS\cap P^\pm$ contained in 
the layer $\ell_i$. By \ref{menasco assumptions}, it cannot meet a twist region since if 
it does it will have to meet it twice. It cannot have an intersection point with $K$, 
since those occur on the boundary of $\ell_i$. By \ref{menasco no scc}, it cannot bound a disk in a 
region.
Therefore, this component must be an $O$-curve.

\ref{no trivial U-arcs}: Suppose a U-arc has two free endpoints and does not meet a 
twist region. Then an innermost such arc bounds a disk with the 
boundary on $\ell_i$ that does not meet $\calS$ or $K$ in its interior. 
Pushing $\calS$ through this eliminates this curve, reducing the number of free 
endpoints, hence the complexity.

If a U-arc $\alpha \subset \calS\cap P^\pm \cap \ell_i$ has one free endpoint and does 
not meet a twist region, then necessarily $i$ is even. Assume $\alpha$ is innermost, 
and that the endpoints of $\alpha$ are on $\partial \ell_i \cap \partial \ell_{i+1}$. 
At the free endpoint of $\alpha$, it meets an arc $\beta$ in $\calS \cap P^\pm \cap \ell_{i+1}$.
The arc $\beta$ is situated between two vertical segments of $K$ and so both endpoints 
of $\beta$ are free. Thus by the previous paragraph it must be an I-arc. At its 
other endpoint, $\beta$ meets a curve $\gamma$ in $\ell_{i+2}$. Pushing $\calS$ as 
in \Cref{fig:removing uarcs} reduces the number of arcs.

\begin{figure}
    \centering
    \begin{overpic}[height=4cm]{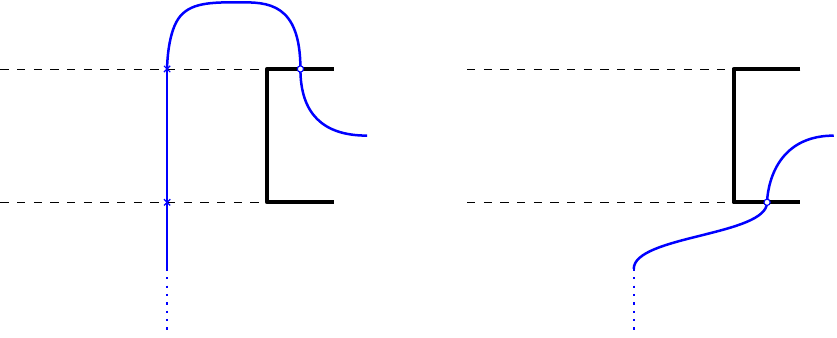}
        \put(14,35){$\color{blue}\alpha$}
        \put(14,22){$\color{blue}\beta$}
        \put(14,10){$\color{blue}\gamma$}
    \end{overpic}
    \caption{Removing U-arcs that do not pass through a twist region and 
    have one free endpoint.}
    \label{fig:removing uarcs}
\end{figure}

\ref{no double intersection outside}: Let $c$ be a curve of $\calS\cap P^\pm$ that meets the 
layer $\ell_i$ in a pair of arcs $\alpha_1$, $\alpha_2$ so that both have a free endpoint. 
Note that $\alpha_1$ and 
$\alpha_2$ could be I-arcs both on the same side of the plat (left or right), or I-arcs 
on opposite sides of the plat, with free endpoints either both at the top or both at the 
bottom, or one at the top and one at the bottom. Alternatively, $\alpha_1$ and $\alpha_2$ 
could be U-arcs.

In all cases, there exists an arc $\gamma$ in $\ell_i$, disjoint from $K$, such that 
the intersection $\gamma \cap c = \partial \gamma$ consists of the two free endpoints 
of $\alpha_1$ and $\alpha_2$. Note that $\gamma$ possibly meets pairs of other 
components of the collection $\calS$.

For an innermost subarc $\gamma'$ of $\gamma$ connecting two arcs belonging 
to the same component, there exists a disk $D$ that does not meet $\calS$ 
nor $K$ in its interior and  whose boundary is $\gamma'\cup\gamma''$ where 
$\gamma''$ is an arc on $\calS$. Push $\calS$ through $D$. This 
isotopy amounts to doing surgery to the component containing the pair of arcs 
along $\gamma$, splitting this component into two. Follow by an isotopy removing 
trivial components not meeting $K$ if such components have been created. This move 
either reduces or does not change the complexity.

After a finite number of successive such surgeries, $\gamma$ does not intersect 
$\calS$ in its interior. Proceed to perform surgery on $c$ along $\gamma$. After 
the surgery, the two arcs with free endpoints meeting $\gamma$ are eliminated, 
and two other arcs $\alpha_1',\alpha_2'$ appear. However, these arcs belong to 
different components. This reduces the number of pairs as in 
\ref{no double intersection outside}, and so reduces the complexity. 
\end{proof}

\section{Super-incompressible spheres and twist regions}
\label{sec:NoTwistRegions}

In this section we prove that super-incompressible spheres can always be 
isotoped to avoid twist regions. 

\begin{theorem}\label{thm: notwistregions}
Suppose $K$ is a $2m$-plat projection of a knot or a link of width 
$m\geq 4$ and height $n\geq 2$, and suppose the projection is $4$-highly twisted. 
Let $S$ be a super-incompressible sphere satisfying \ref{no vertical intersection}
-- \ref{no double intersection outside}, with at most  $n+1$ punctures. Then $S$ 
has exactly $n+1$ punctures and does  not pass through a twist region.
\end{theorem}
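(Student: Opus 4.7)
The plan is to argue by contradiction: suppose $S$ passes through some twist region $T$ with $|a_{i,j}| \geq 4$ crossings. By the general position assumption, each component $D$ of $S \cap T$ is a disk between the two strands of $K$, disjoint from $K$ in the interior of $T$, with boundary $\partial D$ on $\partial T \setminus K$. Each such disk passes through bubbles of $T$ via saddles, and at each saddle, by condition \ref{menasco assumptions}, the four equator arcs belong to four distinct curves of $S \cap P^\pm$. Thus many arcs of $S \cap P^\pm$ are forced to emerge from $T$ into the surrounding layers of the layer decomposition of \Cref{fig:layers}.

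I would then track these emerging arcs as they propagate into the neighbouring layers $\ell_{i-1}, \ell_i, \ell_{i+1}$ and beyond. Conditions \ref{menasco no scc}--\ref{no double intersection outside} rule out all trivial behavior: each such arc must either terminate at an intersection point with $K$ (producing a puncture of $S$) or proceed into another twist region, where the same structural constraints apply again. A careful case analysis, combining the number of arcs emerging from $T$ (bounded below using the $4$-highly twisted assumption of at least four crossings in $T$) with the forced propagation through each successive layer, should show that the punctures accumulated on $S$ strictly exceed $n+1$, contradicting the hypothesis. The analysis will split according to the topological type of $\partial D$ as a curve on the $4$-punctured sphere $\partial T \setminus K$ (separating the top strand-ends from the bottom, versus separating left from right, etc.), and one verifies in each case that the propagation cannot be absorbed within the puncture budget.

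Once we have ruled out $S$ passing through any twist region, we apply \Cref{thm:Verticalspheres}: since $S$ is super-incompressible with at most $n+1$ punctures and meets no twist regions, it is isotopic to a vertical $2$-sphere, which by \Cref{def:vertical-two-sphere} meets $K$ in exactly $n+1$ points, giving the second conclusion. The main obstacle is the combinatorial propagation argument of the second paragraph: one must track each arc of $S \cap P^\pm$ that emerges from $T$ through possibly many layers and subsequent twist regions, using each of the structural conditions \ref{no vertical intersection}--\ref{no double intersection outside} in turn to rule out ways the arcs could cheat and avoid contributing punctures. The hypotheses $m \geq 4$ and $n \geq 2$ provide enough horizontal and vertical room in the diagram for the propagation to reach its conclusion, while the strengthening from the $3$-highly twisted hypothesis used in \Cref{Thm:super incompressible} to the $4$-highly twisted hypothesis here should be exactly what is needed to push the puncture count strictly past $n+1$.
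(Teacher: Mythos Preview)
Your overall architecture is reasonable, and the final step (once $S$ avoids all twist regions, invoke \Cref{thm:Verticalspheres} to identify $S$ with a vertical sphere and hence count exactly $n+1$ punctures) is correct. The problem lies entirely in your middle paragraph, where the actual work is supposed to happen.

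The dichotomy you assert --- that each arc emerging from the twist region $T$ ``must either terminate at an intersection point with $K$ (producing a puncture of $S$) or proceed into another twist region'' --- is false. Arcs of $S\cap P^\pm$ can also have \emph{free} endpoints on $\partial\ell_i$ outside the plat (this is precisely why conditions \ref{no trivial U-arcs} and \ref{no double intersection outside} are needed, but they do not eliminate free endpoints entirely), and closed components can be O-curves that never terminate at all. So a naive arc-by-arc propagation does not directly produce punctures, and ``should show that the punctures accumulated on $S$ strictly exceed $n+1$'' is at present a hope rather than an argument. The case analysis you gesture at (splitting on the slope of $\partial D$ on the $4$-punctured sphere $\partial T$) does not obviously close, because arcs can leak out through free endpoints or be absorbed into O-curves without ever meeting $K$.

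The paper's proof replaces this intractable arc-tracking with a layer-by-layer Euler characteristic computation. One defines, for each layer $\ell_i$, a local quantity $\chi_i$ built from the numbers of O-curves, extremal U-arcs, internal saddles, and interface points in that layer, and proves $\chi(S)=\sum_i \chi_i$. The heart of the argument is then a sequence of lemmas establishing $\chi_i\le -1$ for each interior layer (strictly if $S$ meets a twist region there) and $\chi_0+\chi_1\le -\tfrac12$, $\chi_n+\chi_{n+1}\le -\tfrac12$ for the ends. Summing gives $2-p\le \chi(S)\le 1-n = 2-(n+1)$, forcing $p=n+1$ and equality in every layer, hence no twist regions. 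The Euler characteristic is exactly the device that converts the messy local arc combinatorics --- including free endpoints and O-curves --- into a single inequality, and the hypotheses $m\ge 4$ and $4$-highly twisted enter precisely in bounding $\chi_i$ when O-curves or free-ended arcs through $T_1$ or $T_m$ are present. Without this organizing principle, your propagation argument has no mechanism to control the leakage.
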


For the rest of this section, we will assume that $K$ and $S$ satisfy 
the hypotheses of \Cref{thm: notwistregions}.

\begin{definition}
For all $0\le i\le p$ define 
\begin{multline}\label{eq: definition of chi_i}
\chi_i =
\#\{\text{O-curves in }S\cap P^\pm \cap \ell_i\}- \#\{\text{internal saddles in }S\cap\ell_i\}\\
+ \tfrac12\#\{\text{extremal U-arcs in }S\cap P^\pm\cap\ell_i\}  - \tfrac12\{\text{interface 
points in }S\cap \bdy\ell_i\}
\end{multline}
\end{definition}

\begin{remark}
Note that the O-curves are counted separately in $P^+$ and $P^-$, and similarly for the 
extremal U-arcs. However, the number of internal saddles and interface points is 
independent of $P^\pm$. 
\end{remark}

\begin{lemma}\label{euler by layer}
The Euler characteristic of $S$ satisfies
\[ \chi(S)= \sum_{i=0}^{p}\chi_i \]
\end{lemma}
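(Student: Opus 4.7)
The plan is to equip $S$ with a CW structure coming from its intersection with $P^+\cup P^-$, compute $\chi(S)=V-E+F$, and then partition the counts by layer so that the contribution from each $\ell_i$ is precisely $\chi_i$.

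First I would set up the CW structure. The 2-cells are the connected components of $S\ssm(S\cap (P^+\cup P^-))$; by super-incompressibility together with the general position assumption, each such component is a disk---either contained in one of the two 3-balls bounded by $P^\pm$ in $S^3\ssm K$, or a saddle disk inside a bubble. The 1-cells are the arcs of $S\cap P^\pm$ between 0-cells, after placing an auxiliary vertex on each closed curve without natural 0-cells (in particular on each O-curve, turning it into a loop edge). The 0-cells are the intersection points $S\cap K$, the corners of saddles on bubble equators, and the auxiliary vertices. With this structure $\chi(S)=V-E+F$.

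Next I would partition each of $V$, $E$, $F$ by layer: a cell entirely contained in the slab over $\ell_i$ contributes fully to layer $i$, while a cell on a layer boundary (an interface-point vertex, an edge crossing the horizontal plane separating $\ell_i$ from $\ell_{i\pm1}$, or a saddle in an extremal bubble) contributes half to each of the two adjacent layers. With this bookkeeping one verifies that each feature in the definition of $\chi_i$ contributes with the prescribed weight. An O-curve in $\ell_i$ is innermost in its slab and bounds a disk in one of the balls bounded by $P^\pm$, contributing $+1$. An internal saddle is a 2-cell whose 4 corner vertices and 4 attaching edges all lie in $\ell_i$, contributing $-1$ after cancellation with its incident cells. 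An extremal U-arc contributes $+\tfrac{1}{2}$, since it marks the extremum of a curve whose adjacent face is shared with the neighboring layer. An interface point on $\partial\ell_i$ is a shared 0-cell and contributes $-\tfrac{1}{2}$.

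Summing the contributions of all layers, the $\tfrac{1}{2}$-terms pair up across shared layer boundaries and the total telescopes to $V-E+F=\chi(S)$. The main obstacle will be the careful bookkeeping in the second step: verifying that disks whose boundaries cross several layers are correctly apportioned so that, together with their boundary arcs, they yield exactly $+1$ per O-curve and $+\tfrac{1}{2}$ per extremal U-arc. A related subtlety is handling saddles in extremal bubbles, which must split between two layers and interact correctly with the interface-point count at their corners. Once this bookkeeping is done the identity $\chi(S)=\sum_i \chi_i$ follows by direct summation.
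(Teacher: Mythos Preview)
Your approach via a CW decomposition is different from the paper's, which uses Morse theory. The paper takes the distance from $P$ as a Morse function on $S$: its maxima and minima are in bijection with the simple closed curves of $S\cap P^+$ and $S\cap P^-$ respectively, its saddle points are exactly the saddles in the bubbles, and the punctures are the points of $S\cap K$. This gives directly
\[
\chi(S)=\#\{\text{closed curves in }S\cap P^\pm\}-\#\{\text{saddles}\}-\#\{\text{punctures}\}.
\]
Each closed curve is then either an O-curve in some $\ell_i$ (contributing $+1$ there) or attains its global max and min in two distinct layers (contributing $+\tfrac12$ as an extremal U-arc to each). Each saddle is either internal to one layer or sits in an extremal bubble and is counted as an interface point shared by two layers; each puncture is likewise an interface point. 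Summing yields $\sum_i\chi_i$ with no further bookkeeping.

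Your CW plan could be made to work, but the step you flag as ``the main obstacle'' is in fact the whole content, and your apportionment rule as written does not handle it. A disk of $S$ above $P^+$ whose boundary curve runs through several layers is neither ``entirely contained in the slab over $\ell_i$'' nor ``on a layer boundary,'' so neither of your two cases applies to it; you have not said how such a face contributes to any $\chi_i$. The only natural way to assign its $+1$ so as to recover the formula is to split it as $\tfrac12+\tfrac12$ to the layers containing the global max and min of its boundary curve --- which is precisely the paper's Morse-theoretic count, not a consequence of $V-E+F$ bookkeeping. Two smaller issues: an O-curve need not be innermost (nor does the argument require it), and the claim that an internal saddle contributes ``$-1$ after cancellation with its incident cells'' is not justified in your framework, since the incident vertices and edges are shared with other faces and cannot simply be absorbed into the saddle.
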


\begin{proof}
Consider the distance from the plane $P$ as a Morse function $h$ on the surface $S$. 
One can arrange it so that the minima of $h$ correspond to the disks in $S$ below 
$P^-$ bounded by simple closed curves in $S\cap P^-$. Similarly, its maxima 
correspond to simple closed curves in $S\cap P^+$. Its saddles correspond to 
the saddles in the bubbles. By Morse theory, 
\begin{multline}\label{eq: euler basic}
\chi(S) = \#\{\text{maxima}\} + \#\{\text{minima}\}  - \#\{\text{punctures}\}
- \#\{\text{saddle points}\} \\
=\#\{\text{simple closed curves in }S\cap P^\pm\} - \#\{\text{intersection points 
of }S\cap K\} \\-\#\{\text{saddles of }S\}
\end{multline}    

Each simple closed curve in $S\cap P^\pm$ is either contained in a layer, and is 
thus an O-curve, or obtains an extreme maximum and an extreme minimum with respect 
to the height on $P$ at two distinct layers, and thus 
\begin{multline}\label{eq: upper bound using minmax}
\#\{\text{simple closed curves in }S\cap P^\pm\} = \\
\sum_{i=0}^{p}
\left( \#\{\text{O-curves of }S\cap P^\pm \cap \ell_i\} + 
\tfrac12\#\{\text{extremal U-arcs of }S\cap P^\pm \cap \ell_i \} \right)
\end{multline}

Each internal saddle appears in one layer, and each interface point 
(i.e.\ external saddle or intersection point) is shared by two layers, and so
\begin{multline}\label{eq: upper bound of interface}
\#\{\text{intersection points of }S\cap K\} +\#\{\text{saddles of }S\} = \\
\sum_{i=0}^{p} \left(\#\{\text{internal saddles in }\ell_i\} + \tfrac12
\{\text{interface points on }\bdy\ell_i\}\right)
\end{multline}

The lemma now follows by combining \eqref{eq: euler basic},
\eqref{eq: upper bound using minmax} and \eqref{eq: upper bound of interface} and 
the definition of $\chi_i$.
\end{proof}

\begin{lemma}\label{Lem:Chi_i equivalence}
The value of $\chi_i$ is equivalent to the following:
\begin{multline}\label{eq: chi_i equivalent def}
\chi_i =
\#\{\text{O-curves in }S\cap P^\pm \cap \ell_i\}- \#\{\text{internal saddles in }S\cap\ell_i\}\\
- \tfrac12\#\{\text{non-extremal arcs in }S\cap P^\pm\cap\ell_i\}  + 
\tfrac12\{\text{free points in }S\cap \bdy\ell_i\}
\end{multline}
\end{lemma}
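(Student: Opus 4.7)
The plan is to reduce the lemma to a simple counting identity. Subtracting the right-hand side of \eqref{eq: chi_i equivalent def} from the right-hand side of \eqref{eq: definition of chi_i}, the $O_i$ and $I_i$ terms cancel and what remains is $\tfrac{1}{2}[(E_i + N_i) - (F_i + G_i)]$, where I abbreviate
\[
E_i = \#\{\text{extremal U-arcs in }S\cap P^\pm\cap\ell_i\}, \quad N_i = \#\{\text{non-extremal arcs in }S\cap P^\pm\cap\ell_i\},
\]
\[
F_i = \#\{\text{interface points in }S\cap \bdy\ell_i\}, \quad G_i = \#\{\text{free points in }S\cap \bdy\ell_i\}.
\]
It therefore suffices to establish
\[
E_i + N_i = F_i + G_i. \tag{$*$}
\]
The left side is the total number of arc components of $S\cap P^\pm\cap\ell_i$, counted across $S\cap P^+\cap\ell_i$ and $S\cap P^-\cap\ell_i$; the right side counts all points of $S\cap\bdy\ell_i$.

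I would prove $(*)$ by a double count of arc endpoints. On the one hand, every arc of $S\cap P^\pm\cap\ell_i$ has exactly two endpoints lying on $\bdy\ell_i$, so the total number of endpoints summed across $P^+$ and $P^-$ is $2(E_i+N_i)$. On the other hand, I would show that each point of $S\cap\bdy\ell_i$ is the endpoint of exactly one arc in $S\cap P^+\cap\ell_i$ and exactly one arc in $S\cap P^-\cap\ell_i$, giving a grand total of $2(F_i+G_i)$. Equating the two counts then yields $(*)$.

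For free points and intersection points the two-endpoint claim is immediate, since $P^+$ and $P^-$ both agree with $P$ in a neighborhood of such a point, and $S\cap P$ crosses $\bdy\ell_i$ transversely there. The main technical step, and the one I expect to be the hardest, is the case of an endpoint at an extremal bubble $B$. Here I would appeal to the local picture of a saddle in a bubble (\Cref{fig:general position}, right): the saddle disk $S\cap B$ meets the upper hemisphere of $B$ in two disjoint arcs whose four endpoints lie on the bubble equator, and similarly for the lower hemisphere. The curve $\bdy\ell_i$ cuts the equator into two arcs distributed between $\ell_i$ and the adjacent layer; a direct local check shows that exactly one of the two $P^+$-arcs through $B$ has its endpoint on the $\ell_i$-side of $\bdy\ell_i$ (the other lying in the adjacent layer), and symmetrically for $P^-$. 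This gives the required pair of arc endpoints in $\ell_i$ associated to the single interface point at $B$, and completes the verification of $(*)$ and hence the lemma.
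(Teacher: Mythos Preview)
Your proof is correct and follows essentially the same approach as the paper: both reduce the lemma to the identity $\#\{\text{arcs in }S\cap P^\pm\cap\ell_i\} = \#\{\text{interface points}\} + \#\{\text{free points}\}$, proved by double-counting arc endpoints and observing that each point of $S\cap\bdy\ell_i$ is an endpoint of exactly one arc in $P^+$ and one in $P^-$. The paper dispatches the endpoint-matching in a single sentence, whereas you spell out the extremal-bubble case explicitly; this extra care is not wrong, but the paper treats it as immediate from the general-position setup.
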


\begin{proof}
The total number of arcs of $\ell_i \cap P^{\pm} \cap S$ is equal to 
\begin{equation}\label{Eqn:number arcs}
\#\{\text{arcs in }\ell_i\cap P^{\pm}\} = \#\{\text{extremal arcs}\}+\#\{\text{non-extremal arcs}\}.
\end{equation}
Each arc has two endpoints.
The point at an endpoint of an arc is an endpoint of two arcs in $\ell_i$, one on $P^+$ and 
one on $P^-$. It is also either an interface endpoint or a free endpoint. Thus:
\begin{align*}
\#\{\text{arcs in }\ell_i\cap P^{\pm}\} &= 
\#\{p \in \bdy \ell_i \mid p \text{ an endpoint of an arc in }\ell_i\cap P^{\pm}\} \\
&= \#\{p \in \bdy\ell_i \mid p\text{ an interface point}\} + \#\{p \in \bdy\ell_i \mid p
\text{ a free point}\}.
\end{align*}
Combining the above with \cref{Eqn:number arcs} we get
\[\#\{\text{extremal arcs}\}+\#\{\text{non-extremal arcs}\} = 
\#\{\text{interface points}\} + \#\{\text{free points}\}\]
and the equivalence of \eqref{eq: definition of chi_i} and 
\eqref{eq: chi_i equivalent def} follows.
\end{proof}

\begin{definition}
Let $\alpha$ be a component of $S\cap P^\pm\cap \ell_i$. Define 
\begin{equation}
\chi_+(\alpha) = \begin{Bmatrix}
    1 & \text{if }\alpha\text{ is an O-curve}\\
    \tfrac12 & \text{if }\alpha\text{ is an extremal U-arc}\\
    0 & \text{otherwise}
\end{Bmatrix} - \tfrac14\#\{\text{interface endpoints of }\alpha\}\\
\end{equation}
\end{definition}

\begin{lemma}\label{Lem:ChiPlusFormula}
Equivalently,
\begin{equation}
    \chi_+(\alpha)=\begin{Bmatrix}
    1 & \text{if }\alpha\text{ is an O-curve}\\
    0 & \text{if }\alpha\text{ is an extremal U-arc}\\
    -\tfrac12 & \text{otherwise}
    \end{Bmatrix} + \tfrac14\#\{\text{free endpoints of }\alpha\}
\end{equation}
The only components $\alpha$ of $S\cap P^{\pm}\cap \ell_i$ with positive 
$\chi_+(\alpha)$ are O-curves or extremal U-arcs with at least one free endpoint.
\end{lemma}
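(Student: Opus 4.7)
The plan is to verify the two formulas are equivalent by a short case analysis on the type of component $\alpha$, using the fact that every arc has exactly two endpoints, each of which is either an interface endpoint or a free endpoint, while O-curves have no endpoints. Write $k$ for the number of interface endpoints of $\alpha$ and $f$ for the number of free endpoints. For an O-curve we have $k=f=0$; for any U-arc or I-arc we have $k+f=2$.

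I would then check the three relevant cases. If $\alpha$ is an O-curve, both expressions evaluate to $1$. If $\alpha$ is an extremal U-arc, the first formula gives $\tfrac12-\tfrac{k}{4}$ while the second gives $\tfrac{f}{4}=\tfrac{2-k}{4}=\tfrac12-\tfrac{k}{4}$. If $\alpha$ is any other component of $S\cap P^\pm\cap\ell_i$ — that is, a non-extremal U-arc or an I-arc (which is automatically non-extremal, since an I-arc joins distinct boundary components of $\ell_i$ and so cannot contain a global extremum of a simple closed curve in a single layer) — the first formula gives $-\tfrac{k}{4}$ while the second gives $-\tfrac12+\tfrac{f}{4}=-\tfrac12+\tfrac{2-k}{4}=-\tfrac{k}{4}$. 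This establishes the equivalence.

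For the positivity claim I would read off the sign directly from the second formula. O-curves contribute $\chi_+=1>0$. An extremal U-arc contributes $\chi_+=\tfrac{f}{4}$, which is positive precisely when $f\geq 1$, i.e.\ when it has at least one free endpoint. Any other component contributes $\chi_+=-\tfrac12+\tfrac{f}{4}\leq -\tfrac12+\tfrac{2}{4}=0$, and hence is never strictly positive. Combining these three bullets yields exactly the characterization stated in the lemma.

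Since both claims reduce to substitution and case-checking, there is no real obstacle; the only thing to be careful about is ensuring the case list is exhaustive (O-curve vs. extremal U-arc vs.\ non-extremal U-arc vs.\ I-arc) and that I-arcs correctly fall into the ``otherwise'' bucket in both formulas.
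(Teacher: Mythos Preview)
Your proof is correct and follows essentially the same approach as the paper: both rely on the identity $k+f=2$ for arcs to convert $-\tfrac14 k$ into $-\tfrac12+\tfrac14 f$. The paper carries this out in one line uniformly for all arcs rather than splitting into extremal versus non-extremal cases, and it leaves the positivity claim implicit, but the underlying argument is the same.
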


\begin{proof}
The formulae are identical (and positive) for $\alpha$ an O-curve. 
If $\alpha$ is an arc, then
\[ 2 =\#\{\text{endpoints of }\alpha\} = \#\{\text{free endpoints of }\alpha\} + 
\#\{\text{interface endpoints of }\alpha\} \]
Equivalently, 
\[-\tfrac14 \#\{\text{interface endpoints of }\alpha\} = -\tfrac12 
+ \tfrac14\#\{\text{free endpoints of }\alpha\} \qedhere
\]
\end{proof}

\begin{lemma}\label{euler of layer is sum over arcs}
The value of $\chi_i$ satisfies
    \begin{equation}\label{eq: euler by arc}
        \chi_i = \sum_{\alpha} \chi_+(\alpha) 
        \; -\;\#\{\text{internal saddles in }\ell_i\}
    \end{equation}
    where the sum runs over every component $\alpha$ of  $S\cap P^\pm \cap \ell_i$.
\end{lemma}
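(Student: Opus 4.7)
The plan is to compute $\sum_\alpha \chi_+(\alpha)$ directly from the definition, and compare the result to the formula for $\chi_i$ given in equation \eqref{eq: definition of chi_i}. Everything except the internal saddle count should appear naturally; the key combinatorial input is a double-counting identity for interface endpoints.

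First I would expand the sum using the original definition of $\chi_+(\alpha)$. Each O-curve contributes $1$, each extremal U-arc contributes $\tfrac12$, and every other component contributes $0$ to the first piece. The second piece contributes $-\tfrac14$ for each interface endpoint of $\alpha$, summed over all components. Hence
\[
\sum_{\alpha} \chi_+(\alpha) = \#\{\text{O-curves in }S\cap P^\pm\cap\ell_i\} + \tfrac12\#\{\text{extremal U-arcs in }S\cap P^\pm\cap\ell_i\} - \tfrac14 N,
\]
where $N$ is the total number of interface endpoints summed over all components $\alpha$ of $S\cap P^\pm\cap\ell_i$.

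The key step is to identify $N$. Each point $p\in S\cap\bdy\ell_i$ which is an interface point (i.e.\ lies either on $K$ or on an extremal bubble) is, by \Cref{def:general position} together with the description of arcs in $S\cap P^\pm\cap\ell_i$, the endpoint of exactly one arc in $S\cap P^+\cap \ell_i$ and exactly one arc in $S\cap P^-\cap\ell_i$. Indeed, a small neighborhood of $p$ in $S$ is a disk that meets $P^+$ in a single arc ending at $p$ and meets $P^-$ in a single arc ending at $p$ (at an intersection point of $S\cap K$ the surface crosses the knot transversally, while at an extremal bubble the saddle in the adjacent layer forces the same local picture). Since the sum $N$ counts each interface endpoint once for every arc it terminates, we obtain the double-counting identity
\[
N = 2\,\#\{\text{interface points in }S\cap\bdy\ell_i\}.
\]
Substituting this in,
\[
\sum_{\alpha}\chi_+(\alpha) = \#\{\text{O-curves in }S\cap P^\pm\cap\ell_i\} + \tfrac12\#\{\text{extremal U-arcs in }S\cap P^\pm\cap\ell_i\} - \tfrac12\#\{\text{interface points in }S\cap\bdy\ell_i\}.
\]

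Comparing with \eqref{eq: definition of chi_i}, the right-hand side differs from $\chi_i$ precisely by the term $-\#\{\text{internal saddles in }S\cap\ell_i\}$, which is exactly the missing summand in \eqref{eq: euler by arc}. Rearranging yields the claim. The only real subtlety is verifying the local picture at interface points (especially at extremal bubbles), but this is exactly the content of being in general position as in \Cref{def:general position}; nothing more is required.
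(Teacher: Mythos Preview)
Your proof is correct and follows exactly the same approach as the paper: the paper's one-sentence argument is precisely your double-counting identity that each interface point is an endpoint of two arcs (one in $P^+$ and one in $P^-$), so that counting $-\tfrac12$ per interface point is the same as counting $-\tfrac14$ per interface endpoint of an arc. Your version is simply more explicit.
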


\begin{proof}
Indeed, each interface point in $\ell_i$ is an endpoint of two arcs in $\ell_i$: one 
in $P^+$ and one in $P^-$. Equation \eqref{eq: euler by arc} is obtained from 
\eqref{eq: definition of chi_i} where instead of counting $(-\tfrac12)$ per 
interface point, we count $(-\tfrac14)$ per interface endpoint of an arc $\alpha$.
\end{proof}

\begin{lemma}\label{finding I arcs}\hfill
\begin{enumerate}[label=(\arabic*)]
\item\label{one twist region I-arcs} If $S \cap P^\pm \cap \ell_i$ meets the 
twist region $T$ but does not meet its neighboring twist regions, then there 
are at least two I-arcs through $T$. 
\item\label{two twist regions I-arcs} If $S \cap P^\pm \cap \ell_i$ meets two 
neighboring twist regions $T,T'$ but does not meet their neighboring twist 
regions, and there is at most one arc in $S\cap P^+$ connecting $T$ and $T'$, 
then there are at least two I-arcs through $T$ or $T'$.
\end{enumerate}
\end{lemma}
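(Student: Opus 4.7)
I will analyse the combinatorial structure of $S\cap P^\pm$ inside the twist region(s), starting from the local saddle pattern inside each bubble (depicted in \Cref{fig:general position}) and using the minimality conditions \ref{no vertical intersection}--\ref{no double intersection outside}. The strategy is to show that many arcs must pass through $T$ (respectively through $T$ and $T'$), and then to use the given constraints to force at least two of these arcs to be I-arcs.

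First, for each bubble of $T$, the saddle disk contributes two arcs on the upper hemisphere (in $S\cap P^+$) and two on the lower hemisphere (in $S\cap P^-$), whose four endpoints on the bubble equator are joined outside the bubbles by arcs of $S\cap P$: vertical connectors between consecutive bubbles of $T$, horizontal connectors to the left/right sides of $T$ at each bubble's height, and connectors above/below the extreme bubbles to the top/bottom of $T$. Concatenating these pieces according to the saddle pairings, I enumerate the arcs of $S\cap P^+\cap T$ and of $S\cap P^-\cap T$. I expect to find that at least one arc of $S\cap P^+\cap T$ exits the top of $T$, at least one exits the bottom, and similarly for $P^-$; hence there are at least two arcs of $S\cap P^\pm\cap T$ that exit the top of $T$ and two that exit the bottom.

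Next I study the full arcs of $S\cap P^\pm\cap\ell_i$ containing these four ``top/bottom-exit'' pieces. The other endpoint of each such full arc, obtained by extending outside $T$, must, by \ref{no vertical intersection} and the no-neighbour hypothesis, terminate on $\partial\ell_i$ or at an extremal bubble of $T$. A full arc whose extension ends at an extremal bubble, or at the boundary component of $\ell_i$ opposite to the one containing its top/bottom-of-$T$ endpoint, is then an I-arc through $T$ by the definition of an I-arc.

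To finish part (1) I argue by contradiction. If at most one of these four arcs were an I-arc, then at least three would be U-arcs with both endpoints free on the same component of $\partial\ell_i$. By \ref{no double intersection outside} each such U-arc lies on a distinct component of $S\cap P^\pm$. Using \Cref{euler by layer} together with \Cref{euler of layer is sum over arcs}, these many distinct simple closed curves combined with the $c$ internal saddles of $T$ force the layer-by-layer Euler characteristic to exceed the budget available to a sphere with at most $n+1$ punctures, contradicting the super-incompressibility of $S$. Part (2) follows along the same lines, with the hypothesis that at most one arc on $P^+$ connects $T$ and $T'$ playing the role of restricting the side-exit combinatorics. The main difficulty will be making the Euler-characteristic contradiction precise and carefully handling the degenerate cases where extensions terminate at extremal bubbles of $T$, or (in part (2)) where side-exits of $T$ connect through $T'$; these cases must be ruled out by the combined force of \ref{menasco no scc}--\ref{no double intersection outside} before the final count goes through.
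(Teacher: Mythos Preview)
Your plan has a genuine circularity problem. You propose to derive a contradiction from the Euler-characteristic ``budget'' for a sphere with at most $n+1$ punctures, but the layerwise bounds $\chi_i\le -1$ that make that budget argument work (specifically \Cref{lem:arc one interface twregion neg chi}, Cases~2.B and~2.C) themselves invoke \Cref{finding I arcs}. So you cannot use ``exceeding the budget'' here without first proving this lemma by independent means. Relatedly, your step ``if at most one of these four arcs were an I-arc, then at least three would be U-arcs with both endpoints free'' is unjustified: an arc through $T$ that fails to be an I-arc can perfectly well have one or two interface endpoints, so \ref{no double intersection outside} does not force them onto distinct components as you claim.

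The paper's argument is entirely local and combinatorial, avoiding any global Euler count. For part~(1): fix a single component of $S\cap T$; if $T$ has $k$ crossings then this component produces $k+1$ arcs of $S\cap P^+\cap\ell_i$ and $k+1$ of $S\cap P^-\cap\ell_i$ through $T$, with endpoints $a_0,\dots,a_{k+1}$ on the left of $T$ and $b_0,\dots,b_{k+1}$ on the right (so that $\alpha_i^+$ runs from $a_{i+1}$ to $b_i$ and $\alpha_i^-$ from $a_i$ to $b_{i+1}$). Since neighbouring twist regions are not met, each $a_j$ and each $b_j$ lies on $\partial\ell_i$, and there are cut-off indices $i_0,j_0$ with $a_j$ on the bottom boundary iff $j<i_0$ and $b_j$ on the bottom boundary iff $j<j_0$. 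A three-case split on the sign of $i_0-j_0$ then exhibits two explicit I-arcs (e.g.\ $\alpha_{i_0}^+,\alpha_{i_0+1}^+$ when $i_0<j_0$). Part~(2) is even shorter: if $\alpha$ is the unique $P^+$-arc joining $T$ to $T'$, the arc $\beta$ on the opposite side of the relevant bubble of $T$ is forced to be an I-arc missing $T'$, and symmetrically for $T'$. No Euler characteristic enters at all.
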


\begin{proof}
First note that in both cases there are no O-curves, as such curves must meet all twist 
regions, and $m\ge 4$ implies there are at least three twist  regions in each row.
    
\ref{one twist region I-arcs}: Consider only the arcs of $S\cap P^\pm \cap \ell_i$ 
that belong to a single component of $S\cap T$. If $T$ has $k$ crossings in it, 
then there are $k+1$ arcs meeting $T$ in $P^+$, and $k+1$ arcs meeting $T$ in $P^-$. 
Let $\alpha^+_0,\dots,\alpha^+_k$ (resp.\ $\alpha^-_0,\dots,\alpha^-_k$) be the 
arcs in $P^+$ (resp.\ $P^-$) meeting $T$, ordered from bottom to top on $T$. Without 
loss of generality, we may assume that $T$ is a negative braid. This means 
that the arcs $\alpha_i^+$, after entering $T$ from the left, travel downwards 
in $T$ before leaving from the right side of $T$, as in \Cref{fig:one twist region Iarcs}.

Let $a_0,\dots,a_{k+1}$ and $b_0,\dots,b_{k+1}$ be the endpoints of the arcs 
$\alpha_i^\pm$, ordered as in \Cref{fig:one twist region Iarcs}. 
That is, label so that $\alpha_i^+$ has endpoints $a_{i+1}$ on the left and $b_i$ on the 
right, $i=0, \dots, k$. Set $a_0=b_0$ and $a_{k+1}=b_{k+1}$. It follows that $\alpha_i^-$ 
has endpoints $a_i$ and $b_{i+1}$. 
There exists 
some $1\le i_0\le k+1$ such that $a_j$ is on the bottom boundary of $\ell_i$ if 
and only if $j< i_0$. Similarly, there exists $1\le j_0\le k+1$, such that $b_j$ 
is on the bottom boundary of $\ell_i$ if and only if $j<j_0$.

\begin{figure}
\centering
\includegraphics{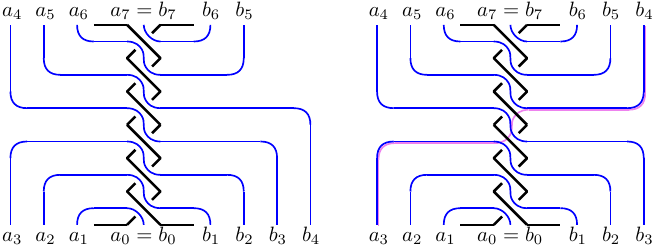}
\caption{A twist region with 6 crossings and the arcs that pass through it. 
On the left $i_0 =4 < j_0 = 5$, and on the right $i_0 =j_0 =4$.}
\label{fig:one twist region Iarcs}
\end{figure}
    
If $i_0<j_0$ then $\alpha^+_{i_0},\alpha^+_{i_0+1}$ are I-arcs, for example as in 
\Cref{fig:one twist region Iarcs} left.  
Similarly, if $i_0>j_0$ then $\alpha^-_{j_0},\alpha^-_{j_0+1}$ are I-arcs. 
If $i_0=j_0$, then $\alpha^+_{i_0},\alpha^-_{i_0}$ are I-arcs, for example as in 
\Cref{fig:one twist region Iarcs}, right. 

\ref{two twist regions I-arcs}:  
Let $\alpha$ be the arc connecting the two twist boxes $T,T'$. Let $\beta$ be the 
arc meeting $T$ opposite to $\alpha$ as in \Cref{fig:two_twist_case}. Up to 
reflections, we may assume that the twist region is a negative braid, and that the arc 
$\beta$ is in $P^+$ and connects to the top component of $\partial \ell_i$ as in the figure. 
It follows that $\beta$ belongs to an I-arc that does not meet $T'$ (see the figure).
A similar argument produces an I-arc meeting only $T'$.
\begin{figure}
    \centering
    \includegraphics{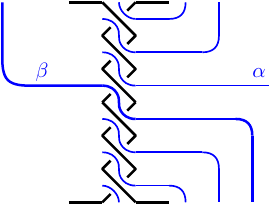}
    \caption{The arcs $\alpha,\beta$.}
    \label{fig:two_twist_case}
\end{figure}

\end{proof}

The following proposition is key to the proof of \Cref{thm: notwistregions}. 
Its proof will be spread over Lemmas~\ref{lem:Ocurve neg chi} through 
\ref{lem:arc one interface neg chi}. 

\begin{proposition}\label{inequality for inbetween}
Assuming $m\geq 4$ and 4-highly twisted, for all $1\le i \le n$, we have $\chi_i\le -1$. 
Moreover, if $S$ passes through 
a twist region in $\ell_i$ then the inequality is strict.
\end{proposition}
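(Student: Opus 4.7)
The proof will be a careful layer-by-layer counting argument. Using the formula of Lemma \ref{euler of layer is sum over arcs},
\[ \chi_i = \sum_\alpha \chi_+(\alpha) \;-\; \#\{\text{internal saddles in }\ell_i\}, \]
together with the characterisation of $\chi_+$ in Lemma \ref{Lem:ChiPlusFormula}, only O-curves (contributing $+1$) and extremal U-arcs with at least one free endpoint (contributing $+\tfrac14$ or $+\tfrac12$) give positive contributions; I-arcs and non-extremal U-arcs with all-interface endpoints contribute $-\tfrac12$. The plan is to show that the positive contributions in $\ell_i$ are always outweighed by at least $1$, and by at least $\tfrac32$ whenever $S$ passes through a twist region of $\ell_i$.

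Before the case analysis I would dispose of the possibility $S\cap \ell_i=\emptyset$: such a sphere would lie on one side of the horizontal bridge sphere separating $\ell_i$ from one of its neighbours, hence inside a trivial tangle, contradicting super-incompressibility as in the proof of Theorem \ref{thm:Verticalspheres}. So $S\cap \ell_i\ne \emptyset$, and the structural analysis can proceed.

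I then split into two cases. \emph{Case A:} $S$ does not pass through any twist region in $\ell_i$. Property \ref{menasco no scc} forbids O-curves in $\ell_i$ (an O-curve has no interface endpoints and so must meet a twist region), and \ref{no trivial U-arcs} forbids U-arcs with free endpoints; thus every component of $S\cap P^\pm\cap \ell_i$ has $\chi_+(\alpha)\le 0$. A component-counting argument, using that $S$ genuinely crosses $\ell_i$ so that at least two of its closed curves join interface points above $\ell_i$ to interface points below, produces at least two components with $\chi_+(\alpha)=-\tfrac12$, giving $\chi_i\le -1$.

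\emph{Case B:} $S$ passes through at least one twist region in $\ell_i$. The hypothesis $m\ge 4$ guarantees that every maximal block of consecutively met twist regions in row $i$ has an unmet neighbour, so Lemma \ref{finding I arcs} applies at each end of such a block and provides at least two I-arcs (each contributing $-\tfrac12$). Balancing these I-arcs against positive contributors, and using that each simple closed curve of $S\cap P^\pm$ has at most two extrema — so contributes at most $+1$ from extremal U-arcs in $\ell_i$, and at most $+1$ if it is an O-curve — yields $\chi_i\le -\tfrac32$. The main obstacle will be the book-keeping in Case B when a single closed curve crosses $\ell_i$ in many arcs linked by internal saddles, or when a long block of adjacent twist regions is met: Lemma \ref{finding I arcs} directly handles only blocks of length one or two, so I would extract the ``boundary'' of a longer block and argue that the high-twist hypothesis $c\ge 4$ keeps the arcs through each twist region distinct and forces the needed I-arcs there. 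A subcase requiring separate treatment is when $S$ meets every twist region in row $i$; I would handle it either by showing the geometry then directly produces many I-arcs, or by reaching a contradiction with the puncture bound $\le n+1$.
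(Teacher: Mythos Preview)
Your Case A sketch is roughly aligned with the paper's Lemma~\ref{lem:arc two interface neg chi} and Lemma~\ref{lem:arc one interface neg chi}, though you omit the key step: when an I-arc has a free endpoint (which is not forbidden in Case~A), one needs \ref{no double intersection outside} to force its companion I-arc on the same closed curve to have two interface endpoints, and then the $P^+/P^-$ doubling (since $S$ misses twist regions) to get a second pair. Without this, the claim ``at least two components with $\chi_+=-\tfrac12$'' is not justified.

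Case~B has a genuine gap. Your plan relies on Lemma~\ref{finding I arcs}, which explicitly requires an unmet neighbouring twist region; indeed its proof begins by noting there are no O-curves under that hypothesis. But in Case~B nothing prevents O-curves, and an O-curve meets \emph{every} twist region of the row, so no block has an unmet neighbour. You acknowledge this at the end but offer only two escapes: (i) extract the ``boundary'' of a long block, which does not apply since there is no boundary; (ii) invoke the puncture bound $\le n+1$, which is not a hypothesis of this proposition (it enters only in Theorem~\ref{thm: notwistregions}, which \emph{uses} the proposition), so this is circular.

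The paper's approach to Case~B is quite different and hinges on \emph{internal saddles}, which your sketch barely uses. Each component of $S$ through a twist region with $k_j\ge 4$ crossings produces $k_j-2\ge 2$ internal saddles, and these negative contributions dominate. For O-curves (Lemma~\ref{lem:Ocurve neg chi}) one bounds the number $o^+$ of O-curves by $\min_j(k_j-1)s_j$ and balances against $\sum_j(k_j-2)s_j$ internal saddles; here both $m\ge 4$ and $k_j\ge 4$ are genuinely needed. For arcs with free endpoints through $T_1$ or $T_m$ (Lemma~\ref{lem:arc one interface twregion neg chi}) one similarly balances free endpoints (at most $2k_1 s_1 + 2k_m s_m$) against internal saddles, and only then invokes Lemma~\ref{finding I arcs} in the residual subcases where $s_2=\dots=s_{m-1}=0$. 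Your I-arc bookkeeping alone cannot substitute for this saddle count.
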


The proof relies on Lammas \ref{lem:Ocurve neg chi} to \ref{lem:arc one interface neg chi} 
below, where we assume $1\leq i\leq n$. We remark that for a vertical sphere 
$\chi_i = -1$ for all $1\le i \le n$.

\begin{lemma}\label{lem:Ocurve neg chi}
Assume $m\geq 4$ and the diagram is 4-highly twisted. 
Suppose $S\cap P^{\pm} \cap \ell_i$ contains an O-curve. Then $\chi_i<-1$ (and $i$ is even). 
\end{lemma}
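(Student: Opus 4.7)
The plan is to apply \Cref{euler of layer is sum over arcs},
\[
\chi_i = \sum_{\alpha} \chi_+(\alpha) - \#\{\text{internal saddles in } \ell_i\},
\]
and show that the $+1$ contribution of the O-curve to $\sum_\alpha \chi_+(\alpha)$ is overwhelmed by the internal saddles it must force. By symmetry, I assume the O-curve $\alpha_0$ lies in $S\cap P^+ \cap \ell_i$. Property \ref{no scc in layers} immediately gives that $i$ is even. By \Cref{Lem:ChiPlusFormula}, $\chi_+(\alpha_0) = 1$, while every other component of $S\cap P^\pm \cap \ell_i$ contributes at most $\tfrac12$, a maximum achieved only by extremal U-arcs with two free endpoints.

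By \ref{menasco no scc}, $\alpha_0$ meets some twist region $T$ in row $i$, and by \ref{menasco assumptions} it does so in a single arc $\alpha_0 \cap T$. Since $\alpha_0$ is closed and the top and bottom sides of $T$ lie on $\partial \ell_i$, the endpoints of $\alpha_0 \cap T$ lie on the left or right side of $T$. The key geometric step is that inside $T$, the constraint $S \cap K \cap T = \emptyset$ from \Cref{def:general position}(2) forbids this arc from crossing the twisting strands of $K$. Combined with the diagonal structure of each saddle on $S_c^+$, which pairs opposite quadrants of the bubble's equator, this forces $\alpha_0 \cap T$ to cascade through the chain of $k_T \geq 4$ vertically stacked bubbles of $T$ once it enters $T$, contributing a saddle at each. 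Thus $\alpha_0$ alone forces at least $k_T \geq 4$ internal saddles in $T$, yielding $\chi_i \leq 1 - 4 = -3 < -1$ whenever $\alpha_0$ is the only positive contributor.

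To control further positive contributions to $\sum_\alpha \chi_+(\alpha)$, I will argue that every additional O-curve and every extremal U-arc with free endpoints likewise meets a twist region (by \ref{menasco no scc} or \ref{no trivial U-arcs}) and forces its own saddles via the same cascading mechanism. Moreover, each bubble $c$ with $n_c$ saddles admits exactly $2n_c$ boundary arcs on $S_c^+$, so additional visits by distinct components of $S\cap P^+$ force $n_c$ to grow; the same is true on $S_c^-$. A careful double-count of arcs versus saddles then ensures that each positive $\chi_+$ contribution is dominated by a matching increase in internal saddles, preserving $\chi_i < -1$.

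The main obstacle is the precise saddle count: rigorously establishing the cascade of $\geq 4$ saddles by ruling out degenerate ``short-cut'' routes of $\alpha_0 \cap T$ through the chain of bubbles, where the arc enters and exits on the same side after visiting fewer than $k_T$ bubbles. The no-strand-crossing condition together with the diagonal saddle geometry should exclude such routes, but the verification requires a detailed case analysis of the admissible arc patterns in $P^+ \cap T$. A secondary obstacle is the bookkeeping needed to offset the positive $\chi_+$ contributions from other components against their forced saddles.
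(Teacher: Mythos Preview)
Your central geometric claim is wrong, and this breaks the argument. You assert that the O-curve $\alpha_0$ ``cascades through the chain of $k_T\geq 4$ vertically stacked bubbles of $T$.'' But by \ref{menasco assumptions} the curve $\alpha_0$ visits $T$ in a \emph{single} arc, and the general-position pattern (see Figure~\ref{fig:general position} and the arcs $\alpha_0^+,\dots,\alpha_k^+$ in the proof of \Cref{finding I arcs}) shows that each such arc of $S\cap P^+$ passes through at most \emph{one} bubble. What runs through all $k_T$ bubbles is the disk component of $S\cap T$, but that disk meets $P^+$ in $k_T+1$ \emph{separate} arcs, only one of which can belong to $\alpha_0$; the rest are other components of $S\cap P^\pm\cap\ell_i$ with their own $\chi_+$ contributions. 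Moreover, only $k_T-2$ of the $k_T$ saddles in $T$ are \emph{internal} (the top and bottom bubbles are extremal), so even the corrected count from a single twist box gives only $\chi_i\le 1-(k_T-2)\le -1$, not the strict inequality.

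Consequently the hypothesis $m\ge 4$, which the paper explicitly flags as essential here, never enters your argument. The point is that an O-curve winds once around the annulus $\ell_i$ and therefore meets \emph{every} twist box $T_1,\dots,T_m$; it forces $s_j\ge 1$ for all $j$, hence at least $\sum_j(k_j-2)\ge 2m$ internal saddles. The paper then does exactly the bookkeeping you defer as a ``secondary obstacle'': it bounds the number of O-curves by $o^+\le\min_j(k_j-1)s_j$, bounds the free-endpoint contributions from U-arcs by counting how many arcs can exit $T_1$ to the left and $T_m$ to the right, and shows algebraically that these positive terms are dominated by $\tfrac12\sum_j(k_j-2)s_j$ on each side $P^\pm$ precisely when $m\ge 4$ and $k_j\ge 4$. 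That global estimate is the heart of the proof, not a secondary detail.
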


Note \Cref{lem:Ocurve neg chi} is one place where we use the hypotheses $m\geq 4$ and 
$4$-highly twisted.

\begin{proof}
First, if there exists an O-curve, then $i$ is even by  
\ref{no scc in layers}.

Let us consider each side $P^+,P^-$ separately. Assume that there are 
$o^+\ge 1$ O-curves in $P^+$. 

\begin{equation}\label{contribution of O-curves}
\sum_{\gamma\text{ is an O-curve in }P^+}\chi_+(\gamma) = o^+
\end{equation}

Let $T_1,\dots,T_m$ be the twist boxes in $\ell_i$ ordered from left to right. 
Each twist box $T_j$ has $k_j$ bubbles, and the surface $S$ meets it with $s_j$ 
components. That is, there are $k_j\cdot s_j$ saddles in $T_j$, and out of 
them $(k_j-2)\cdot s_j$ are internal saddles. Therefore,

\begin{equation}\label{number of internal bubbles} 
\#\{\text{internal saddles in $\ell_i$}\} = \sum_{j=1}^m (k_j-2)\cdot s_j
\end{equation}

Now, in $P^+$, there are $k_j\cdot s_j$ arcs or curves of $S\cap P^+ \cap \ell_i$ 
entering $T_j$ through its left side, and a similar number leaving it from the right. 
There are $s_j$ curves entering through the bottom, and a similar number leaving 
through the top bubble. Each one of the O-curves passes through $T_j$, enters through 
the left side of $T_j$ and exits through the right side. There are at most 
$(k_j -1)s_j$ such components. Thus,
\begin{equation}\label{O-curves bound}
    o^+ \le \min\{(k_j -1)s_j \;:\;1\le j \le m\}
\end{equation}
    
Any U-arc whose left endpoint is free must enter $T_1$ from the left. Since there 
are $(k_1-1)s_1$ components entering $T_1$ from the left, and $o^+$ of them are O-curves, 
there are at most $((k_1-1)s_1 - o^+)$ U-arcs whose left endpoint is free. Similarly, 
at most $((k_m-1)s_m - o^+)$ U-arcs have free right endpoint. When 
$\alpha$ is not an O-curve, we recall that 
\[ \chi_+(\alpha) \leq \tfrac14\#\{\text{ free endpoints of }\alpha\}, \]
with equality only if $\alpha$ is extremal.
And so
\begin{equation}\label{free endpoints}
\sum_{\alpha\in \{\text{U-arcs of }S\cap P^+\cap \ell_i\}} \chi_+(\alpha) \le 
\tfrac14((k_1-1)s_1 - o^+) + \tfrac14((k_m-1)s_m - o^+)
\end{equation}

Combining the above, by \eqref{contribution of O-curves}, \eqref{number of internal bubbles}, 
and \eqref{free endpoints}
\begin{align*}
\chi_i^+ & := \sum_{\alpha \subset S\cap P^+\cap \ell_i}\chi_+(\alpha) -\tfrac12\#
\{\text{internal saddles in $\ell_i$}\} \\
&\le o^+ + \tfrac14((k_1-1)s_1 - o^+) + \tfrac14((k_m-1)s_m - o^+) - \tfrac12
\sum_{j=1}^m (k_j-2)\cdot s_j
\end{align*}
Simplifying, 
\[ \chi_i^+ \leq 
\tfrac12 o^+ + \tfrac14(3-k_1)s_1 + \tfrac14(3-k_m)s_m - \tfrac12 \sum_{j=2}^{m-1}(k_j-2)s_j
\]
Since there are O-curves $s_j\ge 1$ for all $1\le j \le m$. Also by assumption $k_1, k_m\geq 4$ 

\begin{align*}
\chi_i^+ &< \tfrac12 o^+ - \tfrac12 \sum_{j=2}^{m-1}(k_j-2)s_j  \\
&  \leq \tfrac12 o^+ - \tfrac12(m-2) \min_{\{2\le j\le m-1\}}\{(k_j-2)s_j\} 
\end{align*}

By \eqref{O-curves bound}, 
\begin{align*}
\chi_i^+ &< \tfrac12 \min_{\{1\le j \le m\}}\{(k_j-1)s_j\} - \tfrac12(m-2) 
\min_{\{2\le j\le m-1\}}\{(k_j-2)s_j\} \\
& \leq \tfrac12\max_{\{2\le j \le m-1 \}} \{(k_j-1)s_j-(k_j-2)(m-2)s_j\} \\
& \leq \tfrac12\max_{\{2\le j \le m-1\}} \{ -s_j(k_j-2)(m-3)+s_j\}\\
&\le \tfrac12\max_{\{2\le j \le m-1\}} \{ s_j(1-(k_j-2)(m-3))\}
\end{align*}

Assuming $m\geq 4$ and $k_j\geq 4$, and using $s_j\ge 1$ again for all $1\le j \le m$, we get
\[ \chi_i^+ < -\tfrac12 \]

Now consider $P^-$. 
There may not be an O-curve in $P^-\cap \ell_i$, but \cref{contribution of O-curves} still 
holds, with $o^+$ replaced by $o^-$, the number of O-curves in $P^-$.  
Similarly equations~\eqref{number of internal bubbles} and~\eqref{O-curves bound} 
still hold with $o^+$ replaced by $o^-$. 
Then the same computation as above shows
\[
\chi^-_i: = \sum_{\alpha\subset S\cap P^-\cap \ell_i}\chi_+(\alpha) -\tfrac12\#\{\text{internal 
saddles in $\ell_i$}\}< -\tfrac12
\]
Summing them together we get $\chi_i = \chi^+_i+\chi^-_i<-1$, by 
\Cref{euler of layer is sum over arcs}.
\end{proof}

\begin{lemma}\label{lem:arc two interface neg chi}
Suppose $S\cap P^{\pm}\cap \ell_i$ does not contain an O-curve, and suppose every 
arc of $S\cap P^{\pm}\cap \ell_i$ either has two interface endpoints, or has two free 
endpoints and does not meet a twist region. Then $\chi_i\leq -1$, and $\chi_i<-1$ if 
$S$ passes through a twist region. 
\end{lemma}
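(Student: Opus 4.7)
The plan is to substitute the hypothesis directly into the formula of \Cref{Lem:Chi_i equivalence} and then split into two cases based on whether $S$ meets a twist region in $\ell_i$. Under the hypothesis, $\#\{\text{O-curves in }\ell_i\}=0$, and every arc of $S\cap P^{\pm}\cap\ell_i$ has either two interface endpoints or two free endpoints; by~\ref{no trivial U-arcs}, the arcs of the latter type must be $I$-arcs. Let $N_{II}$ denote the number of non-extremal arcs with two interface endpoints and $N_{FF}$ the number of non-extremal $I$-arcs with two free endpoints, both summed over $P^+$ and $P^-$. A short bookkeeping argument shows that the number of physical free points equals $N_{FF}$: each such point is an endpoint of exactly one arc in $P^+$ and one in $P^-$, and under the hypothesis both of those arcs contribute a free endpoint there. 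Substituting into \eqref{eq: chi_i equivalent def}, the terms from extremal $U$-arcs and interface points cancel, giving the clean identity
\[
\chi_i = -\#\{\text{internal saddles in }\ell_i\} - \tfrac{1}{2}N_{II}.
\]

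If $S$ passes through a twist region $T$ in $\ell_i$, then since the diagram is $4$-highly twisted $T$ has $k\geq 4$ crossings and at least $s\geq 1$ components of $S$ meet $T$, contributing $(k-2)s\geq 2$ internal saddles. Hence $\chi_i\leq -2 < -1$, giving the strict inequality promised by the lemma.

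If $S$ does not meet any twist region in $\ell_i$, then the internal saddle count is zero and $\chi_i=-\tfrac{1}{2}N_{II}$, so it remains to prove $N_{II}\geq 2$. Here I would use~\ref{no double intersection outside}: each simple closed curve $c$ of $S\cap P^{\pm}$ has at most one arc with a free endpoint in $\ell_i$, so if $c$ visits $\ell_i$ in two or more arcs then at least one of those arcs has two interface endpoints. Since $\ell_i$ is typically not the global max/min layer of $c$, such an arc is non-extremal; and because $\ell_i$ contains no bubbles in this case, a physical arc in $S\cap P\cap\ell_i$ is simultaneously a $P^+$ arc and a $P^-$ arc, so a single non-extremal-in-both interface arc already contributes $2$ to $N_{II}$.

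The main obstacle is ruling out the degenerate possibility that every interface arc in $\ell_i$ is simultaneously extremal in both its $P^+$ and $P^-$ containing simple closed curves, which would leave $N_{II}=0$. The plan is to exploit super-incompressibility of $S$, the upper bound of $n+1$ punctures, and condition~\ref{menasco no scc} to derive a contradiction in such configurations, thereby forcing the existence of at least one non-extremal interface arc and consequently $N_{II}\geq 2$.
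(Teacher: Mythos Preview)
Your identity $\chi_i = -\#\{\text{internal saddles in }\ell_i\} - \tfrac{1}{2}N_{II}$ is correct, and the twist-region case is fine and matches the paper. The genuine gap is in the no-twist-region case: you never establish $N_{II}\ge 2$, and the final paragraph is only a sketch of a plan.

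The missing idea is to look for \emph{I-arcs} rather than arbitrary interface arcs. An I-arc is never an extremal U-arc, so it is automatically non-extremal; thus the ``obstacle'' you identify (interface arcs that are extremal in both $P^+$ and $P^-$) disappears once you focus on I-arcs. The paper produces an I-arc in $\ell_i$ as follows: if $S\cap P^\pm\cap\ell_i$ contained no I-arc, then a horizontal line $H$ across $\ell_i$ would miss $S\cap P^\pm$ entirely, so $H$ determines a horizontal bridge sphere with $S$ lying in the trivial tangle on one side; but a trivial tangle complement contains no super-incompressible sphere, a contradiction. Having found one I-arc, the simple closed curve containing it must cross $\ell_i$ again in a second I-arc; by \ref{no double intersection outside} at least one of the two has both endpoints interface. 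Since $\ell_i$ has no bubbles, this I-arc appears in both $P^+$ and $P^-$, giving $N_{II}\ge 2$ and hence $\chi_i\le -1$. Your invocation of \ref{no double intersection outside} was on the right track, but without the bridge-sphere argument you have no curve that is forced to visit $\ell_i$ twice, and without targeting I-arcs you cannot sidestep the extremality issue.
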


\begin{proof}
By \ref{no trivial U-arcs}, every U-arc must have two interface endpoints. 
It follows from \Cref{Lem:ChiPlusFormula} that $\chi_+(\alpha)\leq 0$ for every 
arc $\alpha$. 

\textbf{Case 1.} $S\cap \ell_i$ does not pass through twist boxes. 
First we find an I-arc in $S\cap P^\pm \cap \ell_i$. 
If there is no such I-arc, then there is a horizontal line $H$ running across 
$\ell_i$ that does not intersect $S\cap P^{\pm}$. However, the diagrams above and 
below $H$ are both trivial tangles on $m$ strands. There is no super-incompressible 
sphere contained in the complement of such a tangle in a ball. So there must be an I-arc. 
    
This I-arc belongs to a closed curve, so it must intersect $\ell_i$ in another I-arc. 
By \ref{no double intersection outside}, at least one of them has two interface endpoints. 
Without loss of generality, suppose this I-arc lies in $S\cap P^+\cap \ell_i$; 
call it $\alpha^+$. 

Since $S$ does not pass through twist regions, $S\cap P^-\cap \ell_i$ must contain an 
identical I-arc $\alpha^-$. So there are two I-arcs $\alpha^+,\alpha^-$ in $\ell_i$ on 
$P^+,P^-$ respectively, each with two interface endpoints. Thus, 
$\chi_+(\alpha^+)=\chi_+(\alpha^-)=-\tfrac12$.
Altogether, by \Cref{euler of layer is sum over arcs}, 
\[ \chi_i = \sum_\alpha \chi_+(\alpha) - \#\{\text{internal saddles}\} \le 
    \chi_+(\alpha^+)+\chi_+(\alpha^-)= -1. \]

\textbf{Case 2.} $S\cap \ell_i$ passes through a twist box $T$. 
By the hypothesis that the plat is 4-highly twisted, it must contain at least two 
internal saddles. Since each $\chi_+(\alpha)\leq 0$, it follows that
\[ \chi_i =\sum_\alpha \chi_+(\alpha) - \#\{\text{internal saddles}\} \le 0 - 2 <-1 \qedhere \]
\end{proof}

\begin{lemma}\label{lem:arc one interface twregion neg chi}
Assume there are no O-curves and there exists an arc $\alpha$ with at most one 
interface endpoint that passes through a twist region. Then $\chi_i<-1.$
\end{lemma}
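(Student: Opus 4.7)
The strategy is to combine a saddle count from the twist region $T$ that $\alpha$ passes through with a bound on $\chi_+(\alpha)$, via the formula $\chi_i = \sum_\beta \chi_+(\beta) - I$ of \Cref{euler of layer is sum over arcs}, where $I$ is the number of internal saddles in $\ell_i$. Since $T$ has $k \geq 4$ crossings and $S \cap T$ has at least one component, the $4$-highly twisted hypothesis gives $I \geq (k-2) \geq 2$ from the internal saddles in $T$ alone.

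Since $\alpha$ has at most one interface endpoint (hence at least one free endpoint) and is not an O-curve, \Cref{Lem:ChiPlusFormula} yields $\chi_+(\alpha) \leq \tfrac{1}{2}$, with equality only when $\alpha$ is an extremal U-arc with two free endpoints. The main obstacle, and crucial step, is then to bound $\sum_{\beta \neq \alpha} \chi_+(\beta) \leq 0$. The only arcs with positive $\chi_+$ are extremal U-arcs with free endpoints. By \ref{no double intersection outside}, within the simple closed curve of $S \cap P^\pm$ containing $\alpha$, no other arc in $\ell_i$ has free endpoints, so those arcs contribute $\chi_+ \leq 0$. For arcs lying in other simple closed curves of $S \cap P^\pm$, I would argue by a structural case analysis that any positive contribution from an extremal U-arc with free endpoints is either absent or compensated by additional internal saddles from further twist regions the arc must traverse (again invoking the $4$-highly twisted hypothesis) or by forced sibling arcs through $T$ contributing negatively.

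Assuming these bounds hold, we conclude $\chi_i \leq \tfrac{1}{2} + 0 - 2 = -\tfrac{3}{2} < -1$, as required. The hard part will be the structural case analysis in the preceding step; I expect it to proceed in the spirit of \Cref{lem:Ocurve neg chi} and \Cref{lem:arc two interface neg chi}, leveraging the $4$-highly twisted hypothesis together with the normal-form properties \ref{no vertical intersection}--\ref{no double intersection outside} of \Cref{removing unwanted intersections} to rule out excess positive contributions.
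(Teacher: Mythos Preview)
Your proposal has a genuine gap at the ``crucial step'': the bound $\sum_{\beta\neq\alpha}\chi_+(\beta)\le 0$ is not justified, and the compensation mechanism you suggest is the wrong one. The problem is that a \emph{single} component of $S\cap T$ produces many arcs in $S\cap P^\pm\cap\ell_i$, lying in many different simple closed curves of $S\cap P^\pm$. Several of these arcs can be extremal U-arcs with a free endpoint (hence $\chi_+=\tfrac14$), yet they all share the same $(k-2)$ internal saddles you have already spent to cancel $\alpha$. Your suggestion that positive contributions are offset by ``additional internal saddles from further twist regions the arc must traverse'' fails precisely here: these sibling arcs traverse the same twist region $T$, not a further one, and the ``forced sibling arcs contributing negatively'' need not exist (a simple closed curve may meet $\ell_i$ in a single extremal U-arc). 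With $k=4$ and $s=1$, for instance, one can have up to $2k=8$ free arc-endpoints on the left of $T$, giving positive $\chi_+$ totalling $2$, against only $2$ internal saddles --- net zero, not $<-1$.

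The paper's proof addresses exactly this by balancing \emph{all} free endpoints at once against the internal saddles of the outermost twist regions $T_1,T_m$: since an arc with a free endpoint must enter $T_1$ from the left or $T_m$ from the right, one gets $\tfrac14 F_L - (k_1-2)s_1\le -\tfrac14 G_L$ (using $k_1\ge 4$), where $G_L$ counts arcs entering $T_1$ from the left that do \emph{not} end in a free point. This yields $\chi_i\le -\tfrac14(G_L+G_R)-\tfrac12\#\{\text{non-extremal arcs}\}-\#\{\text{internal saddles in }T_2,\dots,T_{m-1}\}$, which is only $\le 0$ so far. The strict inequality $<-1$ then comes from a genuine case analysis: either some middle $T_j$ contributes extra internal saddles, or arcs wrap from $T_1$ to $T_m$ (inflating $G_L+G_R$), or \Cref{finding I arcs} produces enough non-extremal I-arcs. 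None of this is captured by your proposed accounting.
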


\begin{proof}
The arc $\alpha$ must have at least one free endpoint, so $i$ is even. 
Enumerate the twist boxes $T_1,\dots,T_m$, and denote by $k_1,\dots,k_m$, $s_1,\dots,s_m$ 
the number of crossings in $T_1,\dots,T_m$ and the number of components of 
$S\cap T_1,\dots,S\cap T_m$ respectively.

Since $\alpha$ has at least one free endpoint, it must (a) pass through $T_1$ and have a 
free endpoint to its left, or (b) pass through $T_m$ and have a free endpoint to its 
right (or both). Let $F_L$ be the number of free endpoints corresponding to (a), and 
$F_R$ be the number of free endpoints corresponding to (b). Note that each free endpoint 
is actually counted twice, once as an endpoint of a curve in $P^+$ and once in $P^-$. 
Then $F_L \le 2k_1 \cdot s_1$ and $F_R \le 2k_m\cdot s_m$. Let us denote by 
$G_L = 2k_1s_1 - F_L$ (resp.\ $G_R = 2k_m s_m - F_R$) the arcs meeting the left 
side of $T_1$ (resp.\ the right side of $T_m$) and not ending in a free endpoint. 
    
Note that 
\begin{multline}
\tfrac14 F_L  - \#\{\text{internal saddles of }T_1\} = \tfrac14 (2k_1 \cdot s_1) - 
\tfrac14 G_L  - (k_1-2)s_1 
\\
= (-\tfrac12 k_1 +2)s_1 - \tfrac14 G_L \le - \tfrac14 G_L     
\end{multline}
where the last inequality follows from the assumption $k_1\ge 4$.
Similarly, the assumption $k_m\ge 4$ implies
\[
\tfrac14 F_R  - \#\{\text{internal saddles of }T_m\} \le - \tfrac14 G_R
\]

We need to compute $\chi_i = \sum \chi_+(\alpha) - \#\{\text{internal saddles in }\ell_i\}$. 
By \Cref{euler of layer is sum over arcs}, \Cref{Lem:ChiPlusFormula} and the fact that 
there are no O-curves, this is given by
\begin{align*}
\chi_i &= \sum \chi_+(\alpha) - \#\{\text{internal saddles in }\ell_i\} \\
&= \tfrac14 \#\{\text{free endpoints of arcs}\} - \tfrac12 \#\{\alpha\text{ non-extremal}\} - \#
\{\text{internal saddles in }\ell_i\} \\
&= \tfrac14 (F_L + F_R) -\tfrac12\#\{\alpha\text{ non-extremal}\} - \#\{\text{internal 
saddles of }T_1,T_m\} - \\
& \hspace{3cm}  \#\{\text{internal saddles of }T_2,\dots, T_{m-1}\}
\end{align*}
The above inequalities relating $F_L, F_R$ and $G_L, G_R$ imply
\begin{equation}\label{Eqn:ChiBound}
\chi_i \leq -\tfrac14 (G_L + G_R) - \tfrac12\#\{\alpha\text{ non-extremal}\} 
- \#\{\text{internal saddles of }T_2,\dots, T_{m-1}\}
\end{equation}

\textbf{Case 1.} Suppose some $s_j \in \{s_2, \dots, s_{m-1}\}$ is nonzero. Then by the 
assumption that $k_j\geq 4$, there are at least two internal saddles. In this case the 
third term in the inequality \eqref{Eqn:ChiBound} is then at most $-2$, with the other 
two terms non-positive, so we have $\chi_i \leq -2 < -1$. 

\textbf{Case 2.} If all of $s_2= \dots = s_{m-1} =0$, \Cref{Eqn:ChiBound} becomes
\begin{equation}\label{Eqn:ChiBound2}
\chi_i \leq -\tfrac14 (G_L+G_R) - \tfrac12\#\{\alpha\text{ non-extremal}\}
\end{equation}
We divide into further cases.

Consider an arc in $S\cap P^+\cap \ell_i$ that runs from the left side of $T_1$, around 
the annulus $\ell_i$ avoiding all other twist regions, to connect to the right side of $T_m$. 
The existence of any such arc implies there is another such arc on $P^-$. These are each 
counted in both $G_L$ and $G_R$. Thus any such arc contributes $4$ to $G_L+G_R$.  
    
\textbf{Case 2.A.} There are at least two such arcs in $S\cap P^+\cap \ell_i$. 
Then the first term of \Cref{Eqn:ChiBound2} is at most $-\tfrac14\cdot 8 \leq -2 < -1$, 
and the second is nonpositive, so we have the result in this case. 

\textbf{Case 2.B.} There is exactly one such arc in $S\cap P^+\cap \ell_i$. Then 
\Cref{finding I arcs}~\ref{two twist regions I-arcs} implies that there are (at least) 
two I-arcs. This means that the first term of \Cref{Eqn:ChiBound2} is at most 
$-\tfrac14\cdot 4$ and the second is at most $-\tfrac12 \cdot 2$, so the sum is at most
\[ \chi_i \leq -1 - 1 < -1 \]

\textbf{Case 2.C.}
Suppose there are no such arcs. The case $s_1=s_m=0$ does not occur because of our 
assumptions, so suppose without loss of generality that $S\cap P^\pm \cap \ell_i$ 
meets $T_1$. Then, by \Cref{finding I arcs} \ref{one twist region I-arcs}, there 
are at least two I-arcs $\alpha_1$, $\alpha_2$ passing through $T_1$.

Each of $\alpha_i$ belongs to a simple closed curve of $S\cap P^\pm$ that intersects 
$\ell_i$ in another I-arc, say $\beta_i$. The curves $\alpha_1$, $\alpha_2$, $\beta_1$,
$\beta_2$ must all be distinct by \Cref{menasco assumptions}. This means we have four 
distinct I-arcs in this case. 
Then the second term of \Cref{Eqn:ChiBound2} is at most $-\tfrac12 \cdot 4 = -2$, and 
the first term is non-positive, so $\chi_i < -1$. 
\end{proof}

\begin{lemma}\label{lem:arc one interface neg chi}
Suppose $S\cap P^{\pm} \cap \ell_i$ contains no O-curve, and every arc with at most one 
interface point does not pass through a twist region. Suppose there is an arc $\alpha$ 
with one free endpoint, such that $\alpha$ does not pass through a twist region. 
Then $\chi_i <-1$. 
\end{lemma}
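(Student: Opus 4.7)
The plan is to locate one additional arc in the same $P^+$-component as $\alpha$ and, counting its contribution together with $\alpha$'s on both $P^+$ and $P^-$, drive $\chi_i$ below $-1$.

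First, $\alpha$ must be an I-arc: by \ref{no trivial U-arcs} any U-arc with a free endpoint passes through a twist region, contradicting the hypothesis on $\alpha$. Let $p$ be the free endpoint and $q$ the interface endpoint of $\alpha$. Since $\alpha$ avoids all twist regions it avoids every bubble and lies entirely in $P$, where $P^+$ and $P^-$ agree; thus $\alpha$ appears as an arc of both $S\cap P^+\cap\ell_i$ and $S\cap P^-\cap\ell_i$, contributing $2\chi_+(\alpha)=-\tfrac12$ to $\chi_i$.

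Let $c$ be the component of $S\cap P^+$ containing $\alpha$. I would produce a second arc $\alpha'\subset c\cap\ell_i$ with $\chi_+(\alpha')=-\tfrac12$, splitting into cases on the location of $q$. If $q$ lies on $K$ in the interior of $\ell_i$, then at $q$ the curve $c$ crosses $K$ transversely, so the other local branch of $c$ through $q$ is an arc $\alpha'\ne\alpha$ in $\ell_i$ ending at $q$; since $q$ is interior to $\ell_i$, this $\alpha'$ is an I-arc. If instead $q$ lies on $\partial\ell_i$, let $\gamma$ denote the boundary component of $\ell_i$ containing $q$. Then $\alpha$ contributes a single point to $c\cap\gamma$, and since $c$ is a simple closed curve on the sphere $P^+$ and $\gamma$ separates $P^+$, the intersection $c\cap\gamma$ is even, hence at least $2$; so $c\cap\ell_i$ contains a second arc $\alpha'$ meeting $\gamma$ at a point other than $q$.

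In either case, \ref{no double intersection outside} forces $\alpha'$ to have no free endpoints, hence two interface endpoints. I would then verify that $\alpha'$ is non-extremal. I-arcs are never extremal by definition, which handles the interior case. In the boundary case, $p$ and $q$ lie on opposite boundary components of $\ell_i$, so from $q$ the curve $c$ enters one of $\ell_{i\pm 1}$ while from $p$ it enters the other; consequently both the global maximum and the global minimum of $c$ lie strictly outside $\ell_i$, precluding any arc of $c\cap\ell_i$ from being extremal. Either way, $\chi_+(\alpha')=-\tfrac12$.

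Finally, since $S\cap P^+$ and $S\cap P^-$ agree outside bubbles, the $P^-$-component corresponding to $c$ contains an arc $\alpha'^-$ sharing the interface endpoints of $\alpha'$, and the same reasoning gives $\chi_+(\alpha'^-)=-\tfrac12$. By hypothesis $\ell_i$ has no O-curves, and by \ref{no trivial U-arcs} combined with our hypothesis no U-arc of $\ell_i$ has a free endpoint; therefore every other arc contributes $\chi_+\le 0$ and the number of internal saddles is non-negative, yielding
\[ \chi_i \;\le\; 2\chi_+(\alpha)+\chi_+(\alpha')+\chi_+(\alpha'^-)\;\le\;-\tfrac12-1 \;=\;-\tfrac32 \;<\;-1. \]
The main delicacy is verifying that $\alpha'$ is non-extremal in the boundary case; once this is in hand, the counting closes immediately.
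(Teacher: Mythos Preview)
Your argument is essentially the same as the paper's, and correct in outline. Two remarks.

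First, your case split on the location of $q$ is unnecessary: since $\alpha$ avoids twist regions and (by \ref{no vertical intersection}) avoids the vertical segments of $K$, the interface endpoint $q$ must lie on a horizontal strand of $K$, and these are always on $\partial\ell_i$. Only your second case ever occurs.

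Second, the passage from $\alpha'$ to $\alpha'^-$ is not justified as written. Because $\alpha'$ has two interface endpoints, the hypotheses allow it to pass through a twist region; when it does, $S\cap P^+$ and $S\cap P^-$ genuinely differ there, there is no canonical ``$P^-$-component corresponding to $c$'', and no reason for a single $P^-$ arc to share both endpoints of $\alpha'$. The fix is immediate: run your parity and non-extremality arguments independently on the $P^-$ curve $c^-$ containing $\alpha_-=\alpha$ to produce $\alpha'^-$. This is exactly what the paper does; it also streamlines slightly by noting that parity on \emph{both} boundary circles of $\ell_i$ forces $c$ (and likewise $c^-$) to contain a second I-arc, which is automatically non-extremal, so the global-extremum step can be dispensed with.
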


\begin{proof}
By \ref{no trivial U-arcs}, $\alpha$ is an I-arc. It has exactly one free endpoint and one 
interface endpoint. 

Since it does not pass through a twist region, $\alpha$ appears both in $P^+$ and $P^-$. 
Let us call these instances $\alpha_+,\alpha_-$ respectively. The arc $\alpha_+$ is 
a subarc of a simple closed curve in $P^+$ that must intersect $\ell_i$ again 
in some other I-arc $\beta_+$. By \ref{no double intersection outside}, $\beta_+$ 
must have two interface endpoints. Similarly one defines $\beta_-$, and shows 
that it has two interface endpoints. 

There are no O-curves. There also cannot be U-arcs with free endpoints, since 
by \ref{no trivial U-arcs} these would meet a twist region, and no such arcs exist by 
hypothesis (alternatively, they are handled by \Cref{lem:arc one interface twregion neg chi}). 
Thus by \Cref{Lem:ChiPlusFormula} there are no arcs or curves with positive $\chi_+$, 
and so by \Cref{euler of layer is sum over arcs}
\[ \chi_i\le \sum_{\gamma}\chi_+(\gamma) \le\chi_+(\alpha_+) + \chi_+(\beta_+) + 
    \chi_+(\alpha_-)+\chi_+(\beta_-) \le -\tfrac14 -\tfrac12 -\tfrac14 -\tfrac12 <-1 \qedhere \]
\end{proof}

\begin{proof}[Proof of \Cref{inequality for inbetween}]
If $i$ is odd, all arcs $S\cap P^{\pm} \cap \ell_i$ are as in \Cref{lem:arc two 
interface neg chi} and the result holds by that lemma. 

If $i$ is even, then either there is an O-curve and the result holds by 
\Cref{lem:Ocurve neg chi}, or all arcs are as in \Cref{lem:arc two interface neg chi} 
and the result holds by that lemma, or there is an arc with at most one interface 
endpoint. In that case, the result holds by Lemma~\ref{lem:arc one interface neg chi}
or~\ref{lem:arc one interface twregion neg chi}. 
\end{proof}

\begin{lemma}\label{inequality for top and bottom}
For the top two levels, $\chi_0+\chi_1\le -\tfrac12$. Similarly, for the 
bottom levels, $\chi_n+\chi_{n+1}\le -\tfrac12$. 
If $S$ passes through a twist region in 
$\ell_1$ (or resp.\ $\ell_n$) then the inequality is strict.
\end{lemma}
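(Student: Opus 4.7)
The plan is to adapt the case analysis used in the proof of \Cref{inequality for inbetween} to the combined quantity $\chi_0+\chi_1$, exploiting the special structure of $\ell_0$ as a disk with no twist regions. Since $\ell_0$ contains no twist regions, there are no O-curves and no internal saddles in $\ell_0$, and every component of $S\cap P^\pm\cap\ell_0$ is a U-arc (both endpoints on the single boundary circle $\partial\ell_0$). Moreover, $\ell_0$ is the topmost layer, so an extremal U-arc in $\ell_0$ must contain the global $y$-maximum of its simple closed curve in $S\cap P^\pm$; each such closed curve contributes at most one extremal arc to $\ell_0$. Combined with \ref{no double intersection outside}, which permits at most one arc per closed curve in $\ell_0$ to have free endpoints, this yields a per-curve bound on the contribution to $\chi_0$ analogous to those appearing in the proof of \Cref{inequality for inbetween}, saturated when a closed curve contributes a single extremal arc with two free endpoints (as occurs for the vertical sphere, where $\chi_0=\tfrac12$).

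The case analysis then proceeds as follows. If every arc of $S\cap P^\pm\cap\ell_1$ has only interface endpoints, then no free endpoint appears on the shared boundary $\partial\ell_0=\partial\ell_0\cap\partial\ell_1$, so no arc in $\ell_0$ has free endpoints either, giving $\chi_0\le 0$. Combined with $\chi_1\le -1$ from \Cref{lem:arc two interface neg chi} or \Cref{lem:Ocurve neg chi}, this yields $\chi_0+\chi_1\le -1$, strict when $S$ passes through a twist region in $\ell_1$ (by the strict version of \Cref{inequality for inbetween}). If instead some arc of $S\cap P^\pm\cap\ell_1$ has a free endpoint, then by \ref{no trivial U-arcs} this arc passes through a twist region in $\ell_1$, so $S$ does pass through a twist region there, and an inspection of the proof of \Cref{lem:arc one interface twregion neg chi} shows $\chi_1\le -2$. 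The matched free endpoints in $\ell_0$ each contribute at most $\tfrac14$ to $\chi_0$ through $\chi_+$, while the linear dependence on free endpoints in the bound for $\chi_1$ (through the quantities $F_L, F_R$ in the proof of \Cref{lem:arc one interface twregion neg chi}) provides the needed cancellation, again giving $\chi_0+\chi_1\le -1<-\tfrac12$. The remaining configurations involving arcs with one free endpoint not passing through a twist region are handled by combining \Cref{lem:arc one interface neg chi} with the same per-curve bound on $\chi_0$.

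The main obstacle is bookkeeping: precisely matching the positive contributions to $\chi_0$ from extremal U-arcs with free endpoints against the extra negativity in $\chi_1$ coming from the same free endpoints via arcs that must pass through twist regions, so that the net sum never exceeds $-\tfrac12$. Once this correspondence is established, the companion inequality $\chi_n+\chi_{n+1}\le -\tfrac12$ follows by applying the identical argument to the bottom two layers, using the reflection of the plat diagram across a horizontal axis.
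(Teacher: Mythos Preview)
Your case analysis has a genuine gap that misses precisely the equality case. Consider the vertical sphere: in $\ell_1$ its intersection with $P^\pm$ consists of two I-arcs, one (coming from the monotone arc $\alpha$) with two interface endpoints, and one (coming from the outer arc $\beta$) with \emph{two free} endpoints, passing through no twist region. Your Case~2 (``some arc of $\ell_1$ has a free endpoint'') applies here, but your appeal to \ref{no trivial U-arcs} fails: that condition concerns only U-arcs, and the $\beta$-arc is an I-arc. So you cannot conclude that $S$ passes through a twist region, and indeed for the vertical sphere it does not. Your ``remaining configurations'' clause does not cover this either, since it speaks of arcs with \emph{one} free endpoint, and \Cref{lem:arc one interface neg chi} yields a strict inequality $\chi_1<-1$, which is false here ($\chi_1=-1$ for the vertical sphere). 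In short, the case that actually realises $\chi_0+\chi_1=-\tfrac12$ falls through every branch of your argument.

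The paper takes a different route. Instead of trying to reuse the interior lemmas on $\ell_1$ separately, it works directly with the sum $\chi_0+\chi_1 = \sum_\gamma \chi_+(\gamma) - \#\{\text{internal saddles}\}$ over all arcs $\gamma$ in $\ell_0\cup\ell_1$. The only arcs with $\chi_+>0$ are the extremal U-arcs $\alpha_1,\dots,\alpha_s$ in $\ell_0$ having exactly one interface endpoint ($\chi_+(\alpha_i)=\tfrac14$). Each $\alpha_i$ is paired with the arc $\beta_i$ in $\ell_1$ meeting it at its interface endpoint; one checks $\beta_i$ is non-extremal with two interface endpoints, so $\chi_+(\beta_i)=-\tfrac12$. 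Thus each pair contributes $-\tfrac14$, the $\beta_i$ are distinct by \ref{no double intersection outside}, and $s\ge 2$ since each such arc appears in both $P^+$ and $P^-$. This pairing is the missing idea: it gives the tight bound $-\tfrac14 s\le -\tfrac12$ directly, with strictness coming from internal saddles when $S$ meets a twist region.
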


Again, we remark that for a vertical sphere there is equality.

\begin{proof}
We prove the result for $\ell_0$ and $\ell_1$; the case of $\ell_n$ and 
$\ell_{n+1}$ is similar. By \Cref{lem:arc two interface neg chi},  
$\chi_1\le -1$, and this inequality is strict if $S$ passes 
through a twist region in $\ell_1$. Thus, if $\ell_0$ contains no U-arcs 
with exactly one interface point, then we are done, since in this 
case $\chi_0\le 0$.
    
Otherwise, let $\alpha_1,\dots,\alpha_s$, $s\ge 1$, be all the U-arcs 
with one interface endpoint in $\ell_0$. Note that in fact $s\ge 2$, 
because each arc of this form appears both in $P^+$ and $P^-$.
    
The interface endpoint of $\alpha_i$ is the endpoint of an arc 
$\beta_i$ in $\ell_1$ on the same side $P^+$ or $P^-$ as $\alpha_i$. 
No two $\alpha_i$ correspond to the same arc $\beta_i$ since otherwise 
they would belong to the same curve, in contradiction to 
\ref{no double intersection outside}.
    
The arc $\beta_i$ has two interface endpoints, and is not an extremal 
U-arc because the other endpoint of $\alpha_i$ (which is free) meets an I-arc 
in $\ell_1$, so the curve containing $\beta_i$ meets $\ell_2$, and so $\beta_i$ 
does not contain the global minimum of this curve. Hence $\chi_+(\beta_i)= -\tfrac12$.
Together $\chi_+(\alpha_i) + \chi_+(\beta_i)= -\tfrac14$. 
Since all arcs in $\ell_0,\ell_1$ beside $\alpha_1,\dots,\alpha_n$ 
have $\chi_+\le 0$, \Cref{euler of layer is sum over arcs} implies
$$\chi_0+\chi_1 \le \sum_{i=1}^s(\chi_+(\alpha_i)+\chi_+(\beta_i)) 
- \#\{\text{internal saddles}\} \le -\tfrac14 s \le -\tfrac12$$

Moreover, if $S$ passes through a twist region then there are internal
saddles. This renders the inequality strict. 
\end{proof}

\begin{proof}[Proof of \Cref{thm: notwistregions}]
Let $S$ be a super-incompressible sphere with at most $p=n+1$ punctures satisfying 
\ref{no vertical intersection}--\ref{no double intersection outside}.
Then $2-p \leq \chi(S)$. 
By \Cref{euler by layer}, using the fact that $n\geq 2$, 
\begin{align*}
2-p&\leq \chi(S) \\
&= \sum_{i=0}^{p}\chi_i\\
    &= \left(\chi_0 + \chi_1\right)h + \sum_{i=2}^{n-1} \chi_i + \left(\chi_n + 
    \chi_{n+1}\right) \\
    &\le -\tfrac12 +\cdot (n-2)\cdot (-1) - \tfrac12 
    \\&= 2-p,
\end{align*}
where the last inequality is by \Cref{inequality for inbetween,inequality for top and bottom}. 
This implies that $\chi(S)=2-p$, and thus $S$ has $n+1$ punctures. The sphere $S$ cannot 
pass though a twist region, as otherwise, by \Cref{inequality for inbetween,inequality 
for top and bottom}, 
the inequality above would be strict and result in a contradiction.
\end{proof}

\section{Collections of vertical spheres}\label{sec:Collections}
In this section, we use the above results for  collections of super-incompressible 
vertical spheres, to deduce the image of a vertical sphere under the isotopy $\varphi$. 
We consider a ``maximal'' collection (see Definition \ref{def:maximal}) of such 
vertical spheres which satisfy the additional property that any two 
successive spheres bound a single twist region between them. We then show that 
such a maximal collection is mapped, by $\varphi$, to a similar maximal 
collection and that the twist regions in between successive spheres, in such a 
collection, are uniquely determined as well. This is enough to show that all  
twist regions that are not the first or last in their row are independent 
of the  plat projection. 

\vskip7pt

Throughout this section we will work under the assumption of \Cref{thm:UniquePlat}. 
More precisely:

\begin{assumption}\label{ass: basic assumption}
    Let $K,K'$ be two $4$-highly twisted plat projections, 
    on the plane $P$, of widths $m,m'\ge 4$ and heights $n,n'$ respectively. 
    Assume that $K$ and $K'$ represent the same knot $\mathcal{K}$, and 
    let $\varphi\from (S^3,K')\to (S^3,K)$ be an ambient isotopy taking $K'$ to $K$. 
\end{assumption}

\begin{theorem}\label{cor: verticaltovertical}  
Under \Cref{ass: basic assumption}, if $S'\subset(S^3, K')$ is a vertical 
sphere  then $\varphi (S')$ is a vertical sphere in $(S^3, K)$. In particular, 
 the diagrams $K$ and $K'$ have the same height, $n=n'$.
\end{theorem}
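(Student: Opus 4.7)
The plan is to chain together the three main structural theorems established so far: \Cref{Thm:super incompressible}, \Cref{removing unwanted intersections}, \Cref{thm: notwistregions}, and \Cref{thm:Verticalspheres}.

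Starting from a vertical sphere $S'\subset (S^3,K')$, I would first note that by \Cref{Thm:super incompressible}, applied to the $4$-highly twisted diagram $K'$ of width $m'\geq 4\geq 3$, the sphere $S'$ is super-incompressible in $S^3\ssm \NN(\mathcal{K})$. Since $\varphi\from (S^3,K')\to (S^3,K)$ is an ambient isotopy of pairs, the image $\varphi(S')$ is super-incompressible in $S^3\ssm \NN(\mathcal{K})$ as well, and it meets $K$ transversely in exactly $n'+1$ points (the number of punctures is an isotopy invariant of the pair).

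Next I would place $\varphi(S')$ in general position with respect to $K$ and apply \Cref{removing unwanted intersections} to isotope $\varphi(S')$ in $S^3\ssm \NN(K)$ so that it satisfies conditions \ref{no vertical intersection}--\ref{no double intersection outside}; this does not change the isotopy class of $\varphi(S')$ or its number of punctures.

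The main obstacle is that \Cref{thm: notwistregions} requires at most $n+1$ punctures, whereas $\varphi(S')$ has $n'+1$ punctures, and we have no a priori comparison of $n'$ with $n$. I would resolve this with a symmetry argument: the hypotheses of \Cref{ass: basic assumption} are symmetric in $(K,K')$ under replacing $\varphi$ by $\varphi^{-1}$, so without loss of generality assume $n'\leq n$. Then $n'+1\leq n+1$, and \Cref{thm: notwistregions} applies, yielding that $\varphi(S')$ has exactly $n+1$ punctures and passes through no twist region of $K$. Comparing puncture counts forces $n=n'$, which proves the ``in particular'' claim.

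To finish, I would observe that $\varphi(S')$ is now a super-incompressible sphere in $(S^3,\mathcal{K})$ meeting $\mathcal{K}$ in exactly $n+1$ points and disjoint from every twist region of $K$. Applying \Cref{thm:Verticalspheres} to $\varphi(S')$ gives that it is isotopic to a vertical $2$-sphere in $(S^3,K)$; by the convention of \Cref{def:vertical-two-sphere}, $\varphi(S')$ is therefore itself a vertical sphere in $(S^3,K)$, as required.
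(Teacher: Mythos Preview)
Your proof is correct and follows essentially the same route as the paper: assume without loss of generality $n'\le n$, use \Cref{Thm:super incompressible} to get super-incompressibility, push $\varphi(S')$ through \Cref{thm: notwistregions} to avoid twist regions and force $n=n'$, then finish with \Cref{thm:Verticalspheres}. Your explicit invocation of \Cref{removing unwanted intersections} before applying \Cref{thm: notwistregions} is a good clarification that the paper leaves implicit.
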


\begin{proof} 
Without loss of generality $n'\le n$  (otherwise consider $\varphi^{-1}$).
As $S'$ is a vertical sphere, it is super-incompressible by \Cref{Thm:super incompressible}. 
It has $n'+1$ intersections with $K'$. The ambient isotopy 
$\varphi$ takes $K'$ to $K$. Hence the image $S \ssm \NN(K) =  \varphi(S' \ssm \NN(K))$ 
is super-incompressible in $S^3 \ssm \NN(K)$ with $n'+1 \leq n+1$ intersections with $K$. 
Theorem~\ref{thm: notwistregions} tells us that $S$ does not pass through twist 
regions. Hence by Theorem~\ref{thm:Verticalspheres} the sphere $S$ is a vertical  
$2$-sphere. In particular, $n=n'$.
\end{proof}

Consider now isotopies of collections of vertical $2$-spheres 
within a diagram. 

\begin{definition}\label{def:maximal}
A disjoint collection $\calS$ of vertical spheres is \emph{maximal} if any 
two vertical spheres in $\calS$ are non-isotopic and the set $\calS$ is maximal
with respect to this property. 
\end{definition}

\begin{remark}
A maximal collection of vertical 2-spheres $\calS$ has the following properties. 
\begin{enumerate}
\item The collection is finite. Its spheres can be enumerated $S_1,\dots,S_r$ 
from left to right such that any consecutive pair $S_j,S_{j+1}$ is not separated 
by any other sphere $S\subset\calS$.
\item If $S_j,S_{j+1}$ are consecutive then the region of the diagram in 
between them contains exactly one twist region.
\item The first sphere is the leftmost sphere $S_1 = S(1,\dots,1)$.
The last sphere is the rightmost sphere $S_r=S(m-2,m-1,m-2,\dots,m-2)$ 

\item The number $r$ of spheres in a maximal collection is one more 
than the number of twist regions that are not leftmost or rightmost 
in their level. So it can be expressed in terms of $m$ and $n$, namely, 
\begin{equation}
\label{size of maximal collection}
r = \lceil\tfrac{n}{2}\rceil(m-3) + \lfloor\tfrac{n}{2}\rfloor(m-2) + 1
\end{equation} 
\end{enumerate}
\end{remark}

\begin{lemma}\label{lem: maximal goes to maximal}
Under \Cref{ass: basic assumption}, 
suppose $\calS'$ is a maximal disjoint collection of vertical spheres 
in $K'$. Then the image $\varphi(\calS')$ is a maximal disjoint collection 
of vertical spheres in $K$. In particular, $K$ and $K'$ have the same width.
\end{lemma}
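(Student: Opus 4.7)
The plan is to apply the ``disjoint collections'' version of \Cref{thm:Verticalspheres} directly to the image $\varphi(\calS')$, then extract maximality from a bijectivity argument, and finally read off $m=m'$ from the counting formula \eqref{size of maximal collection}.

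First I would note that, by \Cref{Thm:super incompressible}, every sphere in $\calS'$ is super-incompressible in $S^3\ssm\NN(K')$. Since $\varphi$ is an ambient isotopy carrying $(S^3,K')$ to $(S^3,K)$, the image $\varphi(\calS')$ is a disjoint union of spheres, each of which is super-incompressible in $S^3\ssm\NN(K)$ (super-incompressibility is preserved by homeomorphism of pairs). Moreover, each component $S=\varphi(S')$ meets $K$ in exactly $n'+1=n+1$ points by \Cref{cor: verticaltovertical}. Apply \Cref{removing unwanted intersections} to isotope the whole collection $\varphi(\calS')$ into general position satisfying \ref{no vertical intersection}--\ref{no double intersection outside}. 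Now \Cref{thm: notwistregions} applied to each component tells us that no sphere in $\varphi(\calS')$ passes through a twist region. The hypotheses of \Cref{thm:Verticalspheres} are then satisfied for the entire disjoint collection, so (after a further isotopy) $\varphi(\calS')$ is a disjoint collection of vertical $2$-spheres in $K$.

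Next I would establish maximality. The spheres in $\varphi(\calS')$ are pairwise non-isotopic in $S^3\ssm\NN(K)$, since their pre-images in $\calS'$ are pairwise non-isotopic in $S^3\ssm\NN(K')$ and $\varphi$ is an ambient isotopy. Suppose for contradiction that $\varphi(\calS')$ were not maximal; then there would exist a vertical sphere $T\subset(S^3,K)$, disjoint from every sphere of $\varphi(\calS')$ and non-isotopic to any of them. Pulling back, $\varphi^{-1}(T)$ would be a super-incompressible sphere in $S^3\ssm\NN(K')$ with $n+1=n'+1$ intersection points, so again by \Cref{thm: notwistregions} and \Cref{thm:Verticalspheres} it is isotopic (through the complement of $\calS'$) to a vertical sphere in $K'$ disjoint from and not isotopic to any sphere of $\calS'$. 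This contradicts maximality of $\calS'$. Hence $\varphi(\calS')$ is a maximal collection of vertical spheres in $K$.

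Finally, for the width equality: both collections $\calS'$ and $\varphi(\calS')$ have the same cardinality since $\varphi$ is a bijection. By the counting formula \eqref{size of maximal collection} applied in each plat, and using $n=n'$ from \Cref{cor: verticaltovertical},
\[
\lceil\tfrac{n}{2}\rceil(m'-3) + \lfloor\tfrac{n}{2}\rfloor(m'-2) + 1
\;=\;
\lceil\tfrac{n}{2}\rceil(m-3) + \lfloor\tfrac{n}{2}\rfloor(m-2) + 1,
\]
which simplifies to $n(m-m')=0$, and since $n\ge 3$ we conclude $m=m'$. The main obstacle in this argument is ensuring the simultaneous general position of the entire collection $\varphi(\calS')$ so that \Cref{thm: notwistregions} can be applied component-wise; but this is precisely the content of \Cref{removing unwanted intersections}, whose isotopies are carried out on the union $\calS$ rather than on individual components, so no new issue arises.
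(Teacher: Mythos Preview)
Your proof is correct and follows the same approach as the paper: isotope $\varphi(\calS')$ to a disjoint collection of vertical spheres via \Cref{removing unwanted intersections}, \Cref{thm: notwistregions}, and \Cref{thm:Verticalspheres}, then deduce maximality and $m=m'$ from $n=n'$ together with the counting formula~\eqref{size of maximal collection}. The only cosmetic difference is in the maximality step: the paper uses the two-sided cardinality inequality (applying $\varphi$ gives $r'\le r$, applying $\varphi^{-1}$ to a maximal collection in $K$ gives $r\le r'$), whereas you pull back a single hypothetical extra sphere $T$. One small imprecision: your phrase ``isotopic (through the complement of $\calS'$)'' is not quite what \Cref{removing unwanted intersections} and \Cref{thm:Verticalspheres} provide---those isotopies act on the full collection and may move $\calS'$ as well---but applying them to the enlarged collection $\calS'\cup\{\varphi^{-1}(T)\}$ still yields $r'+1$ pairwise non-isotopic disjoint vertical spheres in $K'$, contradicting~\eqref{size of maximal collection}, so the argument goes through.
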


\begin{proof}
By \Cref{removing unwanted intersections} and \Cref{thm:Verticalspheres},
we may isotope $\varphi$ so that the image $\calS = \varphi(\calS')$ is a
collection of vertical 2-spheres. Note that any two vertical 2-spheres in 
$\calS$ are non-isotopic, so the number of spheres in $\calS$ 
is at most that of a maximal collection for $K'$. Similarly, applying 
$\varphi^{-1}$ to a maximal collection of vertical $2$-spheres in $K$ 
implies $r\leq r'$. Therefore, $r=r'$, and the collection
$\calS=\varphi(\calS')$ is maximal in $K$. Since $n=n'$ by 
\Cref{cor: verticaltovertical}, it follows that $m=m'$. 
\end{proof}

Up to flipping $K$ along a vertical axis of $P$, and applying a (layer-preserving) 
isotopy to the diagrams $K,K'$, we may assume:
\begin{assumption}\label{assum:FlipVertical}
In addition to \Cref{ass: basic assumption}, assume that:
\begin{enumerate}
\item $K,K'$ are drawn on the same layers $\ell_0,\dots,\ell_n$.
\item The collection $S_1,\dots,S_n$ is a maximal collection of vertical spheres for 
both $K$ and $K'$; they are ordered from left to right, and intersect $P$ in vertical 
lines (except outside some large ball containing both $K$ and $K'$). 
\item For all $1\le j\le r$, we have $ \varphi(S_j) = S_j$ and $K\cap S_j = K'\cap S_j$.
\end{enumerate}
\end{assumption}

Note that while we do assume that $\varphi(S_j) = S_j$, we do not yet know that $\varphi$ 
is the identity on $S_j$. More so, we have $\varphi(K\cap S_j)=K'\cap S_j = K\cap S_j$ 
and so $\varphi$ induces a permutation on $K\cap S_j$ but we do not yet know that this
permutation is the identity.

\begin{definition}\label{def:VerticalSlab} \hfill 
\begin{enumerate}

\item For $1\le j \le  r$, let the points of intersection $K\cap S_j=K'\cap S_j$ 
be $x_{0,j},\dots,x_{n,j}$ ordered from top to bottom. See \Cref{fig:vertical slab} 
for reference. 

\item For $1\le j \le r$ and $1\le i\le  n$ let $\alpha_{i,j}$ denote the 
\emph{vertical arc} connecting $x_{i-1,j}$ and $x_{i,j}$ on $S_j\cap P$.

\item For $1\le j <r$, let $W_j$ denote the \emph{vertical slab} region 
between $S_j$ and $S_{j+1}$, homeomorphic to $S^2\times [-1,1]$. 

\item For $1\le j <r$, the vertical slab $W_j$ contains a unique twist region 
of $K$ (resp.\ of $K'$). Let $k_j$ (resp.\ $k_j'$) denote the number of crossings 
in this twist region.
Let $p_j$ (resp.\ $p_j'$) be the index of the layer $\ell_{p_j}$ (resp.\ $\ell_{p'_j}$) 
in which the twist region of $K\cap W_j$ (resp.\ $K'\cap W_j$) lies.

\item For $1\le j <r$, the intersection $K\cap W_j$ consists of $n+1$ arcs. Two of 
these arcs are \emph{twisted} and connect 
the points $x_{p_j-1,j},x_{p_j-1,j+1}$ to the points $x_{p_j,j},x_{p_j,j+1}$. 
All other arcs of $K\cap W_j$ are \emph{horizontal arcs} connecting $x_{i,j}$ and 
$x_{i,j+1}$ across $W_j$ for all $i\ne p_j,p_j-1$. Similarly we define 
the twisted and horizontal arcs for $K'\cap W_j$.
\end{enumerate}
\end{definition}

\begin{figure}
\centering
\bigskip
\begin{overpic}[height = 5cm]{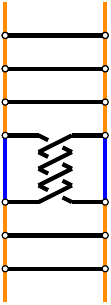}
        \put(13,103){$W_j$}
        
        \put(-2,103){\color{orange}$S_j$}
        \put(30,103){\color{orange}$S_{j+1}$}
        
        \put(-12,88)
        {$x_{0,j}$}
        \put(37,88)
        {$x_{0,j+1}$}
        
        \put(-12,78)
        {$x_{1,j}$}
        \put(37,78)
        {$x_{1,j+1}$}
        
        \put(-12,68)
        {$x_{2,j}$}
        \put(37,68)
        {$x_{2,j+1}$}
        \put(-12,57)
        {$x_{3,j}$}
        \put(37,57)
        {$x_{3,j+1}$}

        \put(-16,44){\color{blue} $\alpha_{4,j}$}
        \put(41,44){\color{blue} $\alpha_{4,j+1}$}
        
        \put(-12,33)
        {$x_{4,j}$}
        \put(37,33)
        {$x_{4,j+1}$}
        
        \put(-12,22)
        {$x_{5,j}$}
        \put(37,22)
        {$x_{5,j+1}$}
        \put(-12,10)
        {$x_{6,j}$}
        \put(37,10)
        {$x_{6,j+1}$}
\end{overpic}
\caption{The vertical slab $W_j$ between $S_j$ and $S_{j+1}$. The unique twist 
box of $K$ in $W_j$ has 4 crossings --- so $k_j = 4$ --- and is located at the 
layer $\ell_4$ --- so $p_j = 4$. The vertical arcs $\alpha_{4,j}$ and $\alpha_{4,j+1}$ 
are shown in blue.} 
\label{fig:vertical slab}
\end{figure}

\begin{lemma}\label{lem: twists between consecutive} For $K$, $K'$ satisfying the conditions of 
\Cref{assum:FlipVertical} we have:
\begin{enumerate}

\item For all $1\le j <r$, $\varphi$ maps the twisted pair of arcs in $K'\cap W_j$ 
to the twisted pair in $K\cap W_j$.

\item  For all $1\le j <r$, the number of crossings $k_j = k_j'$.

\item Up to an isotopy, for all $1\le j \le r$, 

\begin{enumerate}[label = (3\alph*)]
    \item  $\varphi(\alpha_{p'_j,j})=\alpha_{p_j,j}$ (possibly with reverse 
    orientation), and 
    
    \item the map $\varphi$ maps the horizontal segments connecting the endpoints of the 
    twisted pair in $K'\cap W$ to the horizontal segments connecting the endpoints 
    of the twisted pair in $K\cap W$. That is, it maps the horizontal segments 
    $[x_{p'_j-1,j}, x_{p'_j-1,j+1}]$ and $[x_{p'_j,j},x_{p'_j,j+1}]$ to the
    horizontal segments $[x_{p_j-1,j}, x_{p_j-1,j+1}]$ and $[x_{p_j,j},x_{p_j,j+1}]$ 
    (possibly in different order).
\end{enumerate}
\end{enumerate}
\end{lemma}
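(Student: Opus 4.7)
The plan is to prove (1), (2), (3) in sequence. For (1), I would topologically characterize the twisted pair of $K\cap W_j$ as the unique pair of arcs which, considered as a $2$-sub-tangle in isolation from the other arcs, forms a non-trivial $2$-tangle inside $W_j\cong S^2\times I$. Each horizontal arc is ambient isotopic in $W_j$ to a product arc $\{\mathrm{pt}\}\times I$; and a single twisted arc, considered without its partner, is likewise isotopic to a product arc, since any two arcs with fixed endpoints in the simply connected $W_j$ are ambient isotopic. Hence any pair containing a horizontal arc is isotopic to two disjoint product arcs (the trivial $2$-tangle), while the twisted pair jointly forms a non-trivial integer $k_j$-tangle with $k_j\ge 4$ twists. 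Since $\varphi|_{W_j}$ is a homeomorphism of pairs $(W_j, K'\cap W_j) \to (W_j, K\cap W_j)$ taking arcs to arcs bijectively, it preserves this distinguished non-trivial sub-pair, so $\varphi$ sends the twisted pair of $K'$ to the twisted pair of $K$.

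For (2), once (1) is known, $\varphi$ restricts to an orientation-preserving ambient isotopy of $W_j$ between the integer $k'_j$-tangle and the integer $k_j$-tangle. Closing each tangle inside $S^3$ by adding a bridge arc on each of $S_j$ and $S_{j+1}$ produces a $(2,k)$-torus link (or torus knot, depending on parity). Choosing the bridge arcs used to close the $K$-tangle to be the $\varphi$-images of the bridge arcs chosen for the $K'$-tangle, the map $\varphi$ extends to an ambient isotopy of $S^3$ taking the $(2,k'_j)$-torus link to the $(2,k_j)$-torus link. Torus links are classified up to ambient isotopy by $|k|$, so $|k_j|=|k'_j|$, and as both are positive integers this yields $k_j=k'_j$.

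For (3), by (1) and (2) the restriction $\varphi|_{S_j}$ sends $\{x_{p'_j-1,j}, x_{p'_j,j}\}$ onto $\{x_{p_j-1,j}, x_{p_j,j}\}$ (possibly swapping them), and similarly on $S_{j+1}$. The image $\varphi(\alpha_{p'_j,j})$ is then an arc on $S_j$ with the same two endpoints as $\alpha_{p_j,j}$; I construct a small ambient isotopy of $S^3$, supported in a collar of $S_j$ and fixing $K$ together with the other marked points on $S_j$, that moves $\varphi(\alpha_{p'_j,j})$ onto $\alpha_{p_j,j}$. Post-composing $\varphi$ with this isotopy yields (3a). An analogous local isotopy supported near the horizontal edges of the twist region of $K$ in $W_j$ gives (3b), with the ``different order'' possibility arising precisely when $\varphi$ swaps the two endpoints of the twisted pair on $S_j$.

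The most delicate step will be (1), specifically justifying that any pair consisting of one horizontal and one twisted arc forms a trivial $2$-sub-tangle; this requires showing that after discarding the partner twisted arc the remaining single twisted arc unknots in $W_j$ and can then be isotoped off any prescribed horizontal arc. A secondary subtlety appears in (3), where one must verify that the local isotopies on $S_j$ and near the twist region extend to ambient isotopies of $S^3$ preserving $K$; this follows from the trivial tangle structure in the balls bounded by $S_j$ and $S_{j+1}$ together with the fact that $\varphi$ already matches the combinatorial structure after (1) and (2).
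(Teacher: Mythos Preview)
Your arguments for (1) and (2) are essentially the paper's, recast in the language of 2-sub-tangles rather than their torus-link closures; this is fine. (A small remark on (2): if $k_j$ is meant as a \emph{signed} crossing count, ``both positive'' is not available; one instead uses that $\varphi$ is orientation-preserving, so it cannot send a $(2,k)$-torus link to its mirror $(2,-k)$.)

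The genuine gap is in (3a). You claim that since $\varphi(\alpha_{p'_j,j})$ and $\alpha_{p_j,j}$ are arcs on $S_j$ with the same pair of endpoints, a small ambient isotopy supported in a collar of $S_j$ and fixing $K$ carries one to the other. But $S_j$ meets $K$ in $n+1\ge 4$ points, and on a sphere with four or more marked points there are infinitely many isotopy classes of arcs joining two given marked points rel the others. Any ambient isotopy fixing $K$ restricts on $S_j$ to an isotopy fixing all the marked points, so it cannot change that isotopy class. Nothing in (1) and (2) rules out, for instance, that $\varphi|_{S_j}$ differs from the expected map by a Dehn twist about a curve on $S_j$ separating $\{x_{p_j-1,j},x_{p_j,j}\}$ from the other punctures; such a twist sends $\alpha_{p_j,j}$ to a non-isotopic arc with the same endpoints. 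Your appeal to ``the trivial tangle structure in the balls bounded by $S_j$ and $S_{j+1}$'' does not help: the tangle in the slab $W_j$ is not trivial (it contains the twist region), nor is the tangle on the far side of $S_j$.

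The paper fills this gap with a three-dimensional argument that has no analogue in your sketch. One encloses the twisted pair in a ball $B$ whose boundary decomposes as $D^-\cup A\cup D^+$, where $D^\pm$ are small disk neighborhoods of $\alpha^\pm$ in $S_j,S_{j+1}$ and $A$ is a connecting annulus in the interior of $W_j\setminus K$. The rational tangle $(B,a\cup b)$ has a \emph{unique} nontrivial compressing disk, of slope $1/k_j$, whose boundary therefore meets $A$ essentially in at least $|k_j|+1\ge 5$ points. Consequently any disk whose boundary meets $A$ in a closed curve or in at most two arcs is parallel into $\partial B$, and this lets one remove the intersections of $\varphi(A')$ with $A$ by innermost-curve and innermost-pair arguments until $\varphi(D'^-)=D^-$, whence $\varphi(\alpha'^-)=\alpha^-$. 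The same compressing-disk uniqueness then characterizes the horizontal segments for (3b) as the arcs on $\partial B$ meeting the compressing disk boundary exactly once. Without this rigidity input from the rational tangle, (3) does not follow from (1) and (2).
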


\begin{proof}
Let $1\le j <r$.  
Let us suppress the index $j$ by denoting
\begin{itemize} 
\item $W := W_j$;
\item $S^- := S_j$ and $S^+ := S_{j+1}$;
\item $k= k_j$, $k'=k_j'$, $p=p_j$, $p'=p'_j$;
\item $\alpha^- =  \alpha_{p,j}$ and $\alpha^+=\alpha_{p,j+1}$;
\item $\alpha'^- = \alpha_{p',j}$ and $\alpha'^+=\alpha_{p',j+1}$;
\end{itemize}
Finally, let $a,b$ (resp.\ $a',b'$) be the twisted pair of arcs in $K\cap W$ 
(resp.\ $K'\cap W$).

\vskip7pt
(1) The arcs $a',b'$ are connected to each other by $\alpha^-$
on $S^-$ and $\alpha^+$ on $S'^+$. The union of the arcs $a'\cup b' \cup
\alpha'^-\cup \alpha'^+$ is a $(2,k')$ torus knot or link. 

Moreover, the pair of strands $a',b'$ is the only pair of arcs in $K'\cap W$ 
for which this happens: Closing any other pair of arcs of $K'\cap W$ by an arc 
in $S^-$ and an arc in $S^+$ to form a closed knot or link, will result in 
the unknot. This topological property is preserved by an ambient isotopy 
$\varphi$. Hence, $\varphi$ will map torus link to a torus link and unknots 
to unknots. This proves that $\varphi$ maps $a',b'$ to $a,b$.
\vskip7pt
(2) The torus knot $(2,k')$ discussed in (1) is mapped by $\varphi$ to the torus 
knot $(2,k)$. It follows that $k=k'$. 
\vskip7pt
(3)(a) Let $D'^-, D'^+$ be small disk 
neighborhoods of $\alpha'^-,\alpha'^+$ in $S'^-,S'^+$, respectively.
Let $A'$ be the annulus in the interior of $W \ssm K'$ whose boundary 
is $\partial A' =  \partial D'^-\cup \partial D'^+$. 
Similarly, define $D^{\pm}$ as disk neighborhoods of $\alpha^{\pm}$, 
and an annulus $A$ in the interior of $W\ssm K$ with boundary  
$\bdy A = \bdy D^-\cup \bdy D^+$.

\begin{figure}
\centering
\bigskip
\begin{overpic}[width=5cm]{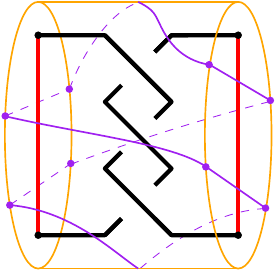}
        \put(3,45){\color{red} $\alpha^-$}
        \put(88,45){\color{red} $\alpha^+$}
        \put(50,100){\color{orange} $A$}
        \put(0,93){\color{orange} $D^-$}
        \put(92,93){\color{orange} $D^+$}
\end{overpic}
\caption{The tangle $(A\cup D^- \cup D^+, a\cup b)$, the vertical arcs
$\alpha^-,\alpha^+$ and the boundary of the only non-trivial compressing 
disk shown in purple.} 
\label{fig:compressing disk for tangle}
\end{figure}

Let $B'$ (resp.\ $B$) be the 3-ball bounded by the sphere $A' \cup D'^-\cup D'^+$  
(resp.\ $A \cup D^-\cup D^+$) in $W$. The tangle $(B', a'\cup b')$ 
(resp.\ $(B,a\cup b)$)  has only one non-trivial compressing disk, and 
its boundary is isotopic to the purple curve in
Figure~\ref{fig:compressing disk for tangle}. 
That is, it is a curve of slope $1/k$ on the corresponding $4$-punctured sphere. 
Thus this boundary curve has at least $|k|+1$ intersections with $A'$ (resp.\ $A$). 
In particular, it follows that any disk whose boundary intersects the 
annulus $A'$ in a closed curve, or in at most two arcs, is isotopic to 
a disk on $\partial B'=A' \cup D'^-\cup D'^+$. 
A similar statement holds for $A$. 

Without loss of generality,  we assume that $\varphi(A')$ and $A$ intersect 
transversely. Consider the intersection curves and arcs of $\varphi(A') \cap A$. 

Consider first inessential closed curves of $\varphi(A') \cap A$. An innermost 
such curve in $A$ bounds a compressing disk $E$ for $\varphi(A')$. By the
observation above, this disk is isotopic to a disk on $\varphi(A')$. Hence, 
innermost intersection curves can be removed by an isotopy.
The same argument can be applied to an innermost arc of $\varphi(A') \cap A$ 
that has both endpoints on the same boundary of $A$. It applies as well 
to an innermost pair of arcs whose endpoints are on different boundary 
components of $A$, for in these cases, the disk bounded by arcs of 
$\varphi(A')\cap A$ intersects $A$ in at most two arcs. 
By iterating this argument we conclude that $\varphi$ can be isotoped in $W$ so 
that $\varphi(A')$ and $A$ intersect only along curves which are parallel to 
the core curve of each annulus. At this point we already know that 
$\partial A \cap \varphi(\partial A')=\emptyset $. In particular, the disks 
$D^-$ and $\varphi(D')$ have disjoint boundaries. Note that by (2) the two 
punctures of $D'^-$ are sent by $\varphi$ to the two punctures of $D^-$, 
and so $D^-\cap \varphi(D'^-)\ne \emptyset $. It follows that either 
$D^- \subseteq \varphi(D'^-)$ or $D^- \supseteq \varphi(D'^-)$. 
Each disk contains only one non-trivial arc, namely $\alpha^- \subseteq D^-$ 
and $\alpha'^- \subseteq D'^-$. It follows that up to an isotopy we may 
assume that $\varphi(D'^-)=D^-$ and $\varphi(\alpha'^-) = \alpha^-$. 
Similarly, for $D^+$ and $\varphi(D'^+)$, we get that up to an isotopy 
$\varphi(D'^+) = D^+$ and $\varphi(\alpha'^+) = \alpha^+$. This proves (3a).

We now have $\partial A = \varphi(\partial A')$ and any other intersection 
curve is parallel to these curves on $A$ and $\varphi(A')$.
Suppose $c\subseteq A\cap \varphi(A')$ is such an intersection that is not in $\partial A$.
Let $U$ be an annulus in $A$ such that $U\cap \varphi(A') = \partial U$. Then there 
exists an annulus $U'\subseteq \varphi(A')$ such that $\partial U = \partial U' = U\cap U'$. 
Their union $U\cup U'$ forms a torus in $W_j \ssm K$. Since there are no incompressible 
tori in $W_j\ssm K$ it follows that up to an isotopy we can isotope one annulus 
through the other and remove the intersection $c$. Iterating this we get 
that $A\cap \varphi(A') = \partial A = \varphi(\partial A')$. 
Their union is a torus, and using an isotopy we may assume that 
$A=\varphi(A')$, and so also the ball $B = \varphi(B')$.

Consider the horizontal arcs $[x_{p'-1,j},x_{p'-1,j}]$ and $[x_{p',j},x_{p',j+1}]$. 
Isotope them so that they lie on $\partial B'$. 
The arcs obtained are the only arcs on $\partial B'$ that intersect once the boundary 
of the compressing disk (see \Cref{fig:compressing disk for tangle}).
The same holds for their image under $\varphi$. This proves (3b).
\end{proof}

\begin{lemma}\label{lem: permutations}
For $K$, $K'$ satisfying the conditions of \Cref{assum:FlipVertical}, up to an 
isotopy, and up to a  possible rotation in a horizontal axis, 
$\varphi(\alpha_{i,j}) = \alpha_{i,j}$ for all $1\le i\le n$, and $1\le j\le r$. 
\end{lemma}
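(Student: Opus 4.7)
The plan is to proceed in three stages. First, I establish that the combinatorial data $p_j$ and $p'_j$ agree for every $j$; second, I analyze the permutation that $\varphi$ induces on the punctures of each $S_j$ and show that it is either the identity or the order-reversal; third, I upgrade the permutation data to a setwise identification of every vertical arc.

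For the first stage, note that by \Cref{assum:FlipVertical} the spheres $S_1,\dots,S_r$ form a maximal collection for both $K$ and $K'$, and both plats have the same width and height and are in standard form. Hence the combinatorial positions of twist regions (even or odd level, even or odd column) are identical in the two diagrams, and each $S_j=S(c_1,\dots,c_n)$ has the same $c$-vector with respect to either plat. The level $p_j$ at which the twist region of $W_j$ sits is the unique index where the $c$-vector of $S_{j+1}$ differs from that of $S_j$ by $+1$, and therefore $p_j=p'_j$ for all $j$. Applying \Cref{lem: twists between consecutive}(3a) to each slab in turn, I isotope $\varphi$ so that $\varphi(\alpha_{p_j,j})=\alpha_{p_j,j}$ and $\varphi(\alpha_{p_j,j+1})=\alpha_{p_j,j+1}$ for every $1\le j\le r-1$. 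This pins down, for each interior sphere, two distinct vertical arcs setwise.

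In the second stage, let $\pi_j$ denote the permutation of $\{0,\dots,n\}$ that $\varphi$ induces on the punctures $\{x_{0,j},\dots,x_{n,j}\}$ of $S_j$. Stage 1 forces $\pi_j$ to preserve the pairs $\{p_{j-1}-1,p_{j-1}\}$ and $\{p_j-1,p_j\}$ for interior $j$, and to preserve the single relevant pair when $j\in\{1,r\}$. The top and bottom cap patterns of the plat provide global constraints: each cap is a prescribed collection of short arcs pairing consecutive endpoints, and $\varphi$ must carry $K'$'s cap arcs to $K$'s. Combining the cap pairings with the horizontal-segment data from \Cref{lem: twists between consecutive}(3b) across adjacent spheres, I argue that $\pi_j$ must be either the identity or the reversal $i\mapsto n-i$, and that the choice is consistent across all $j$.

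In the third stage, if $\pi_j$ is the reversal I apply a horizontal rotation of angle $\pi$, which swaps top and bottom and sends $x_{i,j}$ to $x_{n-i,j}$; this reduces to the identity case. Then $\varphi|_{S_j}$ fixes every puncture $x_{i,j}$. Since $\{\alpha_{1,j},\dots,\alpha_{n,j}\}$ together with the capping arc of $\gamma_j = S_j\cap P$ forms a cut system of disjoint arcs on the punctured sphere $(S_j,\{x_{i,j}\})$, a standard mapping-class-group argument shows that $\varphi|_{S_j}$ is isotopic rel punctures to a homeomorphism that fixes every $\alpha_{i,j}$ setwise. Extending these isotopies through collar neighborhoods of the $S_j$ assembles into a global isotopy of $\varphi$ yielding $\varphi(\alpha_{i,j})=\alpha_{i,j}$ for all admissible $i,j$.

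The main obstacle is Stage 2, ruling out \emph{intermediate} permutations $\pi_j$ that neither fix all endpoints nor globally reverse them. The slab constraints from Stage 1 are local and only give preservation of specific pairs; it is the top and bottom cap structure, propagated across all spheres, that must exclude the intermediate cases. The outermost spheres $S_1$ and $S_r$, which possess only one adjacent slab, will require particular care and heavier reliance on the cap combinatorics.
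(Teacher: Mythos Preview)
Your Stage 3 contains a genuine error. Knowing that $\varphi|_{S_j}$ fixes every puncture $x_{i,j}$ does \emph{not} imply it is isotopic rel punctures to a map fixing the arcs $\alpha_{i,j}$: the pure mapping class group of an $(n{+}1)$-punctured sphere is nontrivial for $n\ge 2$ (for instance, a Dehn twist about a curve enclosing two of the punctures fixes every puncture but carries the cut system $\{\alpha_{i,j}\}$ to a different one). So the ``standard mapping-class-group argument'' you invoke does not exist, and the passage from puncture permutations $\pi_j$ to arc identifications collapses. Your Stage 2 is also incomplete, as you acknowledge; note in particular that the cap arcs lie outside the region between $S_1$ and $S_r$ and impose no direct constraint on $\pi_j$ for interior $j$.

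The paper avoids both issues by never passing through puncture permutations. Instead it proves directly that each $\varphi(\alpha_{i,j})$ is isotopic to some vertical arc $\alpha_{q,j}$, via a propagation argument across slabs: given $\varphi(\alpha_{i,j})=\alpha_{q,j}$, consider the rectangle $Q'\subset P$ with sides $\alpha_{i,j}$, $\alpha_{i,j+1}$, and the two horizontal segments across $W_j$. Since $Q'$ bounds a disk in $W_j\ssm K'$, its image $\varphi(Q')$ bounds a disk in $W_j\ssm K$; three of the four sides of $\varphi(Q')$ are already identified (via \Cref{lem: twists between consecutive}(3b) when the horizontal segments border the twist box, and via the horizontal arcs of $K$ otherwise), which forces the fourth side $\varphi(\alpha_{i,j+1})$ to be isotopic to $\alpha_{q,j+1}$. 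Iterating over all slabs yields a single permutation $\pi$ with $\varphi(\alpha_{i,j})=\alpha_{\pi(i),j}$ for all $i,j$; then the connectedness of $\alpha_{1,j}\cup\dots\cup\alpha_{n,j}$ immediately forces $\pi=\id$ or $\pi\colon i\mapsto n-i$. Your Stage~1 shortcut $p_j=p'_j$ (both plats being in standard form on the same layers) is correct, but it does not substitute for this disk-propagation step.
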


\begin{proof}
\textbf{Step 1.} Up to an isotopy, for any layer $\ell_i$, $1\le i\le n$, there exist indices 
$1\le j(i)\le r$, and $1\le q(i)\le n$ such that $\varphi$ maps the arc $\alpha_{i,j(i)}$ 
in the $j(i)$-th slab to the arc $\alpha_{q(i),j(i)}$ still in the $j(i)$-th slab, as follows:

Let $1\le i\le n$. Consider the twist boxes of $K'$ in the layer $\ell_i$. Each lies in 
some slab. That is, there exist indices $1\le j<r$ such that $i=p'_j$ using the terminology 
of \Cref{def:VerticalSlab}(4). Set $j(i)$ to be the first $j$ for which $i=p'_j$, and set 
$q(i) = p_{j(i)}$. By \Cref{lem: twists between consecutive}(3a), 
$\varphi(\alpha_{i,j(i)})=\alpha_{q(i),j(i)}$.

\vskip7pt

\textbf{Step 2.} 
Up to an isotopy, there exists a permutation $\pi\from\{1,\dots,n\}\to \{1,\dots,n\}$ 
such that for any layer $\ell_i$, $1\le i\le n$ and any slab $W_j$, $1\le j\le r$, 
the map $\varphi$ sends the arc $\alpha_{i,j}$ to the arc $\alpha_{\pi(i),j}$. 
That is, the result holds for all arcs $\alpha_{i,j}$, not just those adjacent to 
twist boxes, and the permutation $\pi(i)$ is independent of the slab $j$:

By the previous step, for all $1\le i \le n$ there exists $1\le j=j(i)\le r$ and 
$1\le q(i)\le n$ such that $\varphi(\alpha_{i,j})=\alpha_{q(i),j}$. 
To prove step 2, we show that $q:=q(i)$ is independent of the second index $j$. 
We first show that if $\varphi(\alpha_{i,j}) = \alpha_{q,j}$ and $j<r$ then 
$\varphi(\alpha_{i,j+1})=\alpha_{q,j+1}$.

Consider the rectangle $Q'$ on $P$ with vertices $x_{i-1,j},\;x_{i,j},\;x_{i,j+1},\; x_{i-1,j+1}$.
It consists of two vertical arcs $\alpha_{i,j},\alpha_{i,j+1}$ and two horizontal arcs that go 
across the slab $W_j$. By assumption the vertical arc $\varphi(\alpha_{i,j})=\alpha_{q,j}$.
Each of the horizontal arcs either lies on $K'$ or is the top or bottom boundary of the unique 
twist box in $W_j$. In both cases, the horizontal arcs of $Q'$ are mapped to the horizontal 
arcs $[x_{q-1,j},x_{q-1,j+1}]$ and $[x_{q,j},x_{q,j+1}]$ across the slab $W_j$ (in the latter 
case, this follows by \Cref{lem: twists between consecutive}(3b)).

The previous paragraph shows that the image $\varphi(Q')$ is made up of the 
vertical arc $\alpha_{q,j}$, the horizontal arcs  $[x_{q-1,j},x_{q-1,j+1}]$ and
$[x_{q,j},x_{q,j+1}]$, and some arc $\alpha := 
\varphi(\alpha_{i,j+1})\subset S_{j+1}$.  Since $Q'$ bounds a disk in $W_j\ssm K'$, 
the image $\varphi(Q')$ bounds a disk in $W_j \ssm K$. It follows that $\alpha$ must 
be isotopic to the vertical arc $\alpha_{q,j+1}$, as this is the only arc with 
this property.

Similarly, one shows that if $j>0$ and $\varphi (\alpha_{i,j}) = \alpha_{q,j}$ 
then $\varphi(\alpha_{i,j-1})=\alpha_{q,j-1}$.  By induction it follows that for 
all $1\le j \le r$, $\varphi(\alpha_{i,j})=\alpha_{q,j}$ for $q=q(i)$.
\vskip7pt
Set $\pi\from \{1,\dots,n\}\to \{1,\dots,n\}$ to be the permutation $\pi(i)=q (= q(i))$.

\vskip7pt

\textbf{Step 3.} Up to an isotopy and a possible 
rotation in a horizontal axis, $\pi = \id$:

The union of the vertical arcs in order $\alpha_{1,j} \cup \dots \cup \alpha_{n,j}$ 
forms a connected arc in the vertical 2-sphere $S_j$. Hence the image 
under $\varphi$ also forms a connected arc comprised of vertical arcs. By the 
previous step, this image is $\alpha_{\pi(1),j} \cup \dots \cup \alpha_{\pi(n),j}$. 
It follows that either $\pi = \id$ or the involution $i\mapsto n-i$.
Therefore after possibly applying a rotation in a 
horizontal axis, we may assume that $\pi=\id$.
\end{proof}

After perhaps applying a rotation in a horizontal axis we may assume 
from now on:
\begin{assumption}\label{Assump:FlipHorizontal}
In addition to \Cref{assum:FlipVertical} assume that $\varphi(\alpha_{i,j})=\alpha_{i,j}$.
\end{assumption}

\begin{lemma}
With \Cref{Assump:FlipHorizontal},
    the part of the diagram of $K'$ between $S_1$ and $S_r$ and the part of 
    the diagram of $K$ in the closed region between $S_1$ and $S_r$ are 
    identical (up to an isotopy of the diagram fixing the vertical spheres and 
    preserving the layers).
\end{lemma}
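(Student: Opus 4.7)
The plan is to argue that, once the combinatorial data of the two plats is shown to agree slab by slab, the diagrams themselves coincide up to a straightforward layer-preserving isotopy. By \Cref{lem: twists between consecutive}(2), the signed crossing counts satisfy $k_j = k'_j$ for each $1 \le j < r$. Combining Step 1 of the proof of \Cref{lem: permutations} with \Cref{Assump:FlipHorizontal} (which gives $\pi = \id$) shows in addition that $p_j = p'_j$: the twist regions of $K$ and $K'$ in each slab $W_j$ lie in the same layer. Thus within each slab, both tangles decompose identically into $n-1$ horizontal arcs at levels $i \notin \{p_j-1, p_j\}$ joining $x_{i,j}$ to $x_{i,j+1}$, plus a pair of twisted arcs passing through a $k_j$-crossing twist box at layer $\ell_{p_j}$.

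Next I construct the isotopy on each slab $W_j$ layer by layer. For each level $i \notin \{p_j-1, p_j\}$, both $K \cap \ell_i \cap W_j$ and $K' \cap \ell_i \cap W_j$ are trivial arcs in $\ell_i \cap W_j$ with the same endpoints and disjoint from any twist region; an isotopy supported in $\ell_i \cap W_j$ carries one onto the standard horizontal segment. For the twist region at layer $\ell_{p_j}$, both $K$ and $K'$ restrict to the standard twist-box $2$-tangle with $k_j$ signed crossings between the same four boundary points on $S_j \cup S_{j+1}$; these agree as planar diagrams up to horizontal translation within $\ell_{p_j}$, and so are related by a layer-preserving isotopy of $\ell_{p_j} \cap W_j$ rel boundary. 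These layer-wise isotopies combine into an isotopy of $W_j$ fixing $\partial W_j$ pointwise and carrying $K' \cap W_j$ onto $K \cap W_j$. Since the slab-wise isotopies all fix $S_j \cup S_{j+1}$ pointwise, they glue seamlessly into a global isotopy of the region between $S_1$ and $S_r$, as required.

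The main obstacle is genuinely the earlier work culminating in $k_j = k'_j$ and $p_j = p'_j$. Once the combinatorial data is shown to match slab by slab, the statement reduces to the elementary observation that two standard-form plat diagrams with the same width, height, and twist parameters coincide up to layer-preserving isotopy. The \emph{sign} of $k_j$ is essential to this final matching, and was secured via the chirality of the $(2, k_j)$ torus link in \Cref{lem: twists between consecutive}(2); without this, one would only conclude $|k_j| = |k'_j|$ and the tangles in $\ell_{p_j} \cap W_j$ could be mirror images rather than isotopic.
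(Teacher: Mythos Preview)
Your proof is correct and follows essentially the same route as the paper: both derive $k_j=k'_j$ from \Cref{lem: twists between consecutive}(2) and then obtain $p_j=p'_j$ by combining the identity $\varphi(\alpha_{p'_j,j})=\alpha_{p_j,j}$ from \Cref{lem: twists between consecutive}(3a) with $\varphi(\alpha_{i,j})=\alpha_{i,j}$ from \Cref{Assump:FlipHorizontal}. A small note on citations: your reference to ``Step~1 of \Cref{lem: permutations}'' is slightly indirect, since Step~1 only establishes the relation at the particular index $j(i)$; the cleanest route (and what the paper does) is to invoke \Cref{Assump:FlipHorizontal} and \Cref{lem: twists between consecutive}(3a) directly for each~$j$. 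Your added paragraph spelling out the layer-by-layer isotopy and your remark on the role of chirality in securing the \emph{sign} of $k_j$ are both correct elaborations that the paper leaves implicit.
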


\begin{proof}
We already assumed that the vertical spheres of $K$ and $K'$ are identical. 
Consider the vertical slab $W_j$. The knot $K$ (resp.\ $K'$) has one twist 
region in $W_j$ at the layer $\ell_{p_j}$ (resp.\ $\ell_{p'_j}$) with $k_j$ 
(resp. $k'_j$) signed crossings. By \Cref{lem: twists between consecutive}, 
$k_j = k'_j$, and \Cref{lem: permutations}, $$\alpha_{p'_j,j}=\varphi(\alpha_{p'_j,j}) = 
\alpha_{p_j,j}.$$ Hence $p_j = p_j'$. This means that the unique twist region of 
$K$ in the slab $W_j$ is in the same layer and has the same number of  crossings 
as that of $K'$.
\end{proof}

\begin{assumption}\label{assumption: same middle part}
Assume in addition to \Cref{Assump:FlipHorizontal} that the diagrams of $K$ and $K'$ 
are identical in the region between  $S_1$ and $S_r$.
\end{assumption}

\begin{lemma}\label{cor: identity in the middle}
With \Cref{assumption: same middle part}, the homeomorphism $\varphi$ restricted 
to the closed region between $S_1$ and $S_r$ is (isotopic to) the identity map.
\end{lemma}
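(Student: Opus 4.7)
The plan is to isotope $\varphi$ to the identity in three stages: first on each vertical sphere $S_j$, then on each strand of $K$ inside the slabs, and finally on each slab $W_j$ itself.

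For the first stage, by \Cref{Assump:FlipHorizontal} the map $\varphi$ fixes each vertical arc $\alpha_{i,j}$ setwise. The union $\bigcup_{i=1}^n \alpha_{i,j}$ is a connected arc on $S_j$ passing through all $n+1$ punctures $x_{0,j},\dots,x_{n,j}$ in order, so $\varphi$ must fix each puncture, and after an isotopy rel punctures we may further assume that it is the identity on each $\alpha_{i,j}$. The complement of this spine in $S_j$ is a disjoint union of two open disks, and the Alexander trick then makes $\varphi|_{S_j}$ isotopic to the identity. For the second stage, every strand of $K \cap W_j$ has both endpoints on $\partial W_j$ — which are now fixed by $\varphi$ — and is determined up to isotopy in $W_j$ by those endpoints. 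Hence $\varphi$ preserves each strand setwise, and a further isotopy along each strand makes $\varphi$ the identity on $K \cap W_j$.

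For the third stage, it remains to show that a self-homeomorphism of $W_j$ fixing $\partial W_j \cup (K \cap W_j)$ pointwise is isotopic to the identity rel this data. I would cut $W_j$ along the horizontal disks separating successive layers (each such disk meets $K$ transversely in two points), thereby partitioning $W_j$ into $n+1$ sub-balls: for each $i \ne p_j$ the $i$-th sub-ball meets $K$ in a single unknotted horizontal arc, while the $p_j$-th sub-ball contains the integer twist tangle with $k_j$ crossings. The mapping class group of each such (ball, tangle) pair, fixing the boundary pointwise, is trivial — by the Alexander trick for the unknotted-arc balls, and because the exterior of an integer twist tangle in a ball is a solid torus whose meridian is pinned down by the boundary data already fixed by $\varphi$, for the twist-region ball.

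The main obstacle is arranging $\varphi$ to preserve the above horizontal partition of $W_j$ up to isotopy, so that the piecewise argument can be applied. This is an innermost-intersection argument using incompressibility and boundary-incompressibility of the partition disks in $W_j \ssm \NN(K)$, which follow from the $4$-highly-twisted hypothesis together with the super-incompressibility technology developed in Section~\ref{sec:superincompespheres} and the irreducibility of the knot complement. Once the partition is preserved by $\varphi$, assembling the Alexander-type isotopies in each sub-ball and gluing across slabs yields the desired global isotopy of $\varphi$ to the identity on the middle region.
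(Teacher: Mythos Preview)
Your first two stages are fine and the first matches the paper. The third stage has a genuine geometric gap. The slab $W_j$ is homeomorphic to $S^2\times I$, not a ball, so there are no properly embedded separating \emph{disks}: any disk with boundary on $\partial W_j$ lies on one boundary sphere and is boundary-parallel. The surfaces that actually separate successive strands of $K\cap W_j$ are product \emph{annuli} $\gamma\times I$ with one boundary circle on each of $S_j,S_{j+1}$; they miss $K$ entirely (so ``meets $K$ in two points'' is off), and cutting $S^2\times I$ along $n$ parallel such annuli yields two balls and $n-1$ solid tori, not $n+1$ balls. (Also, the exterior of a two-strand integer tangle in a ball is a genus-two handlebody, not a solid torus.) More seriously, the step you yourself flag as the ``main obstacle'' is not discharged: the machinery of \Cref{sec:superincompespheres} concerns $(n+1)$-punctured spheres in $(S^3,K)$, not annuli properly embedded in a single slab, and does not give $\varphi$-invariance of your partition.

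The paper sidesteps all of this by reusing work already done in the proof of \Cref{lem: twists between consecutive}(3), where an explicit innermost-curve argument showed that, up to isotopy, $\varphi$ preserves the single annulus $A$ surrounding the twist region and hence the ball $B=\varphi(B')$ containing the twist tangle. The self-homeomorphisms of the rational tangle $(B,B\cap K)$ are exactly rotations along its unique compressing disk; since that disk's boundary crosses $A$ essentially and $\varphi(A)=A$, the rotation must be trivial, so $\varphi|_B$ is isotopic to the identity rel $B\cap K$. Outside $B$ the slab is a product over a trivial tangle, so $\varphi$ is isotopic to the identity there for free. Thus one annulus, already known to be $\varphi$-invariant, replaces your many hypothetical surfaces and the unproved preservation step.
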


\begin{proof} 
By \Cref{Assump:FlipHorizontal}, $\varphi(\alpha_{i,j}) = \alpha_{i,j}$.
Thus, $\varphi$ fixes the arc $\alpha_{1,j}\cup \dots \cup \alpha_{n,j}$, 
and cutting $S_j$ along this arc yields a disk. It follows that $\varphi$ 
is isotopic to the identity map on $S_j$. Now, as we saw in the 
proof of \Cref{lem: twists between consecutive}, we may assume that $\varphi$ 
preserves the annulus $A\subseteq W_j$ around the unique twist region. 
The homeomorphism $\varphi$ restricted to $B$ induces a self homeomorphism of 
the tangle $(B,B\cap K)$. The only self homeomorphisms of a rational tangle are 
given by rotations along the compressing disk of the tangle. Since the boundary 
of the compressing disk intersects the annulus $A$ essentially, and $\varphi(A) = A$, 
we get that restriction of $\varphi$ to $B$ is isotopic to the identity in $B$ 
relative to $B\cap K$.

Because a vertical slab is a product away from the twist region, $\varphi$ is 
isotopic to the identity on each vertical slab.
\end{proof}

We summarize the results of this section with the following.
\begin{corollary}\label{cor: middle part}
    Let $K,K'$ be two $4$-highly twisted plat projections, 
    on the plane $P$, of widths $m,m'\ge 4$ and heights $n,n'$ respectively. 
    Assume that $K$ and $K'$ represent the same knot $\mathcal{K}$, and 
    let $\varphi\from (S^3,K')\to (S^3,K)$ be an ambient isotopy taking $K'$ to $K$. 
    Then, up to a horizontal and/or vertical rotation of the diagram $K$, an 
    isotopy of the diagram, and an ambient isotopy relative to $K$, we have:
    \begin{enumerate}
        \item $m=m'$, $n=n'$.
        \item There are spheres $S_L,S_R$ which are respectively the leftmost 
        and rightmost vertical spheres for both $K,K'$. 
        \item The isotopy $\varphi$ restricts to the identity on the region 
        between $S_L$ and $S_R$, and in particular the diagrams of $K$ and $K'$ 
        are identical there.
    \end{enumerate}
\end{corollary}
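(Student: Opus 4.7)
The plan is to directly assemble the results proved earlier in this section; the corollary is essentially a summary statement, and the work has already been done.

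For item (1): by \Cref{cor: verticaltovertical}, any vertical sphere in $K'$ is sent by $\varphi$ to a vertical sphere in $K$, and in particular $n=n'$. Then by \Cref{lem: maximal goes to maximal}, a maximal disjoint collection of vertical spheres is sent (after isotopy) to a maximal collection with the same cardinality; using the count in \eqref{size of maximal collection}, equality of the two sides forces $m=m'$.

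For item (2): choose a maximal disjoint collection $S_1,\dots,S_r$ of vertical spheres in $K'$, ordered left to right, and set $S_L=S_1$, $S_R=S_r$. By the above, $\varphi(S_1),\dots,\varphi(S_r)$ is, up to ambient isotopy, a maximal collection of vertical spheres for $K$, ordered either left-to-right or right-to-left. After possibly applying a rotation of $K$ in a vertical axis of $P$ to reverse this order, and then a further isotopy of $(S^3,K)$ matching the $j$-th sphere in one collection with the $j$-th in the other (using that vertical spheres are determined by their combinatorial position), we reach \Cref{assum:FlipVertical}. In particular the spheres $S_L,S_R$ are simultaneously the leftmost and rightmost vertical spheres for both diagrams.

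For item (3): apply \Cref{lem: permutations}, which says that after a further isotopy and possibly a rotation in a horizontal axis of $P$ we have $\varphi(\alpha_{i,j})=\alpha_{i,j}$ for every vertical arc; this is precisely \Cref{Assump:FlipHorizontal}. The lemma immediately preceding \Cref{assumption: same middle part} then identifies the two diagrams in the closed region between $S_L$ and $S_R$, and \Cref{cor: identity in the middle} gives that the restriction of $\varphi$ to this region is isotopic to the identity relative to $K$.

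The only real obstacle is bookkeeping: each step modifies $\varphi$ by post-composing with an ambient isotopy, and possibly by a rigid horizontal or vertical rotation of the diagram $K$. One must check that these modifications are of the type allowed in the statement (horizontal/vertical rotations plus isotopy of the diagram plus ambient isotopy relative to $K$) and that they compose consistently, so that the final identifications $\varphi(S_j)=S_j$, $\varphi(\alpha_{i,j})=\alpha_{i,j}$, and $\varphi|_{\text{middle}}\simeq\operatorname{id}$ all hold at the same time. This compatibility is transparent from the way each preceding lemma is stated (each assumes the conclusions of the previous ones), so the proof is essentially a concatenation of the statements above.
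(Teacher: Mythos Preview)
Your proposal is correct and takes essentially the same approach as the paper: the corollary is explicitly presented there as a summary of the section with no separate proof, and your argument assembles \Cref{cor: verticaltovertical}, \Cref{lem: maximal goes to maximal}, \Cref{assum:FlipVertical}, \Cref{lem: permutations}, the unnamed lemma, and \Cref{cor: identity in the middle} in exactly the intended order. Your bookkeeping paragraph about the compatibility of the successive modifications to $\varphi$ is a useful addition that makes the logic explicit.
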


\section{2-bridge knots and links}\label{sec:2bridge}

In this section, we focus on the leftmost part of the plat, but all the arguments 
in this section work verbatim for the rightmost part of the plat. 
By \Cref{cor: middle part} the homeomorphism $\varphi$ of 
$(S^3, \mathcal{K} )$ taking the plat diagram $K'$ to $K$ 
can be assumed to be the identity on the leftmost vertical sphere $S_L$.
Consider the ball $B_L$ in $(S^3, K')$ bounded on the left of $S_L$, and 
the tangle defined by the arcs $B_L\cap K'$. 

Define a 2-bridge knot or link $K_L'$ by completing the resulting 
tangle into a 2-bridge knot or link as follows: Connect the topmost puncture on 
$S_L$ to the bottommost with a trivial arc in the ball $S^3 \ssm B_L$ 
on the right of $S_L$, and then connect each pair of adjacent punctures, again 
by a trivial arc to the right of $S_L$. This procedure results in the Schubert 
normal form of a 2-bridge knot or link, as in Figure~\ref{fig:2bridge}.

\begin{figure}[!ht]
    \centering
    \begin{overpic}[height=7cm]{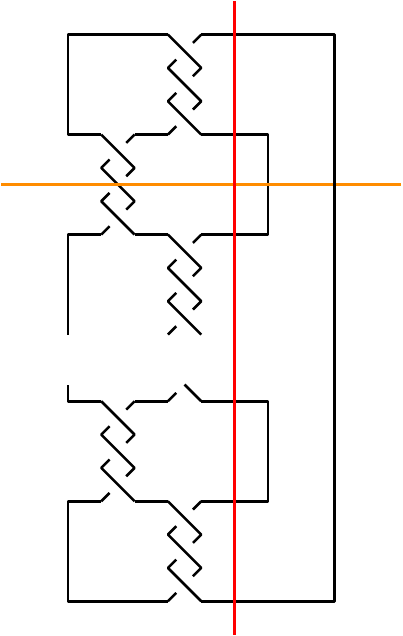}
        \put(17,85){$a'_{11}$}
        \put(-2,28){$a'_{(n-1)1}$}
        \put(17,13){$a'_{n1}$}
        \put(28,41){$\vdots$}
        \put(10,41){$\vdots$}
        \put(39,97){\color{red}$S_L$}
        \put(60,73){\color{orange}$h'$}
    \end{overpic}
    \caption{The completion of the left portion of the plat into a $2$-bridge knot or link.}
    \label{fig:2bridge}
\end{figure}

Perform the same operation on the corresponding tangle in $(S^3,K)$. 
One obtains another $2$-bridge knot or link $K_L$. 
We call $K'_L$ and  $K_L$ the left $2$-bridge knots or links. 

Let $\varphi':(S^3,K_L') \to (S^3,K_L)$ be the homeomorphism that is equal 
to $\varphi$ on $B_L$ and is the identity outside $B_L$.

\subsection{Continued fractions and 2-bridge knots and links.}
For integers $a_0,\dots,a_n$, let $[a_0;a_1,\dots,a_n]$ denote the continued fraction
expansion of the rational number $r$:
\[ r = a_0 + \frac{1}{a_1 + \frac{1}{\ddots\;+\frac1{a_n}}} \]

\begin{lemma}\label{uniqueness of continued fractions}
Let $[a_0;a_1,a_2,\dots,a_n] = [b_0;b_1,\dots,b_m]$ be two continued 
fractions expansions of the same rational number $r$, and assume $|a_i|\ge 3$ 
and $|b_j|\ge 3$ for all $i,j$. Then, $n=m$ and $a_i = b_i$ for all $1\le i\le n$.
\end{lemma}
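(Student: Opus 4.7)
The plan is to show that the condition $|a_i|\ge 3$ forces each coefficient to be recovered uniquely by a nearest-integer operation on the corresponding tail. First, I would introduce the tails $r_k := [a_k; a_{k+1},\dots,a_n]$, so that $r_0 = r$ and $r_k = a_k + 1/r_{k+1}$ for $0\le k < n$; similarly set $s_k := [b_k; b_{k+1},\dots,b_m]$.

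The quantitative key is the inequality $|r_k| > 2$ for every $1\le k\le n$, which I would prove by downward induction on $k$. The base case $k=n$ gives $|r_n|=|a_n|\ge 3 > 2$. For the inductive step, if $|r_{k+1}|>2$ then $|1/r_{k+1}|<1/2$, so by the reverse triangle inequality
\[
|r_k| \;=\; |a_k + 1/r_{k+1}| \;\ge\; |a_k| - |1/r_{k+1}| \;>\; 3 - \tfrac12 \;>\; 2.
\]
The analogous bound holds for the $s_k$.

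With this in hand, uniqueness follows by induction on $n+m$. If $n=0$, then $r = a_0 \in \mathbb{Z}$; if additionally $m\ge 1$, then $r - b_0 = 1/s_1$ is a nonzero number of modulus less than $1/2$, impossible for a difference of integers, so $m=0$ and $b_0 = a_0$. The case $m=0$ is symmetric. If $n,m\ge 1$, then both $r - a_0 = 1/r_1$ and $r - b_0 = 1/s_1$ lie in the open interval $(-\tfrac12,\tfrac12)$, which forces $a_0 = b_0$ as the unique nearest integer to $r$. Consequently $r_1 = s_1$, and the inductive hypothesis applied to the shorter expansions $[a_1; a_2,\dots,a_n] = [b_1; b_2,\dots,b_m]$ finishes the proof.

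I do not anticipate any serious obstacle. The constant $3$ in the hypothesis is precisely what propagates $|r_k|>2$ through the induction; with only $|a_i|\ge 2$ one would get $|r_k|\ge \tfrac32$, which is insufficient to isolate the nearest integer. The only minor care is the initial case when one of the expansions has length zero, handled cleanly above.
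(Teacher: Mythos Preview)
Your proof is correct and follows essentially the same approach as the paper's: both recover $a_0$ as the unique nearest integer to $r$ via the bound $|r-a_0|<\tfrac12$, then recurse on the tails. The only cosmetic difference is that the paper bounds $|[0;a_1,\dots,a_n]|$ by the infinite alternating continued fraction $[0;3,-3,3,\dots]=\tfrac{3-\sqrt5}{2}$, whereas your downward induction giving $|r_k|>2$ establishes the same inequality more directly.
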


\begin{proof}
First, we observe that if $|a_i|\ge 3$ then 
\begin{equation}\label{upperbound on cf}
|[0;a_1,\dots,a_n]| \le [0;3,-3,3,-3,\dots] = \tfrac{3 - \sqrt{5}}2<\tfrac12.
\end{equation}
    
We prove the lemma by induction on $\max\{m,n\}$. If $\max\{m,n\}=0$ then $r=a_0=b_0$. 
Now if $\max\{m,n\}>0$, since $r=a_0 + [0;a_1,\dots,a_n]$, it follows from 
\eqref{upperbound on cf} that $a_0=b_0$ since they are the (unique) closest integer 
to $r$. Now $$[a_1;a_2,\dots,a_n] =\tfrac{1}{r-a_0} = [b_1;b_2,\dots,b_m],$$ and 
by the induction hypothesis $m=n$ and $a_i=b_i$ for all $1\le i \le n$.
\end{proof}

\begin{corollary}\label{cor: unique}
The 3-highly twisted Schubert normal form of a 2-bridge knot is unique up 
to a rotation around the horizontal axis.
\end{corollary}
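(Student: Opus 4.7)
The plan is to combine the classical Schubert classification of 2-bridge knots with the uniqueness of continued fraction expansions supplied by \Cref{uniqueness of continued fractions}. Let $K_L$ and $K_L'$ be two $3$-highly twisted Schubert normal forms of the same 2-bridge knot or link, with twist parameters $a_1,\dots,a_n$ and $a_1',\dots,a_{n'}'$ respectively (using the notation of \Cref{fig:2bridge}). First, I would recall that each such Schubert normal form determines a rational number
\[ r = [a_1; a_2, \dots, a_n] \qquad\text{and}\qquad r' = [a_1'; a_2', \dots, a_{n'}'] \]
and that, after reducing to the canonical form $p/q$ with $0<q<p$ and $\gcd(p,q)=1$, Schubert's theorem asserts that $K_L$ and $K_L'$ are equivalent as unoriented links if and only if $p = p'$ and $q \equiv (q')^{\pm 1} \pmod{p}$.

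The geometric meaning of the two possibilities $q'\equiv q$ and $q'\equiv q^{-1} \pmod p$ is the content of the next step. The case $q'\equiv q$ means $r = r'$. The case $q'\equiv q^{-1} \pmod{p}$ corresponds, at the level of continued fractions in the given standard form, to reversing the string of partial quotients: $r' = [a_n; a_{n-1}, \dots, a_1]$. Geometrically, reversal of the continued fraction is realized on the diagram by a rotation of angle $\pi$ around a horizontal axis, which flips the tangle top-to-bottom while preserving the Schubert normal form.

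With these two cases in hand, I would conclude as follows. In the first case $r = r'$, both continued fractions represent the same rational and all partial quotients have absolute value $\ge 3$, so \Cref{uniqueness of continued fractions} gives $n = n'$ and $a_i = a_i'$ for all $i$; hence the two diagrams are identical. In the second case, apply the horizontal rotation to $K_L'$ to replace its continued fraction by $[a_{n'}'; \dots, a_1']$, reducing to the first case, and conclude again by \Cref{uniqueness of continued fractions}.

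The main obstacle, which I would address most carefully, is the precise identification of the horizontal-rotation symmetry of the Schubert diagram with the reversal symmetry of the continued fraction, and making sure that the Schubert invariant extracted from \Cref{fig:2bridge} really is the rational number built from the $a_i$'s in the stated way. Both are classical (see the discussion referenced from \cite{BleilerMoriah}), but some care with signs and with the parity of $n$ is needed so that the $3$-highly twisted hypothesis is preserved by reversal, enabling a clean appeal to \Cref{uniqueness of continued fractions}.
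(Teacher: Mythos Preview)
Your approach is essentially the same as the paper's: combine Schubert's classification of 2-bridge knots with \Cref{uniqueness of continued fractions}. The paper packages Schubert's theorem slightly differently, stating directly that the unordered pair $\{r,r'\}$ (where $r'$ is the continued fraction with reversed coefficients) is a complete invariant, rather than passing through the $q'\equiv q^{\pm 1}\pmod p$ formulation; and it uses the alternating-sign convention $r=[a_1,-a_2,\dots,-a_{n-1},a_n]$ for the rational associated to the Schubert form. Those are exactly the sign and reversal issues you flagged in your last paragraph, so once you pin down the convention your argument goes through verbatim.
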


\begin{proof}
Assume the Schubert's normal form of a 2-bridge knot has twist regions  
with corresponding coefficients  $a_1,\dots,a_n$.  

Let  $r=[a_1, -a_2,\dots,-a_{n-1}, a_n]$ 
and $r'=[a_n, -a_{n-1},\dots,-a_2,a_1]$ be the corresponding rational numbers. 
By Schubert~\cite{Schu} the unordered pair $\{r,r'\}$ is a (complete) 2-bridge knot
invariant. 
\Cref{uniqueness of continued fractions} implies
the numbers $a_1,\dots,a_n$ are determined by $r$, and $a_n,\dots,a_1$ are 
determined by $r'$. These expansions correspond to a rotation around the 
horizontal axis.
\end{proof}

Going back to our plats $K',K$, we have produced two isotopic 2-bridge knots 
$K_L',K_L$ which are 3-highly twisted (in fact 4-highly twisted) and in Schubert 
normal form. Hence we have the following corollary.

\begin{corollary}\label{Cor:Coeffs2Bridge}
Either $a_{i1}' = a_{i1}$ for $i=1, \dots, n$, or $a_{i1}' = a_{n-i, 1}$ for
$i=1, \dots, n$.  \qed
\end{corollary}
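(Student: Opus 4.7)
The plan is to reduce this directly to the uniqueness statement for 3-highly twisted 2-bridge knots established in \Cref{cor: unique}. Having set up the 2-bridge completions $K'_L$ and $K_L$ from the leftmost tangles bounded by $S_L$, the task splits into three verifications.

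First, I would check that $\varphi'\from (S^3, K'_L)\to(S^3, K_L)$ is a well-defined ambient isotopy of pairs. Because \Cref{cor: middle part} allows us to assume $\varphi$ restricts to the identity on $S_L$, and because the completion arcs to the right of $S_L$ are trivial arcs identical on both sides, gluing $\varphi|_{B_L}$ to the identity outside $B_L$ produces a homeomorphism of $S^3$ sending the completion of the left tangle of $K'$ to the completion of the left tangle of $K$. Thus $K'_L$ and $K_L$ are isotopic knots (or links) in $S^3$.

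Second, I would observe that $K'_L$ and $K_L$ are both in Schubert normal form, with coefficient sequences precisely $a'_{1,1}, a'_{2,1}, \dots, a'_{n,1}$ and $a_{1,1}, a_{2,1}, \dots, a_{n,1}$, respectively. This is because the completion procedure (connecting adjacent top punctures on $S_L$ by trivial arcs on the right) is exactly Schubert's construction applied to the leftmost column of the plat. Crucially, since the original plats are $4$-highly twisted, each coefficient has absolute value at least $4 \geq 3$, so both 2-bridge diagrams satisfy the hypothesis of \Cref{cor: unique}.

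Finally, I would apply \Cref{cor: unique} directly: since $K'_L$ and $K_L$ are isotopic 2-bridge knots both presented in $3$-highly twisted Schubert normal form, their coefficient sequences agree up to a rotation around the horizontal axis, i.e., either $a'_{i,1} = a_{i,1}$ for all $i$, or $a'_{i,1} = a_{n-i+1,1}$ for all $i$ (matching the reversal stated in the corollary, up to the indexing convention). I do not anticipate a significant obstacle: the work has already been done in establishing that $\varphi$ restricts to the identity on $S_L$ (\Cref{cor: middle part}) and in proving uniqueness for 2-bridge knots (\Cref{cor: unique}); the only subtlety is checking that the completion arcs can be chosen identically on both sides so that $\varphi'$ is genuinely an ambient isotopy of the completed pairs, which follows from the triviality of those arcs.
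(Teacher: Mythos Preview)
Your proposal is correct and follows exactly the paper's approach: the paper states this corollary immediately after \Cref{cor: unique} with only the sentence ``we have produced two isotopic 2-bridge knots $K_L',K_L$ which are 3-highly twisted (in fact 4-highly twisted) and in Schubert normal form'' as justification, and you have simply spelled out the three verifications underlying that sentence. Your observation about the indexing convention is also apt---the reversal should read $a'_{i,1}=a_{n+1-i,1}$ rather than $a_{n-i,1}$, as the latter gives an index of $0$ when $i=n$.
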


By \Cref{Cor:Coeffs2Bridge}, the isotopy $\varphi'\from (S^3,K'_L)\to (S^3,K_L)$ 
takes $K'_L$ to an isotopic $2$-bridge knot, isotopic to $K_L$, where the isotopy 
will either identify twist regions, or rotate the knot through a horizontal axis.
Because we already adjusted $K$ by rotating along a horizontal axis in
\Cref{Assump:FlipHorizontal}, we need to rule out any additional rotation. 
To do so, we will use the fact that $\varphi'$ fixes the sphere $S_L$ and 
the ball $S^3 \ssm B_L$ bounded by it. Any isotopy $\psi'\from  (S^3, \varphi'(K'_L))\to
(S^3, K_L)$ that rotates along a horizontal axis, inverting the diagram of
$\varphi'(K'_L)$, cannot restrict to the identity in $S^3\ssm B_L$. Thus to rule 
out inverting the coefficients, we show that there is an isotopy taking 
$\varphi'(K_L)$ to $K_L$ that is the identity on $S^3\ssm B_L$.

Consider the horizontal plane $h'$ (resp.\ $h$) in $S^3$ that is perpendicular 
to $P$ and crosses the knot $K'_L$ (resp.\ $K_L$) in the middle of
the second twist box, as in \Cref{fig:2bridge}.

\begin{lemma}\label{Lem:HorizontalBridgeSphere2Bridge}
Up to postcomposing $\varphi'$ with an isotopy supported in $B_L$, 
we have  $\varphi'(h')=h$.
\end{lemma}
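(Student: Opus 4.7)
The plan is to invoke the classical uniqueness of $2$-bridge spheres for $2$-bridge knots and links and then localize the resulting isotopy to $B_L$. First, both $\varphi'(h')$ and $h$ are identified as $2$-bridge spheres for $K_L$: each is a $2$-sphere (a horizontal plane capped off at infinity) meeting $K_L$ in exactly four points inside the second twist box, cutting $(S^3, K_L)$ into two rational $2$-string tangles, provided that the completion arcs outside $B_L$ are drawn so as not to cross the plane. Since $\varphi'\from(S^3, K'_L)\to(S^3, K_L)$ is a homeomorphism of pairs and $h'$ is a $2$-bridge sphere for $K'_L$, the image $\varphi'(h')$ is also a $2$-bridge sphere for $K_L$.

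By the classical uniqueness of $2$-bridge spheres for $2$-bridge knots and links~\cite{Schu}, there is an ambient isotopy of $(S^3, K_L)$ carrying $\varphi'(h')$ to $h$. The task is to arrange this isotopy to be supported in $B_L$. Since $\varphi'$ is the identity on $S^3\ssm B_L$, one has $\varphi'(h')\cap (S^3\ssm B_L) = h'\cap (S^3\ssm B_L)$. By choosing the horizontal planes $h$ and $h'$ at the same height within the common layer $\ell_2$ --- arranged by a preliminary layer-preserving diagram isotopy absorbed into $\varphi'$ --- we may assume $h' = h$ outside $B_L$. Then $\varphi'(h')$ and $h$ coincide on $S^3\ssm B_L$ and share the same boundary circle $c := h \cap S_L$ in $S_L$.

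It then remains to produce an isotopy inside $B_L$, rel $c$ and preserving $K_L\cap B_L$, taking $\varphi'(h')\cap B_L$ to $h\cap B_L$. Both disks have the same boundary $c$, meet $K_L$ in the same four points, and cut the rational tangle $(B_L, K_L\cap B_L)$ --- the left half of a $2$-bridge decomposition of $K_L$ --- into the same pair of rational subtangles. By the relative uniqueness of such decomposing disks in a rational tangle (in the spirit of Schubert's classification), the two disks are isotopic rel $\partial$ inside $B_L$. The main obstacle I anticipate is precisely this localization step: the isotopy from Schubert's global uniqueness theorem is not \emph{a priori} supported in $B_L$, so one must either modify it to be the identity outside $B_L$ --- exploiting that $(S^3\ssm B_L, K_L\cap(S^3\ssm B_L))$ is a trivial multi-string tangle admitting no essential surfaces beyond $S_L$ --- or argue directly, via the rational tangle structure, that decomposing disks in $B_L$ sharing a boundary on $S_L$ are unique up to isotopy rel boundary.
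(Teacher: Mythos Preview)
Your proposal correctly identifies the crux of the lemma---localizing Schubert's global uniqueness of $2$-bridge spheres to an isotopy supported in $B_L$---but then does not actually carry out that localization, and the sketch you give for it rests on several incorrect claims.

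First, $(B_L, K_L\cap B_L)$ is \emph{not} a rational tangle. The ball $B_L$ is bounded by the vertical sphere $S_L$, which meets $K_L$ in $n+1$ points (with $n\ge 3$ odd), not four; it is not one side of a $2$-bridge decomposition of $K_L$. Second, the two disks $\varphi'(h')\cap B_L$ and $h\cap B_L$ each meet $K_L$ in two points, not four, and there is no reason these are the \emph{same} two points: $\varphi'$ has moved the interior of $B_L$. Third, the assertion that the two disks ``cut the tangle into the same pair of subtangles'' is precisely what must be proved; invoking it is circular. So the appeal to ``relative uniqueness of decomposing disks in a rational tangle'' has no content here.

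The paper's proof does the work you omit. Writing $b=\varphi'(h')$, it puts $b$ and $h$ in general position inside $B_L$ and removes the simple closed curves of $b\cap h$ by a four-case innermost argument: curves bounding disks disjoint from $K_L$ (irreducibility); a curve with $D_h$ meeting $K_L$ twice and $D_b$ disjoint (ruled out by a surgery producing an essential sphere in a shorter $2$-bridge link complement); curves where both disks meet $K_L$ once (meridional annuli in a trivial tangle are boundary parallel); and parallel curves where both disks meet $K_L$ twice (no essential tori in a $2$-bridge complement). Once $b\cap h$ is reduced to the circle on $S_L$, the paper uses a boundary-compression disk $E$ for $D_b$ (via \cite{ScharlemannTomova2008Uniqueness}) and rules out the bad position of $\gamma_h$ by observing that the unique compression disk of the trivial $2$-tangle above $h'$ must cross $h'\cap S_L$ at least $2|a'_{1,1}|\ge 8$ times because of the twist region above $h'$---this is where the highly-twisted hypothesis enters. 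Only then can one conclude $D_b$ is parallel to $D_h$ rel $S_L$. None of these steps is supplied by your outline.
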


\begin{proof}
Let $b$ denote $\varphi'(h')$. 
Schubert~\cite{Schu} showed that all $2$-bridge spheres for a $2$-bridge 
knot or link are isotopic, implying that $b$ is isotopic to $h$.  
Here, we upgrade the proof of Schubert's theorem, to show
that in our case $\varphi'$ is in fact isotopic to the 
identity on $h'$.  Note that we already have $h=h'$ in $S^3\ssm B_L$. 
We will show that the isotopy $\varphi$ can be taken relative to $\bdy B_L = S_L$
so it fixes $S^3 \ssm B_L$ as well. 

We may assume that $b$ and $h$ intersect  transversely in $B_L$. Since 
$b\cap \bdy B_L$ is a component of $b\cap h$, all other intersections must 
be simple closed curves. An innermost component of $b\cap h$ on $h$ bounds 
a disk $D_h$ in $h\cap B_L$ and a disk $D_b$ in $b$. The union $D_h \cup D_b$
is a $2$-sphere in $B_L$. Note that $K_L$ can meet $D_h$ (and $D_b$) in exactly 
$0$, $1$, or $2$ points. 

\vskip7pt

\textbf{Step 1. } Suppose first that $K_L$ is disjoint from $D_h$ and $D_b$. As $
S^3\ssm K_L$ is irreducible, the sphere $D_h\cup D_b$ bounds a $3$-ball that 
is disjoint from $K_L$, and thus disjoint from $S_L$. Isotope $\varphi$ through 
this $3$-ball, while keeping $S^3 \ssm B_L$ fixed, to remove this intersection 
curve $\partial D_h = \partial D_b$. After iterating this procedure finitely 
many times, we may assume that there are no intersection curves of $h$ and $b$ 
that bound disks  disjoint from $K_L$ on the side disjoint from $S_L$.

\vskip7pt

\textbf{Step 2.} Suppose that $D_h$ meets $K_L$ exactly twice, and that $D_b$ is disjoint 
from $K_L$. We may assume that the interior of $D_b$ is disjoint from $h$, as follows:
We have removed all curves of intersection in $b\cap h$ bounding disks disjoint 
from $K_L$, so any intersections $D_b\cap h$ are parallel to $\bdy D_h$ in $h$. Then 
replace $D_b$ and $D_h$ with an innermost subdisk of $D_b$ and a larger subdisk of $h$ 
with the same boundary. So $D_b$ is on exactly one side of $h$, either above or below it,
disjoint from $K_L$, and $D_h$ meets $K_L$ exactly twice. 

\begin{figure}[!ht]
    \centering
    \begin{overpic}[height=5cm]{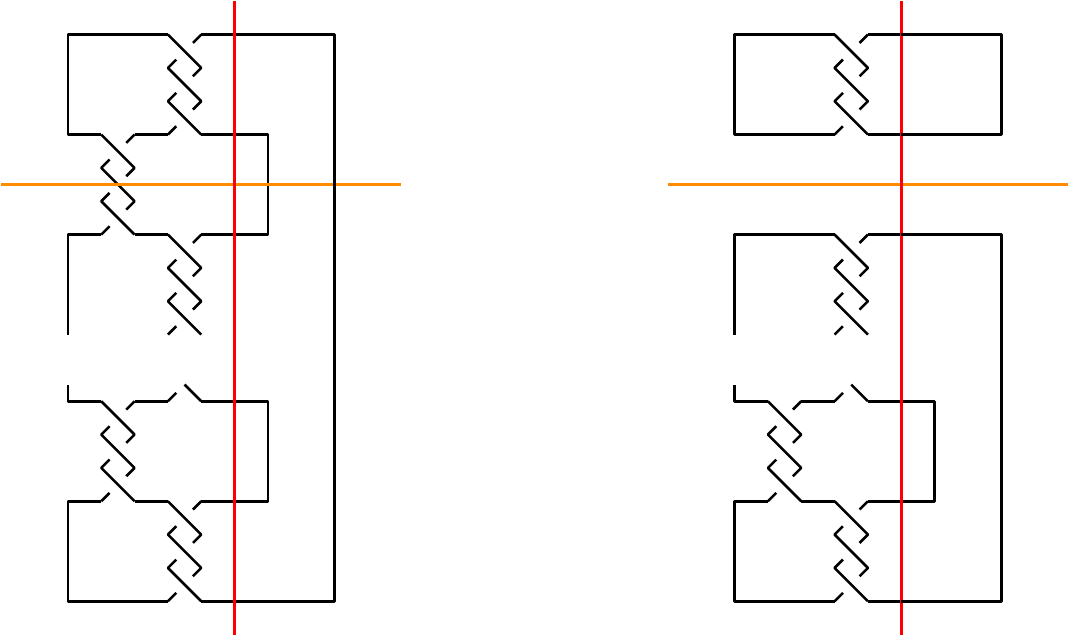}
        \put(6,24){$\vdots$}
        \put(17,24){$\vdots$}
        \put(68,24){$\vdots$}
        \put(79,24){$\vdots$}
        \put(38,40){$\xrightarrow{\text{cut \& reglue}}$}
    \end{overpic}
    \caption{Cut  the 2-bridge along $h$ and reglue into a torus knot/link 
    and a shorter 2-bridge knot.}
    \label{fig:2bridgeSurgery}
\end{figure}

A small regular neighborhood of  the disk $D_h$ contains two small arcs of $K_L$
running vertically from the ``bottom'' of the neighborhood to the ``top'' Thus it defines a 
trivial tangle that is a ball with two simple unknotted vertical strands running through it. 
Replace this trivial tangle by reconnecting the endpoints of the vertical strands: i.e.,  
by connecting them by two horizontal strands rather than vertical strands. Alternatively, 
this can be described as replacing the $1/0$ tangle formed by $(N(D_h),K_L\cap N(D_h))$ 
with a $0/1$ tangle in $N(D_h)$. Similarly, replace a thickened $N((S^3\ssm B_L)\cap h)$ 
by a tangle in which strands connect horizontally rather than vertically. This changes 
the diagram of the $2$-bridge knot/link into two diagrams of two $2$-bridge knots/links, 
one above $h$ and one below, as in \Cref{fig:2bridgeSurgery}. By choice of $h$, each 
of these diagrams contains at least one twist region as $n$ is odd and greater or equal 
to three. Each diagram is still in Schubert normal form, and remains $3$-highly twisted. 

Through the center of the neighborhood of $D_h$ there is now a new  horizontal disk, also 
denoted by $D_h$, that is disjoint from both new diagrams. The disk $D_b$ remains disjoint 
from the diagrams, and it is either above or below $h$. Consider the new $2$-bridge 
knot/link  lying on the same side of $h$ as $D_b$. The union of the two disks 
$D_b\cup D_h'$  forms an embedded $2$-sphere in the complement of the $2$-bridge 
knot/link determined  by the diagram lying on the side of $h$  containing $D_b$. 
This $2$-sphere cannot bound a ball, because it contains points of $K_L$ 
meeting the original disk $D_h$ on one side, and points of $K_L$ outside of $B_L$
on the other side. This is a contradiction: If the diagram has only one twist 
region, it is a torus knot/link, which is irreducible. If it has more than one 
twist region, it is hyperbolic, for example by ~\cite{LMP}, and thus is 
also irreducible.

\vskip7pt

\textbf{Step 3.} Suppose next that $K_L$ meets $D_h$ in exactly one point. Then it 
must also meet $D_b$ in exactly one point. If $D_b$ is not innermost, then it must 
contain curves of intersection of $D_b\cap h$ that encircle the single 
point of intersection $D_b\cap K_L$, and these curves again must encircle 
a single point of intersection of $K_L\cap h$. Thus we may replace $D_b$ by an
innermost curve on $D_b$, replacing $D_h$ by a disk with the same boundary. 
It follows that $D_b$ and $D_h$ will both meet $K_L$ exactly once, and the
interior of $D_b$ will lie either fully above $h$ or fully below $h$. 
Because $h$ is a bridge sphere, there is a homeomorphism $\psi$ taking the 
region above $h$ to a trivial 
tangle. The annulus $D_h \cup D_b \ssm K_L$ is sent to a meridional annulus 
$\psi(D_h\cup D_b) \ssm \psi(K_L)$ in the trivial tangle. Thus it must be boundary 
parallel. Using this annulus, we can isotope $D_b$ through the trivial tangle 
to remove the intersection curve $\alpha$. Similarly if $D_b$ lies below, 
we use $\psi'$ to isotope away the intersection curve $\alpha$. 
Repeating this procedure finitely many times, we may assume neither $D_h$ 
nor $D_b$ intersect $K_L$ exactly once. 

\vskip7pt

\textbf{Step 4.} The only remaining possibility is that $K_L$ meets $D_h$ exactly twice and 
$D_b$ exactly twice. By the previous steps, all components of $b\cap h$ in $B_L\cap h$ are 
parallel. There must be a pair of curves in $b\cap h$ which bound annuli 
$A_h\subset h$ and $A_b\subset b$ that are both innermost in $b$ and in $h$. The union 
$A_h\cup A_b$ is a torus in $S^3 \ssm K_L$. Since there are no essential tori in the 
complement of a $2$-bridge knot (see ~\cite{HatcherThurston1985Incompressible}), it must 
bound a solid torus that is disjoint from $K_L$. Use this solid torus to isotope $A_b$ 
through $A_h$, removing a pair of simple closed curves of $b\cap h$. Note that since the 
solid torus is disjoint from $K_L$ it is also disjoint from $S_L$, and so this isotopy 
fixes $S_L$. Repeating this a finite number of times, we remove all intersection 
components of $b\cap h$ in the interior of $B_L$. Thus $b\cap h$ consists of 
$b\cap h\cap S_L$ and the disk $h\cap S^3\ssm B_L$ punctured twice by $K_L$.

It remains to show that $D_h$ and $D_b$ are parallel in $B_L$ rel $S_L$. 
Let us consider $D_b$ as a properly embedded surface in the ball $A$ above $h$.
As in Step 2, $D_b \ssm  K$ is incompressible in $A \ssm K$.  Hence, 
by \cite[Lemma 3.6]{ScharlemannTomova2008Uniqueness}, $D_b \ssm K$ is 
$\partial$-compressible in $A\ssm K$. Let us emphasize that we remove $K$ and 
not $\NN(K)$ here, so the the boundary of $A\ssm K$ is the four punctured sphere 
$\partial A \ssm  K = h \ssm K$. Let $E$ be a boundary compression disk for 
$D_b \ssm  K$. Its boundary $\partial E$ is the union  $\gamma_h \cup \gamma_b$ 
of an arc $\gamma_b \subset D_b \ssm K$ and an arc 
$\gamma_h \subset \partial A \ssm K = h \ssm K$.
The arc $\gamma_b$ must be a non-trivial arc in $D_b \ssm K$, namely, an arc 
that separates the two punctures. The arc $\gamma_h$ is an arc on $h\ssm K$ 
whose endpoints are on $\partial D_b = \partial D_h = h\cap S_L$. There are two 
possibilities: $\gamma_h \subset D_h$ or $\gamma_h \subset h \ssm \overset{\circ}{D_h} $.

Let us first show that the latter case does not occur. Indeed, if 
$\gamma_h\subset h \ssm \overset{\circ}{D_h} =  b\ssm \overset{\circ}{D_b}$, 
then the disk $E$ is a compression disk for $b \ssm K$ in the trivial tangle 
above $b$. Its preimage $E':=\varphi'^{-1}(E)$ is a compression disk for $h'$ 
in the trivial tangle above it. In a trivial 2-tangle there is a unique 
compression disk. Since there is a twist region above $h'$ in $K'$, the 
boundary $\partial E'$ must intersect the circle $h'\cap S_L$ in more than 
two points (in fact, they intersect at least $2|a_{1,1}'|$ times). Therefore, its 
image under $\varphi'$, $\partial E$, intersects $\varphi'(h'\cap S_L)=h\cap S_L$ 
in more than two points. However, this contradicts the fact that the intersection 
$\partial E\cap S_L$ is exactly the two endpoints of $\gamma_b$.

Finally, if $\gamma_h \subset D_h$, then the disk $E$ is contained in $B_L$. 
The disk $E$ separates the ball bounded by $D_h \cup D_b$ into two balls. 
The knot $K$ meets the boundary of each of these balls in two points -- one 
on $D_h$ and the other on $D_b$. Since $K$ is prime, in each of these balls 
there is a trivial 1-tangle. The sphere $D_h \cup D_b$ bounds a trivial $2$-tangle 
whose stands run across between $D_h$ and $D_b$. Therefore, we can isotope $D_b$ 
to coincide with $D_h$. Note that this isotopy is supported in $B_L$. Thus we 
have shown that there is an isotopy supported in $B_L$ that sends $b$ to $h$. 
By postcomposing $\varphi'$ with that isotopy, we get $\varphi'(h')=h$.
\end{proof}

\begin{proposition}\label{Prop:Coeffs2Bridge}
The coefficients $a_{11}',\dots,a_{n1}'$ of $K_L'$ and $a_{11},\dots,a_{n1}$ 
of $K_L$ satisfy $a'_{i1}=a_{i1}$  for all $i=1,\dots,n$. Moreover,
$\varphi'$ is isotopic to the identity. 
\end{proposition}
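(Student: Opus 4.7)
The plan is to combine \Cref{Cor:Coeffs2Bridge} with \Cref{Lem:HorizontalBridgeSphere2Bridge}. The corollary leaves us with two cases---the sequences $(a'_{i1})$ and $(a_{i1})$ agree or are reverses of one another---and I will rule out the reversal, then show $\varphi'$ is isotopic to the identity using an argument analogous to \Cref{cor: identity in the middle}.

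To rule out the reversal, I would apply \Cref{Lem:HorizontalBridgeSphere2Bridge} to arrange, after postcomposing $\varphi'$ with an isotopy supported in $B_L$, that $\varphi'(h') = h$, while $\varphi'$ stays the identity on $S^3 \ssm B_L$ and hence on $S_L$. Then the circle $C := h \cap S_L = \varphi'(h' \cap S_L) = h' \cap S_L$ is fixed pointwise. Let $H^+$ denote the $3$-ball above $h$, $B^+ := H^+ \cap B_L$, $E := S_L \cap H^+$, and $D := h \cap B_L$. The restriction $\varphi'|_{B^+}$ is then a homeomorphism $B^+ \to B^+$ fixing $E \cup C$ pointwise and taking $(B^+, K' \cap B^+)$ to $(B^+, K \cap B^+)$; since $\varphi'$ is the identity on the remaining portion $H^+ \ssm B_L$ of the trivial tangle $(H^+, K_L \cap H^+)$, it preserves the rational-tangle slope of $(B^+, K \cap B^+)$ interpreted with its boundary data on $E$ and $D$. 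This slope encodes the first twist coefficient $a_{11}$ (together with partial data from $a_{21}$), and similarly for $K'$; by \Cref{uniqueness of continued fractions} I conclude $a_{11} = a'_{11}$. In the reversed case this gives $a_{11} = a_{n,1}$; iterating the argument with horizontal bridge spheres placed in the middle of later even-indexed twist regions then yields $a_{i1} = a_{n+1-i, 1}$ for all $i$, making the reversed and matched cases agree.

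For the remaining assertion, with coefficients matched we have $K_L = K'_L$ pointwise, so $\varphi'$ becomes a self-homeomorphism of $(S^3, K_L)$ fixing $S^3 \ssm B_L$ and each of the horizontal bridge spheres (up to isotopy in $B_L$). Following the strategy of \Cref{cor: identity in the middle}, I would split $B_L$ along these bridge spheres into rational-tangle pieces; the self-homeomorphism group of each piece relative to its boundary is generated by full twists along the tangle's compressing disk, and the constraint that $\varphi'$ fixes $S_L$ pointwise pins these twist parameters to zero, so $\varphi'$ is isotopic to the identity.

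The main obstacle is the rational-tangle slope analysis: one must carefully define the slope of $(B^+, K \cap B^+)$ so that it encodes $a_{11}$, handle the possible permutation of the two points of $K \cap D$ (which corresponds to a half-twist ambiguity), and iterate through the plat by establishing an analogue of \Cref{Lem:HorizontalBridgeSphere2Bridge} at each successive bridge sphere.
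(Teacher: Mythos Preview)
Your approach differs substantially from the paper's. You argue in two steps---first rule out the reversal in \Cref{Cor:Coeffs2Bridge} by a piecewise rational-tangle analysis, then show $\varphi'$ is the identity---whereas the paper reverses the order: it shows directly that $\varphi'$ is isotopic to the identity, and the coefficient equality follows immediately.

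The paper's argument is as follows. By \Cref{Lem:HorizontalBridgeSphere2Bridge}, $\varphi'$ preserves $h$ and hence the balls above and below it. The paper then invokes the classification of unknotting tunnels of $2$-bridge knots due to Kobayashi and Morimoto--Sakuma, together with results of Bleiler--Moriah (for knots) and Gu\'eritaud (for links), which say that a self-homeomorphism of a $2$-bridge knot or link complement is determined up to isotopy by its action on the pair of short tunnels $\tau_1$ (joining the two maxima) and $\tau_2$ (joining the two minima), including their orientations. Since $\tau_1$ can be isotoped into $h$ and $\varphi'$ is the identity there, $\varphi'$ fixes $\tau_1$ with its orientation; a single further application of the argument of \Cref{Lem:HorizontalBridgeSphere2Bridge} to a horizontal sphere through the second-from-bottom twist region gives the same for $\tau_2$. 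Hence $\varphi'$ is isotopic to the identity.

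Your rational-tangle route is plausible but, as you yourself flag, has genuine gaps: you must make the slope of $(B^+, K\cap B^+)$ well-defined despite the half-twist ambiguity coming from the possible transposition of $K\cap D$, and you must establish an analogue of \Cref{Lem:HorizontalBridgeSphere2Bridge} at \emph{every} successive even-row bridge sphere in order to iterate. The paper only needs two such spheres (near the top and near the bottom), and the tunnel machinery replaces your inductive slope-matching by a single appeal to the known symmetry group of $2$-bridge link complements. Your approach would be more self-contained if completed, but the paper's is considerably shorter and sidesteps exactly the technical obstacles you anticipate.
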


\begin{proof}
By \Cref{Lem:HorizontalBridgeSphere2Bridge}, the isotopy $\varphi'\from 
(S^3,K_L')\to (S^3,K_L)$ can be taken to be the identity on all 
of $S^3\ssm B_L$, and on the 2-bridge sphere $h'=h$. Thus it also 
must take the ball above $h'=h$ to the ball above $h'=h$, and similarly
for the ball below $h$ below. 

Kobayashi and Morimoto-Sakuma (\cite{Kobayashi}, \cite{MorimotoSakuma}) classified the 
tunnels of $2$-bridge knots and showed that a 2-bridge knot has at most six 
unknotting tunnels. Lemma 5.1 and Theorem 5.2 of \cite{MorimotoSakuma} show that
the tunnels $\tau_1$ connecting  the two maxima horizontally, and  $\tau_2$ connecting
the two minima horizontally are unique  (up to isotopy). It follows, from work of Bleiler 
and Moriah~\cite{BleilerMoriah} for the case of a knot and from work of Gueritaud 
\cite{Gueritaud2006Canonical} for the case  of a link, that an isotopy taking a 2-bridge
knot/link to itself preserves $\tau_1\cup\tau_2$ up to isotopy in $S^3\ssm K_L'$, and 
is determined up to isotopy according to whether it switches the two tunnels, and whether 
it switches their orientations.

Since $\tau_1$ can be isotoped into $h$ and since by Lemma \ref {Lem:HorizontalBridgeSphere2Bridge}
$\varphi'$ can be  assumed to be the identity on $h$,  $\varphi'$ fixes pointwise the
upper tunnel $\tau_1$ with its orientation, up to isotopy in $S^3\ssm K_L'$. A similar
argument using a horizontal sphere going through the second from the bottom twist region  
shows that $\varphi'$ also fixes the lower tunnel $\tau_2$ with its orientation, up to
isotopy in $S^3\ssm K_L'$. Thus, by \cite{BleilerMoriah, Gueritaud2006Canonical}, $\varphi'$ must be
isotopic to the identity map and therefore $a_{i1} '= a_{i1} $ for all $1\le i \le n$ as claimed.
\end{proof}

\vskip15pt

\section{Proof of the main result, applications to braids}
\label{sec:Applications}

\vskip10pt

\subsection{Proof of Theorem \ref{thm:UniquePlat}}
We restate Theorem~\ref{thm:UniquePlat} for convenience.

\begin{named}{\Cref{thm:UniquePlat}}
Let $K'$ and $K$ be two $4$-highly twisted plats 
representing the same knot or link $\mathcal{K} \subset S^3$, so that 
each diagram has width greater than or equal to $4$, and odd height greater 
than or  equal to $3$. Then $K=K'$ up to rotation in a 
vertical and/or a horizontal axis of the plats.
\end{named}

\begin{proof} 
By \Cref{cor: middle part}, the knots $K$ and $K'$ have the same height and width. 
Up to rotation in a horizontal and/or vertical axis, the region between the leftmost 
and rightmost vertical spheres $S_L$ and $S_R$ in $(S^3,K')$ is mapped by the identity
to the region between $S_L$ and $S_R$ in $(S^3,K)$. Thus up to these rotations, 
it follows that twist regions between these spheres have identical locations and 
identical number of crossings. 

By \Cref{Prop:Coeffs2Bridge}, the twist regions to the left of $S_L$ are mapped to 
twist regions to the left of $S_L$ with the same numbers of crossings, by an isotopy 
that is isotopic to the identity. An identical argument gives the same result for the 
twist regions to the right of $S_R$ and $S_R$. 

Thus the diagrams of $K'$ and $K$ are identical.
\end{proof}

\begin{corollary}\label{cor:Symmetries}
The symmetry group (consisting of self homeomorphisms up to isotopies) of  $S^3\ssm K$, 
where $K$ is a knot or link with a $4$-highly twisted plat diagram, contains at most four elements. 
\end{corollary}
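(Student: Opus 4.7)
The plan is to apply \Cref{thm:UniquePlat} with $K=K'$ being the same $4$-highly twisted plat diagram of the link. Given any self-homeomorphism $\varphi\from (S^3,K)\to (S^3,K)$ representing an element of the symmetry group, the theorem asserts that $\varphi$ must take the diagram $K$ to itself \emph{up to rotation in a vertical and/or a horizontal axis of the plat}. There are exactly four combinations of such rotations: the identity, a vertical rotation of angle $\pi$, a horizontal rotation of angle $\pi$, and the composition of both. Hence at most four isotopy classes of self-homeomorphisms can arise.

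More concretely, I would argue as follows. Fix a $4$-highly twisted plat diagram $K$ satisfying the hypotheses of \Cref{thm:UniquePlat}, and let $\varphi$ represent a nontrivial element of the symmetry group. Compose $\varphi$ with one of the four rotations $\rho$ (identity, vertical flip, horizontal flip, or both) so that, in the proof of \Cref{thm:UniquePlat}, no further rotation adjustments are needed in \Cref{assum:FlipVertical} or \Cref{Assump:FlipHorizontal}. The proof of the main theorem then produces an isotopy of $\rho\circ\varphi$ to the identity: the middle region between $S_L$ and $S_R$ is fixed by \Cref{cor: identity in the middle}, and the two end tangles are fixed (up to isotopy rel boundary) by \Cref{Prop:Coeffs2Bridge}. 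Consequently $\varphi$ is isotopic to $\rho^{-1}$, so the symmetry group is represented by the (at most four) rotations.

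The only subtlety I see is verifying that the four candidate rotations $\rho$ really do act on the link complement, i.e.\ that each preserves (or is at least isotopic to a map preserving) the knot or link $\mathcal{K}$. But this is not required for the counting argument: we need only an upper bound of $4$. The argument shows that every isotopy class of self-homeomorphism arises from at most one of these four choices of $\rho$, and if two different $\rho$'s yielded the same isotopy class then their difference would be a nontrivial rotation isotopic to the identity on $(S^3,K)$, reducing the count. So the bound of four holds unconditionally, which is exactly what \Cref{cor:Symmetries} claims.

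I expect the cleanest write-up to be short: simply invoke \Cref{thm:UniquePlat} applied to $K=K'$, enumerate the four possible rotations, and observe that each element of the symmetry group is determined (up to isotopy) by which combination of vertical and horizontal rotations is needed to bring $\varphi$ to the identity. No further technical input is required beyond the main theorem itself.
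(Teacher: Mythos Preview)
Your proposal is correct and follows essentially the same approach as the paper: both argue that after post-composing an arbitrary self-homeomorphism with one of the four rotations (identity, vertical, horizontal, or both), the proof of \Cref{thm:UniquePlat}---specifically \Cref{cor: identity in the middle} for the middle region and \Cref{Prop:Coeffs2Bridge} for the ends---shows the result is isotopic to the identity. Your additional remark that the rotations need not actually preserve $K$ for the upper bound to hold is a nice clarification not made explicit in the paper.
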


\begin{proof}
    It follows from the proof of Theorem \ref{thm:UniquePlat} that any self homeomorphism of 
    $S^3\ssm K$ followed by an isotopy of $S^3\ssm K$ can be assumed to take the set of 
    vertical spheres to itself, and a bridge sphere to itself. By post-composing with a 
    $\pi$ rotation around a horizontal axis through the middle of the plat, a vertical 
    rotation through the middle of the plat, or a composition of both (which is a 
    rotation around an axis perpendicular to the center point of the plat), one may 
    assume the spheres are preserved with their orientations, at which point it 
    follows from \Cref{cor: identity in the middle} and the proof of 
    \Cref{Prop:Coeffs2Bridge} that the homeomorphism is isotopic to the identity.
\end{proof}

It seems likely, that when the array of twist numbers has extra symmetry 
(i.e., is palindromic or has a symmetry with respect to a horizontal rotation, 
or both, the corresponding elements in the group are non trivial (that is, 
these elements are not isotopic to the identity). It is interesting to 
compare to the case of 2-bridge knots and links \cite{MillichapWorden}.

\vskip10pt

This paper focuses on highly twisted plat diagrams, but there are two 
other natural ``plat-like'' closures of a braid -- \emph{even plats} 
and \emph{doubly even plats} -- which we define next.

\begin{definition}\label{def: even and doubly even}
Let $b$ be a word in $\mathcal{X}_m$. Consider the corresponding braid diagram, 
and let $x_1,\dots,x_{2m}$, be the top endpoints, and $y_1,\dots,y_{2m}$ 
be the bottom endpoints of the strands (as in \Cref{def: plat closure}). The 
\emph{even plat closure} of $b$ is obtained  by connecting the pairs 
$\{x_1,x_2\},\dots,\{x_{2m-1},x_{2m}\}$ at the top by small unknotted disjoint 
arcs, identically to the case of the plat closure, but connecting 
the pairs $\{y_2,y_3\},\dots,\{y_{2m},y_1\}$ at the bottom by small unknotted 
disjoint arcs; see Figure~\ref{fig:even_plat}. 

\begin{figure}[ht]
    \centering
    \includegraphics[width=0.5\linewidth]{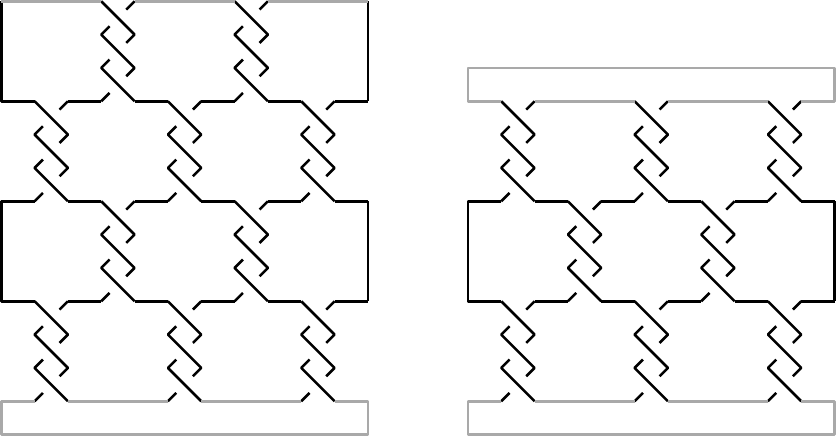}
    \caption{Even and doubly even plat closures.}
    \label{fig:even_plat}
\end{figure}

Similarly, the \emph{doubly even plat closure} of $b$ is obtained by connecting 
the pairs  $\{x_2,x_3\},\dots,\{x_{2m},x_1\}$ at the top and  
$\{y_2,y_3\},\dots,\{y_{2m},y_1\}$ at the bottom.
\end{definition}

One can similarly define the notions of \emph{standard form}, and \emph{highly twisted} 
for even and doubly even plats.

\begin{conjecture}\label{Con: Also even plats}
The claims of  \Cref{thm:UniquePlat} hold for such diagrams as well.
\end{conjecture}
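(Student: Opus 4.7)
The plan is to adapt the entire machinery developed for Theorem~\ref{thm:UniquePlat} to the even and doubly even settings, by first redefining the family of vertical spheres so that it still separates a single twist region between consecutive members of a maximal collection, and then checking that each step of the existing proof goes through with only cosmetic changes. The crucial observation is that the interior of an even or doubly even plat is combinatorially identical to an ordinary plat: the only difference is how the strands are closed off at the top and/or the bottom. So the bulk of the argument — \Cref{Thm:super incompressible}, \Cref{thm:Verticalspheres}, \Cref{removing unwanted intersections}, and the Euler characteristic bookkeeping of \Cref{sec:NoTwistRegions} — should transfer essentially verbatim, once the correct notion of a ``vertical sphere'' is set up relative to the new closure conventions.

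First I would redefine a vertical sphere $S(c_1,\dots,c_n)$ for the even case by requiring the defining monotone arc $\alpha$ to begin at a top maximum of the diagram and to end on the horizontal arc below the braid that connects the offset pairs $\{y_{2i}, y_{2i+1}\}$; this still cuts the diagram into two pieces, each meeting $K$ in the correct ``tangle'' pattern. For the doubly even case the arc runs from a bottom offset-arc to a top offset-arc. In both variants the super-incompressibility proof of \Cref{Thm:super incompressible} continues to work: double the tangle on one side, perform flypes to get a twist-reduced $3$-highly twisted diagram, and invoke~\cite{LMP} to conclude hyperbolicity of the double. The only delicate point is that one must check that the doubled diagram is still prime and $3$-highly twisted in the presence of the modified closures, which I would verify by a direct combinatorial check on the small number of possible local pictures near the outermost strands. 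Once that is established, \Cref{thm: notwistregions} and all of Section~\ref{sec:Collections} apply without change because they concern only the interior of the plat.

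The main obstacle, which is where I expect almost all of the genuine work to reside, is the analogue of Section~\ref{sec:2bridge} for the leftmost and rightmost tangles. In the ordinary plat case, the tangle on the left of the outermost vertical sphere $S_L$ naturally completes to a $2$-bridge knot/link in Schubert normal form, and one invokes Schubert's classification together with Kobayashi--Morimoto--Sakuma tunnel uniqueness. For even plats, the closure on the bottom-left now connects the leftmost and second strand to strands further right, so completing the tangle inside the ball bounded by $S_L$ does not immediately produce a $2$-bridge knot. I would try to close it by a different set of trivial arcs to still obtain a $2$-bridge knot with a $3$-highly twisted Schubert normal form; the choice of arcs must be made so that the resulting rational number determines the tangle uniquely. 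Concretely, for the even plat I would connect the bottom endpoints of the ball vertically, which recovers a Schubert-like tangle on one less crossing row, and then apply \Cref{uniqueness of continued fractions} and \Cref{Lem:HorizontalBridgeSphere2Bridge}. For the doubly even case, both top and bottom need re-closure, and one must be careful that the newly created outermost twist region (or the lack of one) still satisfies the $3$-highly twisted hypothesis required by uniqueness of continued fractions.

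Finally, once the end tangles are handled, \Cref{cor: middle part} together with the adapted end-tangle rigidity gives $K=K'$ up to the obvious symmetries. The rotational symmetries involved may differ slightly: for the doubly even plat, a further $\pi$-rotation about an axis perpendicular to the projection plane may arise naturally from the symmetry of the closure, and I would need to identify exactly which subgroup of the rotation group survives as the ambiguity in the statement. I expect the answer will be the rotations in horizontal and vertical axes as in \Cref{thm:UniquePlat}, with the doubly even case possibly picking up one extra $\mathbb{Z}/2$ symmetry coming from the top-bottom cyclic shift, and I would state the conjecture more precisely before proving it to account for these.
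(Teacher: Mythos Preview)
This statement is a \emph{conjecture} in the paper, not a theorem; the paper provides no proof of it. So there is nothing to compare your proposal against, and what you have written is not a proof but a programme for attacking an open problem.

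As a programme it is sensible in outline: you correctly observe that Sections~\ref{sec:superincompespheres} and~\ref{sec:NoTwistRegions} are essentially combinatorial statements about the layered interior of the diagram and should transfer once vertical spheres are correctly redefined, and you correctly identify the analogue of Section~\ref{sec:2bridge} as the real obstacle. However, your treatment of that obstacle is where the genuine gap lies. For an even plat the bottom closure pairs $\{y_{2m},y_1\}$, so the arc joining the leftmost and rightmost bottom endpoints runs all the way around the diagram; the tangle in the ball $B_L$ to the left of the leftmost vertical sphere is therefore not the same kind of object as in the odd case, and your suggestion to ``connect the bottom endpoints of the ball vertically'' to recover a Schubert-form $2$-bridge knot is not obviously well-defined, nor is it clear that the resulting continued fraction would still have all coefficients of absolute value at least~$3$ (which is exactly what \Cref{uniqueness of continued fractions} needs). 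Likewise, the definition of the leftmost vertical sphere itself is unclear in the even case: with the shifted bottom closure the condition ``at least one twist region on each side at each level'' interacts differently with the outermost column, and you would need to specify precisely what $S_L$ is before you can even state the analogue of \Cref{Prop:Coeffs2Bridge}.

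In short: your proposal is a reasonable sketch of why the conjecture is plausible, and it locates the difficulty accurately, but it does not resolve it. The end-tangle argument would need to be redone from scratch for the even and doubly even closures, and until that is carried out the conjecture remains open --- which is presumably why the authors left it as a conjecture.
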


\subsection{Braids}\label{sec: braids}

\Cref{thm:UniquePlat} has applications to the form of braids in the Hilden subgroup, 
which we now review. 

\begin{definition}[Hilden \cite{Hilden}]\label{def: Hilden subgroup }
The \emph{Hilden subgroup} is the subgroup of the braid group $\mathcal{B}_{2m}$ 
generated by the following elements, called \emph{Hilden moves}:

\begin{enumerate}

\item  For $i$ odd let $\bf{h}^1_i = \sigma_i$.

\item  For $i$ odd $i \neq 2m -1$ let $\bf{h}^2_i = 
\sigma_{i+1}\cdot\sigma_{i+2}\cdot\sigma_i\cdot\sigma_{i+1}$.

\item For $i$ odd $i \neq 2m -1$ let 
$\mathbf{h_i^3} = \sigma_{i+1}\cdot \sigma_i \cdot \sigma_{i+2}^{-1} \cdot \sigma_{i+1}^{-1}$.

\item For $i$ odd $i \neq 2m -1$ let 
$\mathbf{h_i^4} = \sigma_{i+1}^{-1}\cdot \sigma_i^{-1} \cdot \sigma_{i+2} \cdot \sigma_{i+1}$.

\end{enumerate} 
\end{definition}

The Hilden moves are depicted in \Cref{fig:hilden moves}. 

\begin{figure}[ht]
    \centering
    \vspace{0.3cm}
    \begin{overpic}[width=10cm]{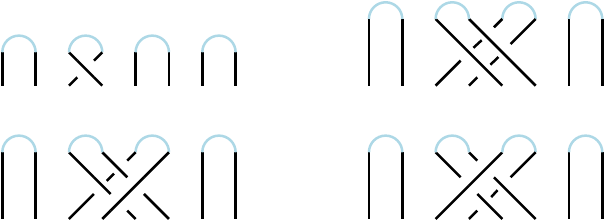}
        \put(-10,25){$\mathbf{h_i^1}:$}
        \put(11,19){$i$}
        \put(50,25){$\mathbf{h_i^2}:$}
        \put(71,19){$i$}
        \put(-10,5){$\mathbf{h_i^3}:$}
        \put(11,-3){$i$}
        \put(50,5){$\mathbf{h_i^4}:$}
        \put(71,-3){$i$}
    \end{overpic}
    \vspace{0.5cm}
    
    \caption{The Hilden moves acting on a standard set of four bridges.}
    \label{fig:hilden moves}
\end{figure}

\vskip 20pt

It is evident from \Cref{fig:hilden moves} (see also the discussion on page 476 in
\cite{Hilden}), that multiplying a braid $b$ on the right and on the left by elements 
$h_1, h_2 \in \mathcal {H}$ does not change the isotopy type of its plat closure. That is, 
plat closures of $b$ and $h_1 b h_2$ are diagrams of the same knot or link. 
Using this observation,
\Cref{thm:UniquePlat} on the uniqueness of the plat projection of a $4$-highly twisted plat
can be translated into uniqueness of $4$-highly twisted representatives of double cosets in
$\mathcal{H}\backslash \mathcal{B}_{2m} / \mathcal{H}$. 

\begin{corollary}\label{cor:HildenDoubleCosets}
If $b$ and $b'$ are two $4$-highly twisted words in $\mathcal{X}_{2m}$ with the same 
Hilden double coset $\mathcal{H}b\mathcal{H} = \mathcal{H}b'\mathcal{H}$, then $b$ 
and $b'$ are the same up to a $\pi$ rotation of their corresponding braid diagrams
around a vertical and/or a horizontal axis.  \qedsymbol
\end{corollary}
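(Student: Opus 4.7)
The plan is to reduce \Cref{cor:HildenDoubleCosets} directly to \Cref{thm:UniquePlat} by passing from braid words to their plat closures. The key input is the observation noted just before the restated corollary (from Hilden \cite{Hilden}): pre- or post-multiplication of a braid by any Hilden generator $\mathbf{h}_i^k$ does not change the isotopy type of its plat closure. Given $\mathcal{H} b \mathcal{H} = \mathcal{H} b' \mathcal{H}$, I would write $b' = h_1 \cdot b \cdot h_2$ for some $h_1, h_2 \in \mathcal{H}$, expand each $h_j$ as a product of Hilden generators, and iterate the observation to conclude that the plat closures $K$ of $b$ and $K'$ of $b'$ represent the same knot or link $\mathcal{K} \subset S^3$.

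Since $b$ and $b'$ are by hypothesis $4$-highly twisted, the diagrams $K$ and $K'$ are $4$-highly twisted plat diagrams in standard form. Under the standing assumptions (width at least $4$, odd height at least $3$), \Cref{thm:UniquePlat} applies and yields $K = K'$ as diagrams, up to a $\pi$ rotation of the plat in a vertical and/or a horizontal axis. It remains to transfer this equality back to an equality of braid words. By \Cref{def: highly twisted braid}, a standard-form word is recorded by its signed exponents $a_{i,j}$, which can be read off directly from the twist regions of the plat diagram; so the correspondence between standard-form words in $\mathcal{X}_m$ and standard-form plat diagrams of a fixed width and height is a bijection. A horizontal rotation of the plat diagram reverses the row order $b_1 \cdots b_n \mapsto b_n \cdots b_1$, and a vertical rotation replaces each $\sigma_j$ by $\sigma_{2m-j}$; both of these are obvious involutions on braid words, corresponding to $\pi$ rotations of the braid diagram around a horizontal or vertical axis respectively. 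Hence the equality $K = K'$ up to such diagrammatic rotations translates directly to the equality of $b$ and $b'$ up to the corresponding rotations of their braid diagrams, which is the statement of the corollary.

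The main obstacle here is almost entirely bookkeeping: \Cref{thm:UniquePlat} does all the topological work, and the only content to supply is the explicit bijection between standard-form words and standard-form plat diagrams, together with the matching of the two diagrammatic symmetries to the two obvious involutions on braid words. The only substantive hypothesis one must double-check is that the width and odd-height bounds of \Cref{thm:UniquePlat} are inherited from the hypotheses on $b, b'$; once that is in place, no further geometric input beyond \Cref{thm:UniquePlat} and the Hilden observation is required.
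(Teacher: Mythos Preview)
Your proposal is correct and follows exactly the approach the paper intends: the paper's proof is just the paragraph preceding the corollary (Hilden moves preserve the isotopy type of the plat closure, so equal double cosets give isotopic plats, and then \Cref{thm:UniquePlat} forces the diagrams to agree up to the two rotations) together with the \qedsymbol. You have simply made explicit the bookkeeping---the bijection between standard-form words and standard-form plat diagrams, and the matching of the two diagrammatic rotations with the two braid-word involutions---that the paper leaves to the reader.
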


\bibliographystyle{amsplain}
\bibliography{references.bib}

\end{document}